\documentclass[11pt, letterpaper]{article}

\usepackage{geometry}
\usepackage{color}
\usepackage{verbatim}
\geometry{letterpaper}

\usepackage[utf8x]{inputenc}
\usepackage{t1enc}
\usepackage[english]{babel}

\usepackage{amsmath,amssymb,amsthm}
\usepackage{mathrsfs,esint}
\usepackage{graphicx,wrapfig}
\usepackage[format=hang]{caption}

\newcommand*{\R}{{\mathbb R}}

\newcommand*{\N}{{\mathbb N}}
\newcommand*{\Z}{{\mathbb Z}}

\newcommand*{\eps}{\varepsilon}

\newcommand*{\Om}{\Omega}
\newcommand*{\calE}{\mathcal{E}}
\newcommand*{\calA}{\mathcal{A}}

\newcommand*{\pip}{\varphi}

\providecommand*{\vint}[1]{\mathchoice
          {\mathop{\vrule width 5pt height 3 pt depth -2.5pt
                  \kern -9pt \kern 1pt\intop}\nolimits_{\kern -5pt{#1}}}
          {\mathop{\vrule width 5pt height 3 pt depth -2.6pt
                  \kern -6pt \intop}\nolimits_{\kern -3pt{#1}}}
          {\mathop{\vrule width 5pt height 3 pt depth -2.6pt
                  \kern -6pt \intop}\nolimits_{\kern -3pt{#1}}}
          {\mathop{\vrule width 5pt height 3 pt depth -2.6pt
                  \kern -6pt \intop}\nolimits_{\kern -3pt{#1}}}}
\newcommand*{\jint}{\fint}

\DeclareMathOperator{\dist}{dist}
\DeclareMathOperator{\diam}{diam}
\DeclareMathOperator{\rad}{rad}

\DeclareMathOperator{\Mod}{Mod}

\numberwithin{equation}{section}
\theoremstyle{plain}
\newtheorem{thm}[equation]{Theorem}

\newtheorem{prop}[equation]{Proposition}

\newtheorem{lem}[equation]{Lemma}

\theoremstyle{definition}

\newtheorem{defn}[equation]{Definition}

\newtheorem{remark}[equation]{Remark}

\begin{document}

\title{Neumann problems for $p$-harmonic functions, and induced nonlocal operators in metric measure spaces}
\author{Luca Capogna, Josh Kline, Riikka Korte,\\ Nageswari Shanmugalingam, Marie Snipes
\thanks{The research of N.S.~and J.K.~are partially funded by the NSF grants~\#DMS-1800161 and~\#DMS-2054960.
The research of M.S.~is partially funded by a supplemental grant from NSF~\#DMS-1800161.
The research of L.C.~is partly funded by NSF~\#DMS-195599.  R.K.  was supported by Academy of Finland, project 308063.
The authors thank Mathav Murugan and Zhen-Qing Chen for illuminating discussion on jump processes and for
pointing out the
references~\cite{CK1, CK2, CKW} to us when N.S. and they were visiting at MSRI in Spring 2022, and Yannick Sire for
pointing out the reference~\cite{CSt}.}}
\maketitle

\begin{abstract}
Following ideas of Caffarelli and Silvestre  in~\cite{CS}, and using recent progress in hyperbolic fillings, we define
fractional $p$-Laplacians
$(-\Delta_p)^\theta$ with $0<\theta<1$ on any compact, doubling metric measure space  $(Z,d,\nu)$, and  prove existence,
regularity and stability for the non-homogenous non-local equation
$(-\Delta_p)^\theta u =f.$
These results, in turn, rest on the new
 existence, global H\"older regularity and stability theorems that we prove for the Neumann problem for $p$-Laplacians $\Delta_p$,
$1<p<\infty$, in bounded domains of  measure metric spaces endowed with a doubling measure that supports a Poincar\'e
inequality.  Our work also 
includes as special cases much of the previous results
by other authors in the Euclidean,
Riemannian and Carnot group settings. Unlike other recent contributions in the metric measure spaces context, our work
does not rely on the assumption that  $(Z,d,\nu)$  supports a Poincar\'e inequality.
\end{abstract}
\tableofcontents
\section{Introduction}

The goal of this paper is to
construct and study a notion of the fractional $p$-Laplacian in the setting of compact doubling metric measure spaces, by extending
Caffarelli and Silvestre's approach in~\cite{CS}. 
Our strategy consists of two main steps: First, we continue
the study begun in~\cite{MS}, of the Neumann boundary value problem for the
$p$-Laplacian, $1<p<\infty$,
\begin{align*}
\Delta_p u:=\text{div}(|\nabla u|^{p-2}\nabla u)=0 &\text{ in }\Om,\\
|\nabla u|^{p-2}\partial_\eta u=f &\text{ on }\partial\Om,
\end{align*}
expressed in the weak form
\[
\int_\Om|\nabla u|^{p-2}\nabla u\cdot\nabla\phi\, d\mu=\int_{\partial\Om}\phi f\, d\nu
 \quad \text{ for every Sobolev function } \phi,\]
where $\Om$ is an open domain in a metric measure space $(X,d,\mu)$ equipped with a doubling measure supporting a Poincar\'e
inequality. In this general setting  $\nabla u$ indicates either the gradient in a Cheeger differential structure (see ~\cite{Che}) or
in an infinitesimally Hilbertian structure  in the sense of Gigli~\cite{AGS1, AGS2,G, GM},  and hence provides
a unified framework for a wide class of operators.  We prove global H\"older
regularity and stability results with respect to the Neumann data in suitable Lebesgue classes.

Second,  using our new results for the Neumann problem, we provide a construction of a family of fractional
$p$-Laplacian operators on the boundary of the domain.
In fact, thanks to Theorem~\ref{thm:equiv-intro},
we can construct an analog of the fractional Laplacian for
{\it any} compact doubling metric measure space $(Z,d_Z,\nu)$ even if it does not support a $2$-Poincar\'e inequality.
We do this 
by constructing a locally compact but
non-compact doubling metric measure space $(X,d,\mu)$ that supports a $1$-Poincar\'e inequality such that
$Z$ is biLipschitz equivalent to $\partial X=\overline{X}\setminus X$, and considering solutions to the corresponding Neumann
boundary value problem in $X$. Such a construction of a metric measure space $X$ as a uniformization of a hyperbolic filling of $Z$
can be found in~\cite{BBS}. We will discuss this in detail in Section~\ref{sub:hyp-fill} below.
As a consequence, we prove
existence, regularity, Harnack inequality and stability for  solutions of the associated non-homogenous equation
 \[
(-\Delta_p)^\theta v=f \text{ in }Z,
\]
 with $\theta\in (0,1)$. We  also prove that for $p=2$, the fractional operators defined here are the same as those appearing
 in the literature,  so that our results extend  (and occasionally sharpen) earlier work by other authors in the setting of Euclidean
 spaces \cite{CS, ST}, Riemannian manifolds \cite{BGS},  Carnot groups \cite{FF}, and even in metric measure spaces \cite{EbGKSS}.

We remark explicitly that even in the special case $p=2$, part of the novelty of our approach is that it
allows one to define and study fractional Laplacians in metric measure spaces that
support neither a Poincar\'e inequality nor a corresponding regular Dirichlet form, such as the
Von Koch snowflake $K$, or the Rickman rug $[0,1]\times K$.

Next, we proceed by outlining the relevant assumptions of the paper, before stating our main results.

\noindent{\bf Structure  hypotheses:}
Throughout the paper we let $1<p<\infty$, and $\Omega$ be an  open, connected domain in
a complete metric measure space $(X,d,\mu)$ such that:
\begin{enumerate}
\item[(H0)] $\Omega$ is a John domain as defined in Subsection~\ref{subsect:John}.
\item[(H1)] $(\overline \Om, d, \mu\vert_{\overline \Om})$ is doubling and supports a $p$-Poincar\'e inequality
as in Subsection~\ref{subsect:doubling} and Subsection~\ref{subsect:2.2}.
\item[(H2)]
The boundary $\partial \Omega$ is equipped with a Radon measure $\nu$ for which there are constants $C\ge 1$
and $0<\Theta<p$ such that for all $x\in Z$
and $0<r<2\diam(\partial \Omega)$,
\begin{equation}\label{eq:Co-Dim-intro}
 \frac{1}{C}\, \frac{\mu(B(x,r)\cap\Om)}{r^\Theta}\le \nu(B(x,r))\le C\, \frac{\mu(B(x,r)\cap\Om)}{r^\Theta};
\end{equation}
that is, $\nu$ is a $\Theta$-codimensional Hausdorff measure with respect to $\mu\lfloor_\Om$.
\end{enumerate}

\begin{remark}
The constants associated with $p$, the John domain condition, and
the above two conditions will be referred to as the structural constants.

Recall that John domains are necessarily bounded domains.
 Condition~(H0) can be waived
for Theorem~\ref{thm:equiv-intro} and Theorem~\ref{thm:holder-intro}.
The boundedness of $\Om$ is used by us only to know
that solutions to the Neumann boundary value problem exist (see~\cite{MS}), and
Condition~(H0) is used only to know the existence of traces of Sobolev functions on
$\Om$ and ensure that the trace lies in the suitable Besov class of functions on the boundary of $\Om$.
For unbounded domains, John condition
 can be replaced with the assumption that $\Om$ is a uniform domain in order to obtain local trace estimates
 as in~\cite{M}, and this is sufficient for Theorem~\ref{thm:equiv-intro} and Theorem~\ref{thm:holder-intro} in the
 event that $\Om$ is not bounded. Note that~\cite{CS, FF} establish neither existence nor stability results for the
 fractional Laplacian, but focus only on local regularity such as H\"older continuity and Harnack inequality, under the
 assumption that the solution to the fractional Laplacian problem exists. Hence our results recover those of~\cite{CS, FF}
 in the case that $\partial\Om$ is a Euclidean space or a Carnot group.
\end{remark}

\begin{remark}
Our choice of $0<\Theta<p$ in~(H2) corresponds to the choice of $a$ in~\cite{CS}. In~\cite{CS}, for each $0<\theta<1$
the choice $-1<a=1-2\theta<1$ is made in considering the weight $y^a$ imposed on the domain $\Om=Z\times(0,\infty)$,
and in this paper, we need $\Theta=2(1-\theta)$ (with $p=2$) correspondingly.
\end{remark}

\begin{remark}
The structure hypotheses are more general than those assumed in the literature so far.
In fact,  domains with fractal boundaries such as in the
vonKoch snowflake domain satisfy these conditions for some $\Theta\ne 1$.
Moreover, such a flexibility allows us to consider all powers $\theta\in (0,1)$ rather than just $\theta=1/2$.
These structural assumptions are satisfied
by the contexts studied in~\cite{CS, FF,ST}, where $\Om=X\times(0,\infty)$ with $X$ either a Euclidean space or a Carnot
group, and $\Om$ equipped with a weighted product measure.
Moreover, \emph{every} compact doubling metric measure space $X$ is the boundary of a John domain (and in
fact, the boundary of a uniform domain) satisfying our structural conditions, see~\cite{BBS}.
\end{remark}

The first main theorem of this paper, stated next, is the main tool we use to construct the fractional operator on the metric space
$(\partial\Om,d,\nu)$ by considering analogs of the Dirichlet-to-Neumann transformation. Condition~(c) of this theorem tells us
that the trace of the solution $u$ on $\partial\Om$ should belong to the domain of the fractional operator.

\begin{thm}\label{thm:equiv-intro}
Suppose that conditions~(H0),~(H1) and~(H2) hold for the metric measure space $(X,d,\mu)$ and $\Om$.
Fix $1<p<\infty$, and suppose that
$f:\partial\Omega\to\R$ be in the class $L^{p'}(\partial\Om)$ where $p'$ is the H\"older
conjugate of $p$ with $\int_{\partial\Omega}f\,d\nu =0$.
Let $u\in N^{1,p}(\Omega)=N^{1,p}(\overline{\Omega})$. Then
the following are equivalent.
\begin{enumerate}
\item[\rm{(a)}] $u$ is a solution to the Neumann boundary value problem with data
  $f$ in the domain $\Omega\subset X$; that is, for all $\phi\in N^{1,p}(\overline{\Omega})$,
      \[
      \int_{\Omega}|\nabla u|^{p-2}\nabla u\cdot\nabla\phi\,d\mu=\int_{\partial\Omega}\phi f\,d\nu
      \]
  \item[\rm{(b)}] $u$ minimizes the energy functional
        \[
        I(v):=\int_{\overline{\Omega}}|\nabla v|^{p}\,d\mu-p\int_{\partial{\Omega}}v f\,d\nu
        \]
        among all functions $v\in N^{1,p}(\overline{\Omega})$.  Here we extend $\nu$ to a measure
        on the closure $\overline{\Omega}$ by zero outside of $\partial\Omega$.
  \item[\rm{(c)}] $u$ is $p$-harmonic in $\Omega$ and
      \[
      |\nabla u|^{p-2}\nabla u\cdot\nabla\eta_\epsilon\,d\mu\rightharpoonup -f\,d\nu.
      \]
      Here the convergence is that of weak convergence of signed Radon measures on $\overline{\Om}$,
      and the function $\eta_\epsilon:\overline{\Omega}\to\R$ is given\footnote{From our structural assumption~(H2), we have that
$\int_{B(\zeta,r)}|\nabla \eta_\eps|\, d\mu$ is at most a constant multiple of $\eps^{-1+\Theta}\nu(B(\zeta,2r))$ whenever
$\zeta\in\partial\Om$ and $r>0$.} by
$\eta_\eps(x)=\min\{1, \textrm{dist}(x, \overline{\Om}\setminus\Omega)/\eps\}$, and again we extend $\nu$ to a measure
      on the closure $\overline{\Omega}$ by zero outside of $\partial\Omega$.
\end{enumerate}
\end{thm}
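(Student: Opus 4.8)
The plan is to prove the chain of equivalences (a) $\Leftrightarrow$ (b) and (a) $\Leftrightarrow$ (c), both of which are fairly standard once the right test-function argument is set up. For (a) $\Leftrightarrow$ (b), I would argue via the convexity of the energy functional $I$. The map $v \mapsto \int_{\overline\Omega}|\nabla v|^p\,d\mu$ is convex and $C^1$ along lines in $N^{1,p}(\overline\Omega)$ (using the measurability and locality of the minimal $p$-weak upper gradient, together with the fact that $|\nabla(u+t\phi)|^2 = |\nabla u|^2 + 2t\,\nabla u\cdot\nabla\phi + t^2|\nabla\phi|^2$ in the Cheeger/infinitesimally Hilbertian structure), while $v\mapsto p\int_{\partial\Omega}vf\,d\nu$ is linear and bounded on $N^{1,p}(\overline\Omega)$ — here one must invoke the trace theorem for John domains (Condition~(H0)) together with (H2) to see that $v\mapsto v|_{\partial\Omega}$ maps $N^{1,p}(\overline\Omega)$ boundedly into $L^{p'}(\partial\Omega)'$-paired spaces, so the linear term is finite and continuous. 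Computing $\frac{d}{dt}\big|_{t=0} I(u+t\phi)$ then yields exactly $p\big(\int_\Omega |\nabla u|^{p-2}\nabla u\cdot\nabla\phi\,d\mu - \int_{\partial\Omega}\phi f\,d\nu\big)$, and by convexity, $u$ is a minimizer iff this vanishes for all $\phi$, which is precisely~(a).

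For (a) $\Rightarrow$ (c), first note that testing (a) against $\phi\in N^{1,p}_0(\Omega)$ (which has zero trace, by the definition of the Newtonian Sobolev space with zero boundary values and the trace theory) makes the right-hand side vanish, so $u$ is $p$-harmonic in $\Omega$. For the measure-convergence statement, fix $\psi\in \mathrm{Lip}(\overline\Omega)$ and consider the test function $\phi = \psi\,\eta_\epsilon \in N^{1,p}(\overline\Omega)$; since $\eta_\epsilon \equiv 0$ on $\partial\Omega$ we get
\[
\int_\Omega |\nabla u|^{p-2}\nabla u\cdot\nabla(\psi\eta_\epsilon)\,d\mu = \int_{\partial\Omega}\psi\eta_\epsilon f\,d\nu = 0.
\]
Expanding via the Leibniz rule, $\nabla(\psi\eta_\epsilon) = \eta_\epsilon\nabla\psi + \psi\nabla\eta_\epsilon$, so
\[
\int_\Omega \psi\, |\nabla u|^{p-2}\nabla u\cdot\nabla\eta_\epsilon\,d\mu = -\int_\Omega \eta_\epsilon\,|\nabla u|^{p-2}\nabla u\cdot\nabla\psi\,d\mu.
\]
Now I would instead test the \emph{original} equation~(a) against $\phi = \psi(1-\eta_\epsilon)$, whose trace on $\partial\Omega$ is $\psi$, giving
\[
\int_\Omega |\nabla u|^{p-2}\nabla u\cdot\nabla\big(\psi(1-\eta_\epsilon)\big)\,d\mu = \int_{\partial\Omega}\psi f\,d\nu,
\]
and expanding and comparing with the previous identity yields
\[
\int_\Omega \psi\,|\nabla u|^{p-2}\nabla u\cdot\nabla\eta_\epsilon\,d\mu = -\int_{\partial\Omega}\psi f\,d\nu + \int_\Omega (1-\eta_\epsilon)\,|\nabla u|^{p-2}\nabla u\cdot\nabla\psi\,d\mu.
\]
As $\epsilon\to 0$, $1-\eta_\epsilon \to 0$ pointwise on $\Omega$ and stays bounded by $1$, and $|\nabla u|^{p-1}|\nabla\psi|\in L^1(\Omega)$ since $u\in N^{1,p}$ and $\psi$ is Lipschitz, so the last integral tends to $0$ by dominated convergence. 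Hence $\int_{\overline\Omega}\psi\,d\big(|\nabla u|^{p-2}\nabla u\cdot\nabla\eta_\epsilon\,\mu\big) \to -\int_{\partial\Omega}\psi\,d\nu = \int_{\overline\Omega}\psi\,d(-f\nu)$ for every $\psi\in\mathrm{Lip}(\overline\Omega)$; testing boundedness of the total variations of these signed measures uniformly in $\epsilon$ (via the footnote estimate $\int_{B(\zeta,r)}|\nabla\eta_\epsilon|\,d\mu \lesssim \epsilon^{-1+\Theta}\nu(B(\zeta,2r))$, a covering argument on a neighborhood of $\partial\Omega$, and $|\nabla u|^{p-1}\in L^{p'}$) upgrades this to weak-$*$ convergence of signed Radon measures on the compact space $\overline\Omega$.

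For (c) $\Rightarrow$ (a): given any $\phi\in N^{1,p}(\overline\Omega)$, approximate $\phi$ in $N^{1,p}$-norm by Lipschitz functions $\psi_j$ (density of Lipschitz functions in $N^{1,p}(\overline\Omega)$ under (H1)), write $\phi = \phi\eta_\epsilon + \phi(1-\eta_\epsilon)$; the $p$-harmonicity of $u$ kills $\int_\Omega |\nabla u|^{p-2}\nabla u\cdot\nabla(\phi(1-\eta_\epsilon))\,d\mu$ in the limit $\epsilon\to0$ modulo the same dominated-convergence error as above, while $\int_\Omega |\nabla u|^{p-2}\nabla u\cdot\nabla(\phi\eta_\epsilon)\,d\mu$ splits as $\int_\Omega \phi\,|\nabla u|^{p-2}\nabla u\cdot\nabla\eta_\epsilon\,d\mu + \int_\Omega \eta_\epsilon |\nabla u|^{p-2}\nabla u\cdot\nabla\phi\,d\mu$; the second term vanishes as $\epsilon\to0$, and the first converges to $-\int_{\partial\Omega}\phi\,d\nu$ by the assumed weak measure convergence applied to $\psi=\phi$ (after the Lipschitz approximation, plus an argument that the boundary traces converge correspondingly). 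Summing gives exactly the identity in~(a). The main obstacle is the last step of (a) $\Rightarrow$ (c): promoting the convergence of the pairings against Lipschitz functions to genuine weak-$*$ convergence of signed Radon measures, which requires the uniform bound on the total variation measures $\big||\nabla u|^{p-2}\nabla u\cdot\nabla\eta_\epsilon\big|\,\mu$ near $\partial\Omega$; this is where the codimension condition~(H2) and the quantitative gradient estimate for $\eta_\epsilon$ in the footnote are essential, and care is needed to handle the interaction between the shrinking collar $\{0<\mathrm{dist}(\cdot,\overline\Omega\setminus\Omega)<\epsilon\}$ and the trace behavior of $u$ there.
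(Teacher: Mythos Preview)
Your overall strategy matches the paper's proof exactly: (a)~$\Leftrightarrow$~(b) via convexity and first variation, and (a)~$\Leftrightarrow$~(c) via the cutoff $\eta_\eps$, Leibniz, and dominated convergence. The (a)~$\Leftrightarrow$~(b) and (a)~$\Rightarrow$~(c) portions are correct and essentially identical to the paper's argument; your extra care about upgrading convergence against Lipschitz test functions to genuine weak-$*$ convergence of signed measures is a point the paper glosses over (it simply writes ``the claim~(c) follows''), and the footnote estimate you cite is indeed the natural tool for the required uniform total-variation bound.

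There is, however, a genuine slip in your (c)~$\Rightarrow$~(a) step: you have the roles of $\eta_\eps$ and $1-\eta_\eps$ reversed. Recall that $\eta_\eps=0$ on $\partial\Omega$ and $\eta_\eps\to 1$ pointwise on $\Omega$ as $\eps\to0$. Hence it is $\phi\,\eta_\eps$ (not $\phi(1-\eta_\eps)$) that lies in $N^{1,p}_0(\Omega)$ and is annihilated by the $p$-harmonicity of $u$, giving $\int_\Omega|\nabla u|^{p-2}\nabla u\cdot\nabla(\phi\eta_\eps)\,d\mu=0$ \emph{for each fixed $\eps$}, not merely in the limit. Conversely, $\int_\Omega\eta_\eps\,|\nabla u|^{p-2}\nabla u\cdot\nabla\phi\,d\mu$ does \emph{not} vanish as $\eps\to0$; by dominated convergence it tends to $\int_\Omega|\nabla u|^{p-2}\nabla u\cdot\nabla\phi\,d\mu$, which is precisely the left-hand side of~(a) that you are trying to compute. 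The correct bookkeeping (as in the paper) is: from $\int_\Omega|\nabla u|^{p-2}\nabla u\cdot\nabla(\phi\eta_\eps)\,d\mu=0$ one obtains
\[
\int_\Omega|\nabla u|^{p-2}\nabla u\cdot\nabla\phi\,d\mu
=\int_\Omega(1-\eta_\eps)|\nabla u|^{p-2}\nabla u\cdot\nabla\phi\,d\mu
-\int_\Omega\phi\,|\nabla u|^{p-2}\nabla u\cdot\nabla\eta_\eps\,d\mu,
\]
and now the first term on the right tends to $0$ while the second tends to $\int_{\partial\Omega}\phi f\,d\nu$ by~(c). (You also dropped an $f$ in ``$-\int_{\partial\Omega}\phi\,d\nu$''.) Once this swap is corrected, your argument coincides with the paper's.
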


The proof of Theorem \ref{thm:equiv-intro} is in Section~\ref{Sect:equiv-form}.

Next we turn our attention to global properties of solutions of the Neumann boundary value problem,
namely we obtain H\"older regularity up to the boundary, and stability with respect to the data. Here, the index $Q$ is the lower mass
bound exponent associated with the measure $\mu$ as in~\eqref{eq:lower-mass-exp}. Notice that we can increase the
value of $Q$ in~\eqref{eq:lower-mass-exp}, and so, if we are willing to pay the price of changing the estimates in the following
theorem, the restriction $p\le Q$ should not be considered to be a restrictive one.

\begin{thm}\label{thm:holder-intro}
Assume that~(H0),~ (H1) and (H2) hold, $1<p\le Q$, and let $u$ be a solution of the
Neumann problem as in Theorem~\ref{thm:equiv-intro},  for the boundary data $f\in L^q(\partial \Omega, \nu)$, with
$\int_{\partial\Om}f\, d\nu=0$.  If
\[
\tfrac{Q-\Theta}{p-\Theta}<q,
\]
then $u$ is $(1-\eps)$-H\"{o}lder continuous in $\overline \Omega$ with
\[
\eps=\max\left\{1-\alpha,\tfrac{\Theta(q-1)+Q}{pq}\right\},
\]
and where $\alpha$ is as in Proposition \ref{prop:KS}. Furthermore, there is a constant
$C>1$ such that if $u\ge 0$ on $\Om$ and
$W\subset\partial\Om$ is a non-empty relatively open subset of $\partial\Om$ with $f=0$ on $W$, then
whenever $x\in\Om\cup W$ and $r>0$ such that $B(x,2r)\cap\overline{\Om}\subset W\cup\Om$, we have
the Harnack inequality
\[
 \sup_{B(x,r)\cap\overline{\Om}}\, u\le C\, \inf_{B(x,r)\cap\overline{\Om}}\, u.
\]
\end{thm}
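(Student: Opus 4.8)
The statement splits into a global regularity claim and a Harnack-type claim, and in both I would reduce to the interior theory for $p$-harmonic functions (Proposition~\ref{prop:KS}), doing the genuinely new work near $\partial\Om$.

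\emph{Hölder continuity.} At interior points $u$ is $p$-harmonic, so Proposition~\ref{prop:KS} already gives local $\alpha$-Hölder continuity and the companion $p$-energy decay on balls $B$ with $2B\ssub\Om$, at a rate better than needed since $\alpha\ge 1-\eps$ by definition of $\eps$. The crux is the corresponding estimate at $\zeta\in\partial\Om$: I would prove the Morrey bound
\[
\int_{B(\zeta,r)\cap\Om}|\nabla u|^p\,d\mu\le C\,r^{-p\eps}\,\mu\bigl(B(\zeta,r)\cap\Om\bigr),\qquad 0<r<r_0,
\]
and then, via the familiar dichotomy comparing $\dist(x,\partial\Om)$ with $r$, upgrade this to a Morrey estimate at every $x\in\overline\Om$. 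Since $(\overline\Om,d,\mu)$ is complete, doubling and supports a $p$-Poincar\'e inequality by~(H1), with lower mass bound exponent $Q$ as in~\eqref{eq:lower-mass-exp}, the metric Morrey--Campanato characterisation of H\"older continuity then yields a local $(1-\eps)$-H\"older estimate for $u$ on $\overline\Om$, and a covering/chaining argument (using boundedness of $\overline\Om$) promotes it to the global statement.

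\emph{The boundary Morrey estimate.} I would obtain the displayed bound by a De~Giorgi/Moser iteration carried up to $\partial\Om$. Testing the weak Neumann identity of Theorem~\ref{thm:equiv-intro}(a) (equivalently, using minimality in Theorem~\ref{thm:equiv-intro}(b)) with $\phi=(u-k)_{\pm}\psi^p$ for a Lipschitz cutoff $\psi$ between concentric balls around $\zeta$ gives a Caccioppoli inequality whose only non-standard term is the boundary integral
\[
\int_{\partial\Om\cap B(\zeta,r)}(u-k)_{\pm}\,\psi^p\,|f|\,d\nu.
\]
I would control this by combining H\"older's inequality (using $f\in L^q$), the codimension relation~\eqref{eq:Co-Dim-intro} between $\nu$ and $\mu\lfloor_\Om$, and a Sobolev/trace inequality on $\partial\Om$ for $N^{1,p}(\Om)$-functions --- available from~(H0) (or the uniform-domain substitute) together with~(H1) and the mass bound~\eqref{eq:lower-mass-exp}, where $p\le Q$ is used --- exploiting that the De~Giorgi truncations are supported on super-level sets of controlled $\mu$-measure. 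The hypothesis $q>\tfrac{Q-\Theta}{p-\Theta}$ is exactly the condition under which this term is of strictly lower order, and a careful accounting of the powers of $r$ it introduces produces the loss $\tfrac{\Theta(q-1)+Q}{pq}$; absorbing the good part of the gradient energy and iterating over dyadic annuli then yields oscillation/energy decay with exponent $1-\max\{1-\alpha,\tfrac{\Theta(q-1)+Q}{pq}\}=1-\eps$. I expect this step --- pinning down the sharp $\Theta$-codimensional trace inequality and tracking the $r$-powers so that $\eps$ comes out exactly as stated rather than merely positive --- to be the principal technical obstacle; the rest is a routine transcription of De~Giorgi's scheme into the metric, Neumann setting.

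\emph{The Harnack inequality.} Here I would use that $f=0$ on $W$ to turn $W$ into an interior set. If $B(x,2r)\cap\overline\Om\subset W\cup\Om$ and $\phi\in N^{1,p}(\overline\Om)$ is supported in $B(x,2r)\cap\overline\Om$, then $\phi$ vanishes near $\partial\Om\setminus W$ while $f$ vanishes on $W$, so $\int_{\partial\Om}\phi f\,d\nu=0$; Theorem~\ref{thm:equiv-intro}(a) then gives $\int_{\overline\Om}|\nabla u|^{p-2}\nabla u\cdot\nabla\phi\,d\mu=0$ for all such $\phi$. Thus $u$ is $p$-harmonic in the relatively open set $U:=W\cup\Om$ of the complete, doubling, $p$-Poincar\'e space $(\overline\Om,d,\mu\vert_{\overline\Om})$ furnished by~(H1); in particular $u$ has a continuous representative on $U$ (Proposition~\ref{prop:KS}), and since $\Om$ is dense in $U$ and $u\ge 0$ on $\Om$ we get $u\ge 0$ on $U$. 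Applying, on $\overline\Om$, the Harnack inequality for nonnegative $p$-harmonic functions on complete doubling $p$-Poincar\'e spaces then yields $\sup_{B(x,r)\cap\overline\Om}u\le C\inf_{B(x,r)\cap\overline\Om}u$ with $C$ depending only on $p$ and the doubling and Poincar\'e constants of $\mu\vert_{\overline\Om}$, i.e.\ on the structural constants.
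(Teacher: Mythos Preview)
Your Harnack argument is essentially the paper's: one observes that $u$ is Cheeger $p$-harmonic in the relatively open set $\Om\cup W\subset\overline\Om$, invokes the strong maximum principle, and applies the Harnack inequality from~\cite{KS}.

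For the H\"older regularity, however, your route differs substantially from the paper's. You propose a direct De~Giorgi iteration on $u$ itself, handling the boundary term $\int_{\partial\Om\cap B}(u-k)_\pm\psi^p|f|\,d\nu$ in the Caccioppoli inequality via a scaled trace/Sobolev inequality and then iterating over annuli. The paper instead runs a \emph{comparison} (Campanato-type) argument: on each ball $B=B(x,r)$, viewed in the ambient space $\overline\Om$, it introduces the $p$-harmonic replacement $v$ solving the Dirichlet problem on $B$ with boundary data $u$ on $\partial B$, and splits
\[
\int_{\tfrac14 B}|\nabla u|^p\,d\mu\lesssim \int_{B}\bigl(|\nabla u|^{p-2}\nabla u-|\nabla v|^{p-2}\nabla v\bigr)\cdot\nabla(u-v)\,d\mu+\int_{\tfrac14 B}|\nabla v|^p\,d\mu.
\]
The $v$-term is bounded by the De~Giorgi-class Caccioppoli inequality for $v$ together with the interior $\alpha$-H\"older estimate of Proposition~\ref{prop:KS} (applied to $v$ in $B\subset\overline\Om$; this is precisely where $1-\alpha$ enters). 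The cross term equals $\int_{B}(u-v)f\,d\nu$ by the weak formulations of the two problems, and is then estimated \emph{pointwise} using the a~priori bound $M=\sup_\Om|u|$ (and the maximum principle for $v$) as $2M\int_B|f|\,d\nu$; H\"older with exponent $q$ and the codimension relation~\eqref{eq:Co-Dim-intro} produce the loss $\tfrac{\Theta(q-1)+Q}{pq}$. A telescoping chain of balls and Poincar\'e then give the H\"older bound.

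The comparison approach buys a clean separation of the two decay mechanisms---$\alpha$-regularity of the harmonic replacement versus the $L^q$-scaling of the data---so that the claimed $\eps$ drops out of a single two-term Morrey estimate (Proposition~\ref{thm:bounds}); the price is that it leans on the $L^\infty$ bound for $u$ established separately (in~\cite{MS} and the appendix). Your direct-iteration route avoids the comparison function and does not explicitly use $\sup_\Om|u|$, but pushes the work into the sharp trace inequality and the bookkeeping of $r$-powers through the level-set iteration---exactly the step you flag as the principal obstacle. Both strategies are viable; the paper's is shorter given the machinery of~\cite{KS} and~\cite{MS} already in hand, and makes the appearance of the specific constant $\alpha$ from Proposition~\ref{prop:KS} more transparent.
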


This theorem will be proved in Section~\ref{Sect:boundary-reg}, but an explanation regarding the proof is warranted here.
Even in the Euclidean setting, the perspective of metric spaces gives a new viewpoint of the Neumann problem; we can see the domain $\Omega$ as an open subset of an ambient metric measure space while also viewing
$\overline{\Omega}$ as a metric measure space in its own right. This allows us to see the solutions for Neumann boundary
value problem also as solutions to the inhomogeneous problem $-\Delta_p u=\nu_f$ on the metric measure space
$\overline{\Omega}$, where the measure $\nu_f$ is a singular measure, supported on the boundary
$\partial\Omega=\overline{\Omega}\setminus\Omega$, and is associated with the Neumann data of the original Neumann
boundary value problem. This point of view allows us to adapt a Morrey-Campanato type argument on the whole ``open set''
that is $\overline{\Omega}$.
To do so, we took inspiration from~\cite{RZ}, but as we rely on the results
from~\cite{MS} and~\cite{KS}, our proof is more direct.
The idea is to prove a version of the Morrey inequality for the metric measure
space setting. Note that with our assumption on $\partial\Omega$, we have that $\mu(\partial\Omega)=0$.
For $p=2$, the argument in the proof of H\"older continuity on regions of $\partial\Om$ where $f=0$, in the
Euclidean, Riemann manifold setting, and
Carnot setting
as established by~\cite{BGS, CS, CG, FF, ST} is based on a Harnack inequality, from which the H\"older
continuity  follows. Here  we give a more direct proof of the theorem, and in doing so, we obtain H\"older continuity even in regions of
the boundary where the Neumann data $f$ does not vanish.  H\"older regularity results  for the
non-homogeneous equation were obtained
for bounded Euclidean domains in~\cite{CSt}, and it is interesting to note that the limitations placed on the data $f$ in~\cite{CSt}
is also the limitation in the coarser setting of nonsmooth metric spaces. We also point out that, unlike the above-mentioned papers,
our discussion also includes the nonlinear setting $p\neq 2$ of the (fractional) $p$-Laplacian, and that the
Harnack inequality for the homogenous fractional PDE   follows by virtue of the global H\"older regularity of the
Neumann problem and by the results in \cite{KS}.

In terms of continuity with respect to boundary data, we prove the following,

\begin{thm}\label{thm:stability-cheeger-intro}
Let $1<p$  and $p'=\frac{p}{p-1}$, and suppose that (H0), (H1) and (H2) hold. There exists a constant
$C>0$ depending  on the structural constants in (H0), (H1), and (H2)
such that the following holds.  For boundary data $f,g\in L^{p'}(\partial \Omega, \nu)$ with
\[
\int_{\partial\Om}f\, d\nu=0=\int_{\partial\Om} g\, d\nu,
\]
denote by $u, v\in N^{1,p}(\bar \Omega)$ respectively the solutions to the
corresponding $p$-Neumann problems such that
\[
\int_\Om u\, d\mu=0=\int_\Om v\, d\mu.
\]
Then, when $p\ge 2$ we have
\[
\| \nabla u- \nabla v \|_{L^{p} (\Omega)} \le
C\, \left(\Vert f\Vert_{L^{p'}(\partial\Om)}+\Vert g\Vert_{L^{p'}(\partial\Om)}\right)^{\tfrac{1}{p(p-1)}}
\|f-g\|_{L^{p'}(\partial \Omega)}^{\tfrac1p},
\]
and when $1<p<2$ we have
\[
\| \nabla u- \nabla v \|_{L^{p} (\Omega)}
\le \left(\Vert f\Vert_{L^{p'}(\partial\Om)}+\Vert g\Vert_{L^{p'}(\partial\Om)}\right)^{\tfrac{3-p}{2(p-1)}}
\|f-g\|_{L^{p'}(\partial \Omega)}^{\tfrac12}.
\]
\end{thm}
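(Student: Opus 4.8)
\noindent\emph{Plan of proof.}
The approach is the usual monotonicity argument together with a trace estimate on $\dOm$; existence of $u,v\in N^{1,p}(\overline\Om)$ with $\int_\Om u\,d\mu=\int_\Om v\,d\mu=0$ is provided by~\cite{MS} (equivalently by Theorem~\ref{thm:equiv-intro}), so only the quantitative bound is at issue. Testing the weak formulations for $u$ and for $v$ against the admissible function $\phi=u-v$ and subtracting gives
\[
\int_\Om\bigl(|\nabla u|^{p-2}\nabla u-|\nabla v|^{p-2}\nabla v\bigr)\cdot\nabla(u-v)\,d\mu=\int_{\dOm}(u-v)(f-g)\,d\nu ,
\]
whose right side is at most $\|u-v\|_{L^p(\dOm,\nu)}\|f-g\|_{L^{p'}(\dOm)}$ by H\"older. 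Under~(H0),~(H1),~(H2) the trace operator $N^{1,p}(\overline\Om)\to L^p(\dOm,\nu)$ is bounded --- it factors through $B^{1-\Theta/p}_{p,p}(\dOm)$, whose embedding into $L^p(\dOm,\nu)$ uses $\Theta<p$; cf.~\cite{MS,M} --- and combined with the $p$-Poincar\'e inequality from~(H1) it yields $\|w\|_{L^p(\dOm,\nu)}\le C\|\nabla w\|_{L^p(\Om)}$ for every $w\in N^{1,p}(\overline\Om)$ with $\int_\Om w\,d\mu=0$. Applying this to $u$ and $v$ separately, $\|u-v\|_{L^p(\dOm,\nu)}\le C\,M$ with $M:=\|\nabla u\|_{L^p(\Om)}+\|\nabla v\|_{L^p(\Om)}$; moreover, testing the equation for $u$ against $\phi=u$ and invoking the same estimate gives $\|\nabla u\|_{L^p(\Om)}^{p-1}\le C\|f\|_{L^{p'}(\dOm)}$, so that $M\le C\bigl(\|f\|_{L^{p'}(\dOm)}+\|g\|_{L^{p'}(\dOm)}\bigr)^{1/(p-1)}$.

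It remains to estimate the left side of the identity from below by $\|\nabla(u-v)\|_{L^p(\Om)}$. When $p\ge2$ the monotonicity inequality $\bigl(|\xi|^{p-2}\xi-|\zeta|^{p-2}\zeta\bigr)\cdot(\xi-\zeta)\ge c_p|\xi-\zeta|^p$ gives $c_p\|\nabla(u-v)\|_{L^p(\Om)}^p\le C\,M\,\|f-g\|_{L^{p'}(\dOm)}$, and inserting the bound for $M$ and extracting a $p$-th root gives the stated estimate directly (for $p\ge2$ this can in fact be improved to $\|\nabla(u-v)\|_{L^p(\Om)}\le C\|f-g\|_{L^{p'}(\dOm)}^{1/(p-1)}$ by estimating $\|u-v\|_{L^p(\dOm,\nu)}$ directly by $\|\nabla(u-v)\|_{L^p(\Om)}$, but the stated form suffices). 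When $1<p<2$ only the degenerate inequality $\bigl(|\xi|^{p-2}\xi-|\zeta|^{p-2}\zeta\bigr)\cdot(\xi-\zeta)\ge c_p|\xi-\zeta|^2(|\xi|+|\zeta|)^{p-2}$ is available, so I would first pass from the weighted $L^2$ quantity it controls to $\|\nabla(u-v)\|_{L^p}$ by H\"older with exponents $\tfrac2p$ and $\tfrac2{2-p}$:
\[
\int_\Om|\nabla(u-v)|^p\,d\mu\le\Bigl(\int_\Om\frac{|\nabla(u-v)|^2}{(|\nabla u|+|\nabla v|)^{2-p}}\,d\mu\Bigr)^{p/2}\Bigl(\int_\Om(|\nabla u|+|\nabla v|)^p\,d\mu\Bigr)^{(2-p)/2} ,
\]
bound the first factor by $(c_p^{-1}C\,M\,\|f-g\|_{L^{p'}(\dOm)})^{p/2}$ using the identity and the boundary estimate, and the second by $M^{p(2-p)/2}$. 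The total power of $M$ is then $\tfrac{p(3-p)}2$; substituting the a priori bound and taking a $p$-th root yields the power $\tfrac{3-p}{2(p-1)}$ on $\|f\|_{L^{p'}(\dOm)}+\|g\|_{L^{p'}(\dOm)}$ and the power $\tfrac12$ on $\|f-g\|_{L^{p'}(\dOm)}$, as claimed; the slightly different shape of the estimate in the two ranges of $p$ is exactly this exponent bookkeeping.

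The vector inequalities and the exponent arithmetic are routine; the ingredient that genuinely uses the structural hypotheses --- and that fixes the dependence of $C$ on the structural constants --- is the trace-plus-Poincar\'e estimate $\|w\|_{L^p(\dOm,\nu)}\le C\|\nabla w\|_{L^p(\Om)}$ for mean-zero $w$. I expect the main obstacle to be confirming that the constant there is controlled purely by the structural constants: the boundedness of the trace operator rests on the John condition~(H0) (via~\cite{MS,M}), the embedding into $L^p(\dOm,\nu)$ on the codimension condition~(H2) with $\Theta<p$, and the replacement of the full $N^{1,p}$-norm by $\|\nabla w\|_{L^p(\Om)}$ on the $p$-Poincar\'e inequality of~(H1) together with the vanishing mean.
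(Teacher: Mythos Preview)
Your proposal is correct and follows essentially the same route as the paper: test the two weak formulations against $\phi=u-v$, subtract, apply H\"older on the boundary term, invoke the monotonicity inequalities~\eqref{eqn:monotonicity} (together with the H\"older $\tfrac2p$--$\tfrac2{2-p}$ trick when $1<p<2$), and control $\|u-v\|_{L^p(\dOm)}$ via the trace theorem plus Poincar\'e and the a~priori energy bound (the paper's Lemma~\ref{lem:control-via-data}, which is exactly your ``test $u$ against itself'' step). Your parenthetical remark that for $p\ge2$ one can sharpen the estimate to $\|\nabla(u-v)\|_{L^p(\Om)}\le C\|f-g\|_{L^{p'}(\dOm)}^{1/(p-1)}$ by bounding $\|u-v\|_{L^p(\dOm)}$ directly in terms of $\|\nabla(u-v)\|_{L^p(\Om)}$ is a valid improvement that the paper does not record.
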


As a consequence of
this theorem together with Lemma~\ref{lem:control-via-data} and the Poincar\'e inequality,
we see that if the boundary data converges in $L^{p'}$ to a function $f\in L^{p'}$, then the solutions converge
to a solution of the Neumann boundary value problem with boundary data $f$.
We will prove this theorem in Section~\ref{Sect-stability} below.
In addition, in Theorem~\ref{thm:stability-uppergradient} we also prove a weaker form of stability that
applies to the Neumann problem with respect to the upper gradient formulation. Theorem~\ref{thm:stability-uppergradient} is
independent of the above theorem as it considers a variant Neumann boundary value problem that arises from an energy minimization
principle that may not correspond to an Euler-Lagrange equation unless the metric measure space is
infinitesimally Hilbertian in the sense of Gigli~\cite{AGS1, AGS2, G, GM}. Such an Euler-Lagrange equation is essential for
our proof of Theorem~\ref{thm:stability-cheeger-intro}. In the setting of hyperbolic filling
as in~\cite{BBS} and in Section~\ref{sec:construct-fractLap}, there is a
Cheeger differential structure that is available to us, and so from the point of view of studying nonlocal minimization problems,
we do not lose much by considering the Cheeger differential formulation.

We are now ready to introduce and discuss the fractional (non-local) operators we are interested in.

\begin{defn}\label{def:fract-Lap-construct-intro} Let $(Z,d,\nu)$ be a metric measure space.
For any $1<p<\infty$ and $0<\theta<1$,
consider a form $\calE:L^p(Z)\times L^p(Z)\to[-\infty,\infty]$,  that is
linear in the second component and with $\calE(\alpha u,\beta v)=|\alpha|^{p-2}\alpha\beta\,\calE(u,v)$,
such that $\calE(u,u)\approx \Vert u\Vert_{B^\theta_{p,p}(Z)}$ whenever
$u$ is in the Besov class $B^\theta_{p,p}(Z)$.
We say that a function $u\in B^\theta_{p,p}(Z)$ is in the domain of the fractional $p$-Laplacian operator
$(-\Delta_p)^{\theta}$ if there is a function $f\in L^{p'}(Z)$ such that the integral identity
\[
\calE(u,\pip)=\int_Z\pip\, f\, d\nu
\]
holds for every  $\pip\in B^\theta_{p,p}(Z)$. We then denote
\[
(-\Delta_p)^{\theta} u=f\in L^{p'}(Z).
\]
\end{defn}

The above definition is concomitant with the notion of Laplacian in the theory of Dirichlet forms, see for example
the comprehensive book~\cite{FOT}.
The fractional operators we consider in this paper are associated to the form \eqref{eq:ET-go-home} below. Its
definition is based on a process reminiscent of the hyperbolic filling technique:    We will show in
Section~\ref{sec:construct-fractLap} that, given $1<p<\infty$ and $0<\theta<1$,
every compact doubling metric measure space $Z$ arises as the boundary of a uniform domain $\Om$ that is equipped
with a measure $\mu$ so that the metric measure space $X=\overline{\Om}=\Om\cup Z$, together with
$Z=\partial\Om$, satisfies conditions~(H0),~(H1) and~(H2), with  $\theta=1-\Theta/p$.
We fix a Cheeger differential structure $\nabla$ on $\Om$.
For each $u\in B^\theta_{p,p}(Z)$ we consider $\widehat{u}$ to be the unique function in $N^{1,p}(\Om)$ such that
$\widehat{u}$ is Cheeger $p$-harmonic in $\Om$ and has trace $Tr(\widehat{u})=u$ $\nu$-almost everywhere on $Z$.
We will show that $\int_\Om|\nabla \widehat{u}|^p\, d\mu\approx \Vert u\Vert_{B^\theta_{p,p}(Z)}$ (see
Section~\ref{sec:construct-fractLap} below). We then set
\begin{equation}\label{eq:ET-go-home}
\calE_T(u,v):=\int_\Om|\nabla\widehat{u}|^{p-2}\nabla\widehat{u}\cdot\nabla \widehat{v}\, d\mu.
\end{equation}
This  construction gives us a way of analyzing a wide array of fractional operators $(-\Delta_p)^{\theta}$ on
the Besov class $B^\theta_{p,p}(Z)$, for we have a broad choice of $\Om$, and for each choice of $\Om$ we then have
the flexibility of choosing a desired Cheeger differential structure as outlined in~\cite{Che} (see~\cite{HKST} for further exposition
on Cheeger differential structure). We may, instead of a Cheeger differential structure, consider the $\Gamma$-limit
of discrete differential structures as discussed in~\cite{DS}.
Should $\Om$ be infinitesimally Hilbertian in the sense of~\cite{G, GM},
we may use the differential structure associated with the infinitesimal Hilbertianity. Additional choices of structures are
available in the Euclidean setting, and a related family of non-local fractional operators were studied
by Caffarelli and Soria-Carro in~\cite{CC} using lower dimensional slices.

 Another approach to the notion of Laplacian is from the theory of Dirichlet forms, see for example~\cite{FOT}.
To emphasize this connection we introduce a new
 form $\calE_p(u,v)$,  given by
\[
\calE_p(u,v)=\int_Z\int_Z\frac{|u(y)-u(x)|^{p-2}(u(y)-u(x))(v(y)-v(x))}{\nu(B(x,d(x,y))\, d(x,y)^{\theta p}}\, d\nu(y)\, d\nu(x),
\]
with $u,v\in L^p(Z)$.
Note that $\calE_2$ is a Dirichlet form in the sense of~\cite{FOT}, and is associated with a jump process, see
also~\cite{BBCK, CK1, CK2, CKW}.
In the following theorem, $Q_Z$ is the lower mass bound exponent corresponding to $Q$ from~\eqref{eq:lower-mass-exp} for the
doubling measure $\nu$ on $Z$, and  $p'=p/(p-1)$ is the H\"older conjugate of $p$.

\begin{thm}\label{thm:main-fract-Lap-intro}
Let $(Z,d,\nu)$ be a compact doubling metric measure space, $1<p<\infty$, and $0<\theta<1$. Then
the
form $\calE_T$ on $B^\theta_{p,p}(Z)$
given by~\eqref{eq:ET-go-home} satisfies
$\calE_T(u,u)\approx\calE_p(u,u)$ for each $u\in B^\theta_{p,p}(Z)$
with the comparison constant depending solely on the doubling constant of $\nu$ and the indices $p,\theta$.  Denote by
$(-\Delta_p)^\theta$ the fractional $p$-Laplacian associated to the form $\calE_T(u,u)$. Moreover,
\begin{enumerate}
\item[{\rm (i)}] For each $f\in L^{p'}(Z)$ with $\int_Zf\, d\nu=0$ there exists a function $u_f\in B^\theta_{p,p}(Z)$ such that
$(-\Delta_p)^\theta u_f=f$ on $Z$. If $\widetilde{u_f}$ is any other such function,
then $u_f-\widetilde{u_f}$ is constant $\nu$-a.e.~in $Z$. If in addition $f\in L^q(Z)$ for some
$q>\max\{1, Q_Z/\theta\}$, then $u_f$ is H\"older continuous on $Z$.

\item[{\rm (ii)}] There exists a constant $C>0$, depending only on the structure constants, such that if $f_1,f_2\in L^{p'}(Z)$ with
$\int_{Z}f_1\, d\nu=0=\int_{Z}f_2\, d\nu$
and $u_{f_1}$, $u_{f_2}$ are the functions in $B^\theta_{p,p}(Z)$ corresponding to $f_1$, $f_2$ as above, then
\[
\Vert u_{f_1}-u_{f_2}\Vert_{L^p(Z)}\le C\, \max\{\Vert f_1\Vert_{L^{p'}(Z)},\Vert f_2\Vert_{L^{p'}(Z)}\}^\kappa\,
\Vert f_1-f_2\Vert_{L^{p'}(Z)}^\tau,
\]
with $\kappa=1/(p(p-1))$, $\tau=1/p$ when $p\ge 2$ and $\kappa=(3-p)/(2(p-1))$, $\tau=1/2$ when $1<p<2$.
\item[{\rm (iii)}] Let $W\subset Z$ be an open (nonempty) subset such that $f=0$ on $W$.
There exists a constant $C>0$ depending only on the 
structure constants, such that if $u\ge 0$ is a  solution of $(-\Delta_p)^\theta u=f$ in $Z$, then $u$ satisfies the Harnack inequality
$ \sup_B u \le C \inf_B u$ for all balls $B=B_R$ such that  $B_{4R}\subset W$.
\end{enumerate}
\end{thm}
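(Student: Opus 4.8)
The plan is to transfer the three assertions to the Neumann boundary value problem on a uniformized hyperbolic filling of $Z$, where Theorems~\ref{thm:equiv-intro}, \ref{thm:holder-intro} and~\ref{thm:stability-cheeger-intro} are already available. Fix $1<p<\infty$ and $0<\theta<1$ and let $\Om$ be the bounded uniform domain built in Section~\ref{sec:construct-fractLap} from the hyperbolic filling construction of~\cite{BBS}, equipped with its measure $\mu$ and a Cheeger differential structure $\nabla$: then $\overline\Om=\Om\cup Z$ with $Z$ biLipschitz equivalent to $\partial\Om$, conditions~(H0)--(H2) hold with $\Theta=p(1-\theta)$, and for every $u\in B^\theta_{p,p}(Z)$ the Cheeger $p$-harmonic extension $\widehat u\in N^{1,p}(\Om)$ with $Tr(\widehat u)=u$ satisfies $\int_\Om|\nabla\widehat u|^p\,d\mu\approx[u]^p_{B^\theta_{p,p}(Z)}$. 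For the comparison $\calE_T\approx\calE_p$: by definition $\calE_T(u,u)=\int_\Om|\nabla\widehat u|^p\,d\mu\approx[u]^p_{B^\theta_{p,p}(Z)}$, while $\calE_p(u,u)=\int_Z\int_Z|u(y)-u(x)|^p\,[\nu(B(x,d(x,y)))\,d(x,y)^{\theta p}]^{-1}\,d\nu(y)\,d\nu(x)$ is one of the standard equivalent expressions for $[u]^p_{B^\theta_{p,p}(Z)}$ when $\nu$ is doubling, with constants depending only on the doubling constant of $\nu$ and on $p,\theta$; chaining the two comparisons gives the claim. Since $\calE_T$ vanishes on constants, a Besov--Poincar\'e inequality on the compact space $Z$ shows that, modulo constants, $\calE_T(u,u)\approx\Vert u\Vert^p_{B^\theta_{p,p}(Z)}$, so the associated $(-\Delta_p)^\theta$ is a well-defined fractional $p$-Laplacian as in Definition~\ref{def:fract-Lap-construct-intro}; moreover, because $\widehat u$ is $p$-harmonic, $\int_\Om|\nabla\widehat u|^{p-2}\nabla\widehat u\cdot\nabla\Phi\,d\mu$ is unchanged if $\Phi\in N^{1,p}(\overline\Om)$ is replaced by any function with the same trace, so $\calE_T(u,\cdot)$ is well defined on $B^\theta_{p,p}(Z)$ and linear there.

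Next I would set up the dictionary: for $f\in L^{p'}(Z)$ with zero $\nu$-average, a function $u\in B^\theta_{p,p}(Z)$ satisfies $(-\Delta_p)^\theta u=f$ if and only if $\widehat u$ solves the Neumann problem on $\Om$ with data $f$ in the sense of Theorem~\ref{thm:equiv-intro}(a); and every Neumann solution $w$ equals $\widehat{Tr(w)}$, because $w$ is $p$-harmonic and $p$-harmonic functions are the unique energy minimizers for their own boundary data. Both implications are one computation: for $\phi\in N^{1,p}(\overline\Om)$, using the $p$-harmonicity of $\widehat u$ to evaluate $\calE_T(u,\cdot)$ with the extension $\phi$ gives $\calE_T(u,Tr(\phi))=\int_\Om|\nabla\widehat u|^{p-2}\nabla\widehat u\cdot\nabla\phi\,d\mu$, while $\int_{\partial\Om}\phi f\,d\nu=\int_Z Tr(\phi)\,f\,d\nu$; thus the Neumann identity tested against $\phi$ is exactly $\calE_T(u,Tr(\phi))=\int_Z Tr(\phi)\,f\,d\nu$, and running this over all $\phi$ (equivalently over all traces $\varphi\in B^\theta_{p,p}(Z)$, using $Tr(\widehat\varphi)=\varphi$) proves the equivalence.

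With the dictionary in hand the three items follow. \emph{(i)} By~\cite{MS} the Neumann problem on the bounded uniform domain $\Om$ with data $f$ has a solution $w$, unique up to an additive constant by the strict convexity modulo constants of the energy $I$ in Theorem~\ref{thm:equiv-intro}(b); normalizing $\int_\Om w\,d\mu=0$ fixes it, and the trace theorem on $\Om$ puts $u_f:=Tr(w)$ in $B^\theta_{p,p}(Z)$, with $(-\Delta_p)^\theta u_f=f$ and uniqueness up to a constant by the dictionary. If moreover $f\in L^q(Z)$, choose the lower mass-bound exponent large enough that $p\le Q$; with $\Theta=p(1-\theta)$ and $Q_Z=Q-\Theta$ one computes $(Q-\Theta)/(p-\Theta)=Q_Z/(p\theta)\le\max\{1,Q_Z/\theta\}<q$, so Theorem~\ref{thm:holder-intro} gives H\"older continuity of $w$ on $\overline\Om$, hence of $u_f$ on $\partial\Om\cong Z$. \emph{(ii)} Let $w_i=\widehat{u_{f_i}}$ be the Neumann solutions with data $f_i$ normalized by $\int_\Om w_i\,d\mu=0$; Theorem~\ref{thm:stability-cheeger-intro} bounds $\Vert\nabla w_1-\nabla w_2\Vert_{L^p(\Om)}$ by the asserted product with exponents $(\kappa,\tau)$ in the regimes $p\ge2$ and $1<p<2$. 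Since $w_1-w_2$ has zero $\mu$-average, the $p$-Poincar\'e inequality in~(H1) gives $\Vert w_1-w_2\Vert_{L^p(\Om)}\lesssim\Vert\nabla w_1-\nabla w_2\Vert_{L^p(\Om)}$, and the trace estimate (Lemma~\ref{lem:control-via-data}) then yields $\Vert u_{f_1}-u_{f_2}\Vert_{L^p(Z)}=\Vert Tr(w_1-w_2)\Vert_{L^p(Z)}\lesssim\Vert w_1-w_2\Vert_{N^{1,p}(\overline\Om)}\lesssim\Vert\nabla w_1-\nabla w_2\Vert_{L^p(\Om)}$, whence (ii) with the same $(\kappa,\tau)$. \emph{(iii)} If $u\ge0$ solves $(-\Delta_p)^\theta u=f$ with $f=0$ on the open set $W\subset Z$, then $\widehat u$ is $p$-harmonic with trace $u\ge0$, and since truncation does not increase energy $\max(\widehat u,0)$ is another extension of $u$ with no larger energy, so $\widehat u\ge0$ in $\Om$; by the dictionary $\widehat u$ is the Neumann solution with data $f$. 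For a ball $B=B_R\subset Z$ with $B_{4R}\subset W$, applying the Harnack inequality of Theorem~\ref{thm:holder-intro} at the centre $x$ of $B_R$ with radius $R$ is legitimate, since $x\in W\subset\Om\cup W$ and $B(x,2R)\cap\overline\Om=(B(x,2R)\cap\Om)\cup(B(x,2R)\cap Z)\subset\Om\cup W$; restricting the resulting $\sup_{B(x,R)\cap\overline\Om}\widehat u\le C\inf_{B(x,R)\cap\overline\Om}\widehat u$ to $B(x,R)\cap Z=B_R$ gives $\sup_{B_R}u\le C\inf_{B_R}u$.

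The bulk of the analytic work is not in this assembly but in the inputs it rests on: the two-sided estimate $\int_\Om|\nabla\widehat u|^p\,d\mu\approx[u]^p_{B^\theta_{p,p}(Z)}$ together with the trace and extension theory on the uniformized hyperbolic filling (Section~\ref{sec:construct-fractLap}, drawing on~\cite{BBS, MS, KS}), and the three Neumann theorems themselves. I expect the only genuinely delicate points internal to the present argument to be the well-definedness and linearity of $\calE_T(u,\cdot)$ (both hinging on the $p$-harmonicity of $\widehat u$) and the identification $\calE_p(u,u)\approx[u]^p_{B^\theta_{p,p}(Z)}$; after that, (i)--(iii) amount to careful bookkeeping with the mass-bound exponents and the trace inequality.
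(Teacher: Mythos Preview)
Your proposal is correct and follows essentially the same route as the paper: transfer everything to the Neumann problem on the uniformized hyperbolic filling and invoke Theorems~\ref{thm:equiv-intro}, \ref{thm:holder-intro}, \ref{thm:stability-cheeger-intro}. The dictionary you set up is exactly Proposition~\ref{thm:solving-fract-laplace}, the energy comparison is Lemma~\ref{lem:ETvsE}, and your exponent check $(Q-\Theta)/(p-\Theta)=Q_Z/(p\theta)$ matches the paper's observation that the relevant lower mass bound exponent for $\mu$ at boundary points is $Q_Z+\Theta$.

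Two small remarks. First, in part~(iii) you obtain $\widehat u\ge 0$ by truncation and uniqueness of the energy minimizer, whereas the paper invokes the strong maximum principle from~\cite{KS}; both are valid, and your argument is slightly more self-contained. Second, the reference you give for the trace bound in part~(ii) should be Theorem~\ref{thm:Maly-Trace} (boundedness of $T:N^{1,p}(\Om)\to L^p(\partial\Om)$) rather than Lemma~\ref{lem:control-via-data}, which controls the solution by the data; this is only a labeling slip and does not affect the argument.
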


This theorem will be proved in Section~\ref{sec:construct-fractLap}.

\begin{remark} We can weaken the hypotheses of the above theorem by replacing the compactness requirement of $Z$
with the condition that $Z$ is biLipschitz equivalent to the boundary of a uniform domain that satisfies
the conditions~(H0), (H1), and~(H2). Thus the above theorem also includes the cases when $Z$ is the entire Euclidean space
as in~\cite{CS} or even a  Carnot groups as in~\cite{FF}.
\end{remark}

In~\cite{EbGKSS} an alternate construction, based on spectral theory, for a fractional Laplacian $(-\Delta_2)^\theta$ corresponding to a Cheeger differential structure
on a complete doubling metric measure space  $(Z,d_Z,\nu)$
supporting a $2$-Poincar\'e inequality was constructed and studied. The methods used there depended strongly on
the availability of the Poincar\'e inequality on the metric space itself, and moreover, it is not adaptable to fractional powers of
the nonlinear $p$-Laplacian operator $(-\Delta_p)^\theta$.
In~\cite{EbGKSS}, the  metric space  $(Z,d_Z,\nu)$ is naturally seen as the boundary of the unbounded domain
$Z\times(0,\infty)$, where $X=Z\times[0,\infty)$ is equipped with the $\ell_2$-product metric and the product measure.
In contrast, in our present study, we consider fractional Laplacians on the boundary of a \emph{bounded} domain.
Therefore, to show that the two approaches
lead to the same notion of nonlocal Laplacian, we show that we can conformally transform $X$
into a bounded doubling metric measure space $\Om$
so that it supports a $2$-Poincar\'e inequality and that functions that are $2$-harmonic in $Z\times(0,\infty)=X$ are also
$2$-harmonic in this modified space, and  finally that $Z$ is isometric to the boundary of this modified space.

\begin{thm}\label{reconciliation}
Let $(Z,d,\nu)$ be a compact
doubling metric measure space supporting a $2$-Poincar\'e inequality, and let
$X=Z\times(0,\infty)$. There is a conformal transformation of $X$ to a metric space $\Om$ with transformed
metric $d_\rho$, together with a natural tranformation $\nu_\omega$ of the measure $\nu$, so that $(\Om,d_\rho,\nu_\omega)$
is a John domain in its completion $\overline{\Om}$ and so that $(\overline{\Om},d_\rho,\nu_\omega)$
satisfies our hypotheses~(H0),~(H1) and~(H2). Moreover, $Z$ is isometric to $\partial\Om$, and
the fractional Laplacian $(-\Delta_2)^\theta$ as constructed in Theorem~\ref{thm:main-fract-Lap-intro} above agrees with the
construction given in~\cite{EbGKSS}.
\end{thm}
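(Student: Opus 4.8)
\textbf{Proof proposal for Theorem~\ref{reconciliation}.}

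The plan is to realize $(\overline{\Om},d_\rho,\nu_\omega)$ as a \emph{uniformization} of the unbounded product space $X=Z\times(0,\infty)$, following the template of the hyperbolic-filling/uniformization construction of Bonk--Butscher--Suter \cite{BBS} and the conformal-deformation theory of Bonk--Heinonen--Koskela (which is the engine behind \cite{BBS}). Concretely, I would fix $\varepsilon>0$ (to be calibrated against $\theta$) and define on $X$ the density $\rho_\varepsilon(x,t)=e^{-\varepsilon\, d_X((x,t),(x_0,1))}$ for a fixed basepoint $(x_0,1)$, or more simply work with the ``height'' coordinate directly: set $\rho_\varepsilon(x,t) \asymp t^{\varepsilon}$ near $t=0$ and $\rho_\varepsilon(x,t)\asymp e^{-\varepsilon t}$ for large $t$, and let $d_\rho$ be the length metric induced by $\rho_\varepsilon\, ds_X$. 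The uniformized space $\Om$ obtained this way has finite diameter, and its boundary $\partial\Om = \overline{\Om}\setminus\Om$ is exactly the ``$t=0$'' copy of $Z$; the first bookkeeping step is to check that the identity map $Z\to\{t=0\}\subset\partial\Om$ is bi-Lipschitz (in fact, after normalizing $\varepsilon$, isometric up to the choice of metric on $Z$, which is what is claimed). The measure is deformed accordingly: $d\nu_\omega = \rho_\varepsilon^{\beta}\, d(\nu\times\Leb{1})$ for the appropriate exponent $\beta=\beta(\varepsilon,\theta)$, chosen so that the codimension condition (H2) comes out with $\Theta=2(1-\theta)$, i.e. $\theta = 1-\Theta/2$.

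Next I would verify the three structure hypotheses for $(\overline{\Om},d_\rho,\nu_\omega)$. For (H1), doubling of $\nu_\omega$ is inherited from doubling of $\nu$ on $Z$ (this is a direct computation using the product structure and the power weight), and the $2$-Poincar\'e inequality on $\overline\Om$ follows from the $2$-Poincar\'e inequality on $Z$ by the standard argument that a product of a PI-space with an interval is a PI-space, combined with the fact that the conformal weight $\rho_\varepsilon$ is comparable on Whitney-type balls so that the weighted PI follows from the unweighted one (this is exactly the mechanism in \cite{BBS}, and is where the hypothesis that $Z$ supports a $2$-Poincar\'e inequality is essential — note this theorem, unlike Theorem~\ref{thm:main-fract-Lap-intro}, does assume PI on $Z$). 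For (H0), one checks that the uniformized space is a uniform domain, hence in particular a John domain; again this is part of the output of the uniformization construction. For (H2), the codimensionality $\nu_\omega(B(\zeta,r))\asymp \mu(B(\zeta,r)\cap\Om)/r^{\Theta}$ is a direct estimate comparing the measure of a boundary ball against the measure of the corresponding ``Carleson box'' $B(\zeta,r)\cap\Om$, using the explicit power weight; the exponent $\Theta$ is read off from the exponent $\beta$ in the weight, and one solves for $\beta$ to get $\Theta=2(1-\theta)$.

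The analytic heart of the theorem is the claim that the two fractional Laplacians coincide. Here the key observation is \emph{conformal invariance of the $2$-Dirichlet energy and of $2$-harmonicity in dimension-free form}: a conformal change of metric in a metric measure space, with the measure deformed by the ``right'' power of the density (the power that makes the deformed space ahlfors-regular-like against the conformal dimension), leaves the minimal $2$-weak-upper-gradient energy $\int |\nabla u|^2$ unchanged, up to the correct Jacobian bookkeeping — and in particular sends $2$-harmonic functions to $2$-harmonic functions and preserves the Dirichlet-to-Neumann / energy-of-harmonic-extension pairing. So if $u\in B^\theta_{2,2}(Z)$ and $\widehat u$ is its $2$-harmonic extension to $Z\times(0,\infty)$ used in \cite{EbGKSS}, then $\widehat u$ (viewed in $\Om$) is also the $2$-harmonic extension of $u$ in the sense of Theorem~\ref{thm:main-fract-Lap-intro}, because harmonic extensions are unique (Theorem~\ref{thm:equiv-intro}(b), energy minimization) and energy is conformally preserved. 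Consequently $\calE_T(u,v) = \int_\Om \nabla\widehat u\cdot\nabla\widehat v\, d\nu_\omega = \int_{Z\times(0,\infty)}\nabla\widehat u\cdot\nabla\widehat v\, d(\nu\times\Leb1)$, which is precisely the bilinear form used in \cite{EbGKSS} to define their $(-\Delta_2)^\theta$ (once one matches their spectral-theoretic description of the extension problem with the energy-minimization description, which is standard Dirichlet-form theory as in \cite{FOT}). Matching the normalizations — the constant $\varepsilon$, the exponent $\beta$, and hence the relation $\theta=1-\Theta/2$ — then gives equality of the operators, not merely comparability; and since by Theorem~\ref{thm:main-fract-Lap-intro} the $\calE_T$-operator also agrees with the jump-form operator $\calE_2$, all three descriptions coincide.

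The step I expect to be the main obstacle is the \emph{exact} (not just bi-Lipschitz / not just up-to-constants) matching in the last paragraph: conformal deformations naturally preserve energy only up to multiplicative constants coming from the choice of basepoint and the non-uniqueness of the density, and the construction in \cite{EbGKSS} is phrased spectrally (via the operator $\sqrt{-\Delta}$ and a degenerate-elliptic extension with weight $t^{1-2\theta}$) rather than via uniformization, so reconciling the two requires carefully identifying the precise weight exponent on $X$ that the uniformization produces with the Caffarelli--Silvestre weight $t^{1-2\theta}=t^{a}$, tracking how the metric deformation rescales the vertical variable, and checking that the boundary trace operator and the Besov norm $\|\cdot\|_{B^\theta_{2,2}(Z)}$ are literally the same in both frameworks. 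A secondary but nontrivial point is verifying that $2$-harmonic functions are genuinely preserved under the conformal change at the level of the Cheeger differential structure (as opposed to only at the level of energy), which requires that the conformal deformation act compatibly with the Cheeger chart — this is available because $Z\times(0,\infty)$ with the product structure carries an obvious Cheeger structure that transforms functorially, but it must be stated carefully.
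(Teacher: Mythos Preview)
Your overall strategy is right and matches the paper's: conformally deform $X=Z\times(0,\infty)$ into a bounded domain and exploit the conformal invariance of the $2$-Dirichlet energy (the paper makes exactly the computation $\int_X g_{u,\rho}^2\,d\mu_\omega=\int_X g_u^2\,d\mu_X$ with $\omega=\rho^2$, and correspondingly modifies the Cheeger norm by $|\nabla u|_{\rho}=\rho^{-1}|\nabla u|$). But your choice of conformal density is wrong in a way that would derail the argument. You propose $\rho_\varepsilon\asymp t^\varepsilon$ near $t=0$; with $\varepsilon>0$ this collapses horizontal distances at height $t$ to $\asymp t^\varepsilon d_Z$, so the boundary $\{t=0\}$ becomes a single point rather than an isometric copy of $Z$, and in any case any nontrivial power of $t$ near $t=0$ alters both the boundary metric and the Neumann normalization $y^a\partial_y u\to f$. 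The paper instead takes $\rho(x,y)=\min\{1,y^{-\beta}\}$ and $\omega=\rho^2$: the density is \emph{identically $1$} on $Z\times[0,1]$, so the metric, the measure, the trace operator, and the boundary condition are literally unchanged near $Z$, and the deformation acts only at $y\to\infty$ (a sphericalization, adding a single point $\infty$). This is precisely what dissolves the ``main obstacle'' you flag at the end: there is no exponent-matching to do near the boundary because nothing has been changed there.

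Relatedly, you have the bookkeeping of the codimension exponent backwards. The exponent $\Theta=a+1=2(1-\theta)$ does not come from the conformal weight $\rho$; it comes from the Caffarelli--Silvestre weight $y^a$ that is \emph{already present} in the measure $d\mu_X(x,y)=y^a\,d\mu_Z(x)\,dy$ used in \cite{EbGKSS}. The conformal deformation is not there to manufacture $\Theta$; its only job is to compactify at infinity so that $\Om$ is bounded (hence John) and $\overline\Om$ is complete. Your plan to ``solve for $\beta$ to get $\Theta=2(1-\theta)$'' conflates these two distinct weights.

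Finally, there is a step you do not mention that the paper needs: after adjoining the point $\infty$, one must show it is removable for $2$-harmonicity. The paper does this by proving that the family of curves through $\infty$ has zero $2$-modulus (equivalently, $\{\infty\}$ has zero $N^{1,2}$-capacity), so that a function with finite Dirichlet energy that is $2$-harmonic on $Z\times(0,\infty)$ is automatically $2$-harmonic on $\Om=\widehat X\setminus(Z\times\{0\})$. Without this, you cannot conclude that the \cite{EbGKSS} harmonic extension $\widehat u$ is the harmonic extension in $\Om$ used to define $\calE_T$.
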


This theorem will be proved in Section~\ref{Sec:Reconcile}, see Subsection~\ref{SubSec-reconciliation}.

\begin{remark}
 The fractional Neumann boundary value problem considered in~\cite[(1.6)]{CSt} for bounded Euclidean domains $U$
correspond to the solutions constructed in~\cite{EbGKSS} for the domain $\Om=\overline{U}\times(0,\infty)$. Theorem \ref{reconciliation}
 shows then  that the problem studied in~\cite[(1.6)]{CSt}
corresponds to the problem studied in the present paper, in the special setting of $p=2$ and in the Euclidean setting. We point out that the H\"older regularity result~\cite[Theorem~1.2]{CSt} requires
the same integrability condition on the  nonhomogenous data $f$ as that of Theorem~\ref{thm:holder-intro}.
\end{remark}

In the smooth setting of Euclidean spaces, Carnot groups and asymptotically hyperbolic Riemannian manifolds, there is now a vast literature on
fractional powers of the Laplacian operator, and~\cite{BGS, CS, CC, CSt, CG, ST} is merely a small sampling of the current
literature; these all are associated with the fractional powers of the \emph{linear} operator (i.e., $p=2$). The literature
in more general non-smooth setting is more limited; we refer interested readers to~\cite{BG, BGMN, BLS, CKW, EbGKSS, FF, G} as
well as the references listed therein for the linear setting.
For symmetry results related to some other
problems involving fractional $p$-Laplacian in the setting of Euclidean and Heisenberg group
settings and for the use of the method of moving planes in those settings, we refer the interested reader
to~\cite{BGJ, CL, DW} and the references listed therein.
 Analysis of non-local (linear, that is, $p=2$)
operators was undertaken in the papers~\cite{CK1, CK2, CKW} from the point of view
of jump processes and more general $\alpha$-stable processes (recall that for us, $\alpha=2\theta$) using the
language of Dirichlet forms.
 In~\cite{CK1}, Chen and Kumagai consider such processes on Ahlfors regular complete
metric measure spaces, and prove H\"older regularity, see for example~\cite[Theorem~4.14]{CK1}. In~\cite{CK2}
they extend this study to doubling metric measure spaces where a notion of uniform doubling property is also assumed;
this additional property is removed in the recent paper~\cite{CKW}. Our approach is more aligned with the approach of~\cite{CS},
as that approach is adaptable also to the nonlinear ($p\ne 2$) setting as well.

\section{Background}

In this section we gather together the needed background used in the paper.
The triple $(X,d,\mu)$ denotes a complete metric measure space with $\mu$ a Radon measure. We now list some basic
notions associated with the theory of analysis on metric measure spaces. For $x\in X$ and $r>0$, the ball centered at $x$ and with
radius $r$ is denoted $B(x,r)$, that is, $B(x,r)$ consists of all the points $y\in X$ for which $d(x,y)<r$. The closed ball
$\overline{B}(x,r)$ consists of all the points $y\in X$ for which $d(x,y)\le r$. Note that in general $\overline{B}(x,r)$ could be a
larger set than the topological closure of the open ball $B(x,r)$, but if $X$ is a length space these two sets are the same.

\subsection{ John and uniform domains} \label{subsect:John}

Recall that we assume $X$ to be a complete metric space.
We say that a domain $\Om\subset X$ is a {\it John domain} if there is a point $x_0\in\Om$, called a John center,
and a John constant $C_J\ge 1$ such that whenever $x\in\Om$, there is a rectifiable curve $\gamma_x$
in $\Om$ with end points $x_0$ and $x$ such that for each point $z$ in the trajectory of $\gamma_x$ we have
that
\[
 \dist_{X\setminus\Om}(z)\ge C_J^{-1} \ell(\gamma_x[x,z]),
\]
where $\gamma_x[x,z]$ denotes the segments of $\gamma_x$ with end points $z$ and $x$. Clearly
a John domain is a connected open set, and moreover, if $\Om\ne X$ then $\Om$ is bounded.  In this paper we also
refer to a narrower class of domains, called {\it uniform domains}, characterized by the existence of a constant
$C_U\ge 1$ such that for every pair $x,y\in \Om$ there exists a rectifiable curve $\gamma_{xy}$ joining them, with the property
\[
 \dist_{X\setminus\Om}(z)\ge C_U^{-1}  \min\bigg(\ell(\gamma_{xy} [x,z]),
 \ell(\gamma_{xy}[z,y])\bigg) \ \text{ and } \ \ell(\gamma_{xy}) \le C_U d(x,y),
\]
for all $z\in \gamma_{xy}$.  Clearly a bounded uniform domain is also John, but the converse is false.

\subsection{ Measures} \label{subsect:doubling}
We say that the measure $\mu$ is \emph{doubling} if there is a constant $C_d\ge 1$ such that
\[
0<\mu(B(x,2r))\le C_d\, \mu(B(x,r))<\infty
\]
for each $x\in X$ and $r>0$.
Doubling measures satisfy the following lower mass bound property: there are constants $c>0$
and $Q>0$ such that
for each $x\in X$, $0<r<R$, and for each $y\in B(x,R)$,
\begin{equation}\label{eq:lower-mass-exp}
c\left(\frac{r}{R}\right)^Q\le \frac{\mu(B(y,r))}{\mu(B(x,R))},
\end{equation}
see for example~\cite[page~76]{HKST}.
The constant $c$ depends solely on the doubling constant $C_d$.

The measure $\mu$ is \emph{Ahlfors} $Q$-regular for some $Q>0$ if there is a constant
$C\ge 1$ such that for all $x\in X$ and $r>0$ we have $C^{-1}r^Q\le \mu(B(x,r))\le C\, r^Q$.
It is now well-known that parts of harmonic analysis can be conducted on doubling metric measure
spaces, as described in~\cite[page~8]{Stein}, though currently there is significant headway in extending the theory of singular integrals
beyond doubling spaces. However, much of the theory of quasiconformal maps seems to require $\mu$ to be Ahlfors regular,
see for instance~\cite{HK}.
If $\mu$ is Ahlfors $Q$-regular, then $\mu$ is comparable to the $Q$-dimensional Hausdorff measure on $X$.

\subsection{Newton-Sobolev functions and Poincar\'e inequalities}\label{subsect:2.2}
We are interested in using a first-order calculus in metric measure spaces; such a first order calculus was first developed by
Heinonen and Koskela in their seminal paper~\cite{HK} in the process of investigating quasiconformal mappings between
Ahlfors regular metric spaces; see also~\cite{BBbook, HajK, HKST, Sh}.
The idea here is that one needs only the information encoded in magnitude of the gradient of a function in order to conduct
much of first-order calculus. Given a measurable function $u:X\to\R$, we say that a non-negative Borel measurable function $g$ on
$X$ is an \emph{upper gradient} of $u$ if
\[
 |u(x)-u(y)|\le \int_\gamma g\, ds
\]
for every non-constant compact rectifiable curve $\gamma$ in $X$; here, $x$ and $y$ denote the terminal points of $\gamma$.
The function $u$ is said to be in the Dirichlet--Sobolev class $D^{1,p}(X)$ (also known as the homogeneous Sobolev class)
if $u$ has an upper gradient that belongs to $L^p(X)$;
and $u$ is said to be in the Newton-Sobolev class $N^{1,p}(X)$ if it is in $D^{1,p}(X)$ and in addition, $u$ itself belongs to
$L^p(X)$. Given that upper gradients are not unique, we set the energy semi-norm on $D^{1,p}(X)$ by
\[
\mathcal{E}_p(u)^p:=\inf_g\int_Xg^p\, d\mu,
\]
where the infimum is over all upper gradients $g$ of $u$. The norm on $N^{1,p}(X)$ is given by
\[
\Vert u\Vert_{N^{1,p}(X)}:=\Vert u\Vert_{L^p(X)}+\mathcal{E}_p(u).
\]
Indeed, if $1\le p<\infty$, for each $u\in D^{1,p}(X)$ there is a unique (up to sets of $\mu$-measure zero) non-negative function $g_u$
that is the $L^p$-limit of a sequence of upper gradients of $u$ from $L^p(X)$ and so that for each upper gradient $g$ of $u$ we
have that $\Vert g_u\Vert_{L^p(X)}\le \Vert g\Vert_{L^p(X)}$. The functions $g_u$ belong to a larger class of ``gradients" of $u$,
called $p$-weak upper gradients, see for example~\cite{HajK, HKST, Sh} or~\cite{BBbook}.

For $1\le p<\infty$, the metric measure space $(X,d,\mu)$ is said to support a $p$-Poincar\'e inequality if there are constants
$C_P>0$ and $\lambda\ge 1$ such that for all $u\in D^{1,p}(X)$ and balls $B=B(x,r)$ in $X$, we have
\[
\jint_{B(x,r)}|u-u_B|\, d\mu\le C_P\, r\, \left(\jint_{B(x,\lambda r)}g_u^p\, d\mu\right)^{1/p}.
\]
It was shown in~\cite{HajK} that if $X$ is a length space, then we can take $\lambda=1$ at the expense of increasing the constant
$C_P$.

For the rest of this section, we consider $\Om$ to be a domain in a complete metric space $X$.
From~\cite[Section~7]{AS} we know that if $X$ is locally compact and equipped with a measure $\mu$ so that
$\mu$ is doubling and supports a $p$-Poincar\'e inequality for some $1\le p<\infty$, then the zero-extension of $\mu$ to
the completion $\overline{X}$ of $X$ also is doubling and supports a $p$-Poincar\'e inequality. Moreover, in this case,
$N^{1,p}(X)=N^{1,p}(\overline{X})$. We will exploit this property in this paper by using the identity
$N^{1,p}(\Omega)=N^{1,p}(\overline{\Omega})$ when the restriction of $\mu$ to $\Om$ is also doubling and supports
a $p$-Poincar\'e inequality.

\subsection{Besov spaces}\label{subsec2.4}

Consider a metric measure space $(Z,d,\nu)$.  For $0<\theta<1$ and $1<p<\infty$ we will consider the following
Besov energy:
\[
\Vert u\Vert_{\theta,p}^p:=
\int_{Z}\int_{Z}\frac{|u(y)-u(x)|^p}{d(x,y)^{\theta p}\, \nu(B(x,d(x,y)))}\, d\nu(y)\, d\nu(x),
\]
and set $B^\theta_{p,p}(Z)$ to be the space of all $L^p$--functions for which this energy is finite.
If $\nu$ is Ahlfors $Q$-regular, then
we can replace $\nu(B(x,d(x,y))$ with $d(x,y)^Q$ to obtain an equivalent energy. The following lemma
holds also for this modified norm.
The homogeneous Besov space $HB^\theta_{p,p}(\partial\Omega)$ is the collection of equivalence classes of functions from
$B^\theta_{p,p}(\partial\Omega)$, where two functions $u,v\in B^\theta_{p,p}(\partial\Omega)$ are said to be equivalent
if $\Vert u-v\Vert_{\theta,p}=0$, that is, $u-v$ is $\nu$-a.e.~constant on $\partial\Omega$. By a slight
abuse of notation, we conflate equivalence classes that are elements of $HB^\theta_{p,p}(\partial\Omega)$
with representative functions in those classes, but are careful to remember that then there is an ambiguity up to additive
constants here.

\begin{lem}\label{lem:Lp-BesovE}
Let $Z$ be a bounded metric space equipped with a measure $\nu$ with $\nu(Z)<\infty$.  If $f\in HB^\theta_{p,p}(Z)$, then $f\in L^p(Z)$.
Moreover, there exists $C>0$ such that for all $f\in HB^\theta_{p,p}(Z)$ we have that
\begin{equation}\label{eq:Lp-Bes}
\|f-f_Z\|_{L^p(Z)}\le C\|f\|_{\theta,p}.
\end{equation}
Here $f_Z=\jint_Z f\, d\nu$.
In particular, $HB^\theta_{p,p}(\partial\Omega)$ is a reflexive Banach space under the norm $\Vert \cdot\Vert_{\theta,p}$,
and is a Hilbert space when $p=2$.
\end{lem}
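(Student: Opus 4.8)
The plan is to derive the quantitative estimate \eqref{eq:Lp-Bes} from Jensen's inequality together with a crude lower bound on the Besov kernel that is available because $Z$ is bounded and $\nu(Z)<\infty$; the qualitative statement $f\in L^p(Z)$ then follows for free, and the Banach/Hilbert structure of $HB^\theta_{p,p}(Z)$ is read off from an isometric embedding of this space, as a closed subspace, into an $L^p$-space over $Z\times Z$.

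First I would prove \eqref{eq:Lp-Bes}. Set $D:=\diam Z<\infty$. For $x,y\in Z$ we have $d(x,y)\le D$ and $B(x,d(x,y))\subseteq Z$, hence $d(x,y)^{\theta p}\,\nu(B(x,d(x,y)))\le D^{\theta p}\,\nu(Z)$, and therefore
\[
\int_Z\int_Z|f(x)-f(y)|^p\,d\nu(y)\,d\nu(x)\le D^{\theta p}\,\nu(Z)\,\|f\|_{\theta,p}^p .
\]
If $f$ is merely measurable with $\|f\|_{\theta,p}<\infty$, the right-hand side is finite, so by Fubini there is a point $x_0$ with $f(x_0)$ finite and $y\mapsto f(y)-f(x_0)\in L^p(Z)$; since $\nu(Z)<\infty$ this gives $f\in L^p(Z)$, so $f_Z$ is well defined. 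Writing $f(x)-f_Z=\tfrac1{\nu(Z)}\int_Z (f(x)-f(y))\,d\nu(y)$ and applying Jensen's inequality to the inner average,
\[
\|f-f_Z\|_{L^p(Z)}^p\le\frac1{\nu(Z)}\int_Z\int_Z|f(x)-f(y)|^p\,d\nu(y)\,d\nu(x)\le D^{\theta p}\,\|f\|_{\theta,p}^p ,
\]
which is \eqref{eq:Lp-Bes} with $C=D^{\theta}$.

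For the remaining assertions, let $\sigma$ be the measure on $Z\times Z$ with density $d(x,y)^{-\theta p}\,\nu(B(x,d(x,y)))^{-1}$ with respect to $\nu\times\nu$, and consider the linear map $f\mapsto\Phi f$, $\Phi f(x,y):=f(x)-f(y)$. By definition $\|\Phi f\|_{L^p(\sigma)}=\|f\|_{\theta,p}$, so $\|\cdot\|_{\theta,p}$ is a seminorm on $B^\theta_{p,p}(Z)$ that vanishes exactly on $\nu$-a.e.\ constants: the density of $\sigma$ is positive and finite off the diagonal, so $\Phi f=0$ $\sigma$-a.e.\ forces $f(x)=f(y)$ for $\nu\times\nu$-a.e.\ $(x,y)$, whence $f$ is $\nu$-a.e.\ constant by Fubini. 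Thus $\Phi$ descends to a linear isometry of $(HB^\theta_{p,p}(Z),\|\cdot\|_{\theta,p})$ into $L^p(Z\times Z,\sigma)$. To see its image is closed it suffices to check that $HB^\theta_{p,p}(Z)$ is complete: given a $\|\cdot\|_{\theta,p}$-Cauchy sequence, represent it by functions $f_n$ with $(f_n)_Z=0$; by \eqref{eq:Lp-Bes} the $f_n$ form an $L^p(Z)$-Cauchy sequence, hence converge in $L^p(Z)$, and along a subsequence $\nu$-a.e., to some $f$, and two applications of Fatou's lemma give $\|f\|_{\theta,p}\le\liminf_n\|f_n\|_{\theta,p}<\infty$ and $\|f-f_n\|_{\theta,p}\le\liminf_m\|f_m-f_n\|_{\theta,p}\to0$. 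Hence $HB^\theta_{p,p}(Z)$ is isometrically isomorphic to a closed subspace of $L^p(Z\times Z,\sigma)$; since $1<p<\infty$ this space is reflexive, and for $p=2$ it is a Hilbert space, and closed subspaces inherit both properties.

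I do not anticipate a genuine obstacle here: the whole weight of the argument is the elementary kernel bound of the second paragraph. The only points needing care are the Fubini step promoting a finite Besov seminorm to $L^p$-membership, and, in the completeness argument, the necessity of normalizing the representatives by their averages before invoking \eqref{eq:Lp-Bes}, together with the two Fatou passages identifying the $L^p$-limit as the $\|\cdot\|_{\theta,p}$-limit.
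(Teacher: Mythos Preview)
Your proof is correct and follows essentially the same strategy as the paper: bound the Besov kernel from below using $\diam(Z)<\infty$ and $\nu(Z)<\infty$, deduce $f\in L^p(Z)$ via a Fubini/single-point argument, and establish completeness by normalizing a Cauchy sequence to have zero average and using \eqref{eq:Lp-Bes} to pass through $L^p(Z)$, while reflexivity and the Hilbert property are inherited from the isometric embedding $f\mapsto\big((x,y)\mapsto f(x)-f(y)\big)$ into a weighted $L^p$-space on $Z\times Z$. The only notable differences are cosmetic: you derive \eqref{eq:Lp-Bes} cleanly via Jensen's inequality (the paper proves $f\in L^p(Z)$ but leaves the quantitative Poincar\'e-type bound implicit), and in the completeness step you identify the limit via two Fatou passages rather than, as the paper does, by invoking completeness of $L^p(Z\times Z,\sigma)$ directly and matching the limit through a pointwise-convergent subsequence.
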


\begin{proof}
Since
\[
\|f\|^p_{\theta,p}=\int_Z\int_Z\frac{|f(y)-f(x)|^p}{\nu(B(y,d(x,y)))d(x,y)^{\theta p}}d\nu(y)d\nu(x)<\infty,
\]
there exists $x_0\in Z$ such that $|f(x_0)|<\infty$ and
\begin{align*}
C:=\int_Z\frac{|f(y)-f(x_0)|^p}{\nu(B(y,d(x_0,y)))d(x_0,y)^{\theta p}}d\nu(y)<\infty.
\end{align*}
Thus, we have that
\[\frac{1}{\nu(Z)\diam(Z)^{\theta p}}\int_Z|f(y)-f(x_0)|^pd\nu(y)\le C.
\]
Since $Z$ is bounded and $\nu(Z)<\infty$, it follows that
\begin{align*}
\int_Z|f(y)|^p&\le\int_Z\left(|f(y)-f(x_0)|+|f(x_0)|\right)^pd\nu(y)\\
	&\le 2^p\left(\int_Z|f(y)-f(x_0)|^pd\nu(y)+|f(x_0)|^p\nu(Z)\right)\\
	&\le 2^p\left(C\nu(Z)\diam(Z)^{\theta p}+|f(x_0)|^p\nu(Z)\right)<\infty. 
\end{align*}

To see that the homogeneous space is a Banach space, it suffices to show that it is complete under the given norm. Let
$\{u_k\}$ be a Cauchy sequence in $HB^\theta_{p,p}(Z)$. Replacing $u_k$ with $u_k-\jint_Zu_k\, d\nu$ if needed,
we may assume that $(u_k)_Z:=\int_Z u_k\, d\nu=0$ for each $k$. Then by~\eqref{eq:Lp-Bes} we know that
$\{u_k\}$ is a Cauchy sequence also in $L^p(Z)$, and hence converges to some function $u\in L^p(Z)$. By passing to a subsequence
if necessary, we may also assume that this convergence also occurs pointwise $\nu$-a.e.~in $Z$.
Note that $\Vert u_k\Vert_{\theta,p}=\Vert v_k\Vert_{L^p(Z\times Z,\nu_0)}$ where
$\nu_0$ is the weighted measure on $Z\times Z$ given by
\[
\nu_0(A)=\iint_A \ \frac{1}{d(x,y)^{\theta p+Q-1}}\, d\nu\times\nu(x,y),
\]
and
\[
v_k(x,y)=u_k(x)-u_k(y).
\]
It follows that $\{v_k\}$ is also a Cauchy sequence, and hence converges to some function $v\in L^p(Z\times Z,\nu_0)$.
By considering the corresponding subsequence and applying Fubini's theorem, we also know that
$v_k$ converges pointwise $\nu$-a.e.~in $Z$ to the function $(x,y)\mapsto u(x)-u(y)$. Hence we have the
desired conclusion that $\Vert u_k-u\Vert_{\theta,p}\to 0$ as $k\to \infty$ and $u\in HB^\theta_{p,p}(Z)$ as desired.
Since $1<p<\infty$, it is clear from the reflexivity of $L^p(Z\times Z,\nu_0)$ that $HB^\theta_{p,p}(Z)$ is also reflexive.

Since the norm on $HB^\theta_{2,2}(Z)$ is given via an inner product $\mathcal{E}_2$, and as the above argument tells
us that $HB^\theta_{2,2}(Z)$ is complete, we conclude that it is a Hilbert space.
\end{proof}

We will show below  that any doubling, compact metric measure space $(Z,d,\nu)$ arises as boundary
$Z=\partial\Om$, with $\Om$ a space satisfying the structure conditions (H0), (H1), (H2), and such that $\nu$
satisfies the comparison~\eqref{eq:Co-Dim-intro}.
Such co-dimension condition arises in the study of traces
of the $N^{1,p}$ and $D^{1,p}$--classes on $\Om$ to $\partial\Om$ (see for instance~\cite{BBS, M, MS}),
and is a natural condition that arises from considering
$\Om$ to be a uniformization of a hyperbolic filling of a doubling compact metric measure space as in~\cite{BBS}.
Besov spaces on $\partial\Om$ arise as trace classes of Newton-Sobolev spaces on $\Om$. The following result is
from~\cite{M}, but in the setting of $\Om$ such a trace result can also be found in~\cite{BBS}.

We equip the Besov space $B^\theta_{p,p}(\partial\Om)$ with the norm
\[
\Vert u\Vert_{B^\theta_{p,p}(\partial\Om)}:=\Vert u\Vert_{L^p(\partial\Om)}+\Vert u\Vert_{\theta,p}.
\]

\begin{thm}[{\cite[Theorem~1.1]{M}}] \label{thm:Maly-Trace}
Let $\Om$ be a John domain in $X$, with $\overline{\Om}$ compact such that the restriction of $\mu$ to $\overline{\Om}$
is doubling and supports a $p$-Poincar\'e inequality. Suppose in addition that $\partial\Om$ is equipped with a measure
$\nu$ that satisfies~\eqref{eq:Co-Dim-intro}. Then there is a bounded linear trace operator
$T:N^{1,p}(\Om)\to B^{1-\Theta/p}_{p,p}(\partial\Om)$
and a bounded linear extension operator $E:B^{1-\Theta/p}_{p,p}(\partial\Om)\to N^{1,p}(\Om)$ such that
\begin{enumerate}
\item[{\rm (i)}] $T\circ E u=u$ for $u\in B^{1-\Theta/p}_{p,p}(\partial\Om)$,
\item[{\rm (ii)}] for each $u\in N^{1,p}(\Om)$, for $\mathcal{H}$-a.e.~$x\in\partial\Om$ we have
\[
\lim_{r\to 0^+}\jint_{B(x,r)\cap\Om}|u-Tu(x)|\, d\mu=0.
\]
\end{enumerate}
\end{thm}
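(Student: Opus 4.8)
The plan is to construct the operators $T$ and $E$ explicitly from the doubling and $p$-Poincar\'e structure of $\overline{\Om}$ together with the John geometry of $\Om$, and then verify the mapping properties and the identity $T\circ E=\mathrm{id}$; this is exactly the scheme carried out in~\cite{M}, and we only outline it. For $u\in N^{1,p}(\Om)$ and $x\in\dOm$ one defines $Tu(x):=\lim_{r\to 0^+}\jint_{B(x,r)\cap\Om}u\,d\mu$ whenever this limit exists. To see that it exists for $\Hcal$-a.e.\ $x$, one first notes that the John condition forces $\mu(B(x,r)\cap\Om)$ to be comparable to $\mu(B(x,r))$ (there is a carrot of definite measure landing at $x$), so the $p$-Poincar\'e inequality on $\overline{\Om}$ localizes to the half-balls $B(x,2^{-k}r)\cap\Om$ and yields a telescoping bound of the form $|u_{B(x,2^{-k}r)\cap\Om}-u_{B(x,2^{-k-1}r)\cap\Om}|\lesssim 2^{-k}r\,(\jint_{B(x,\lambda 2^{-k}r)\cap\Om}g_u^p\,d\mu)^{1/p}$. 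Summing in $k$, estimating the series by a restricted maximal function of $g_u^p$, and running a covering argument shows the averages form a Cauchy sequence at $\Hcal$-a.e.\ $x$ and give the Lebesgue-point statement~(ii).

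To estimate $\Vert Tu\Vert_{\theta,p}$ with $\theta=1-\Theta/p$, one fixes $x,y\in\dOm$ and bounds $|Tu(x)-Tu(y)|$. Here lies the principal difficulty: unlike in a uniform domain, a John domain need not contain a curve joining $x$ to $y$ that stays uniformly close to the boundary, so one cannot simply integrate $g_u$ along such a curve. Instead one chains: connect $B(x,d(x,y))\cap\Om$ and $B(y,d(x,y))\cap\Om$ each to a common ``central'' ball along the John curves issuing from the John center, obtaining a chain of balls $\{B_j\}$ of controlled radii and bounded overlap, and telescopes, so that $|Tu(x)-Tu(y)|\lesssim\sum_j\rad(B_j)\,(\jint_{CB_j}g_u^p\,d\mu)^{1/p}$. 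Raising to the $p$-th power, integrating against the Besov kernel $d(x,y)^{-\theta p}\nu(B(x,d(x,y)))^{-1}$ over $\dOm\times\dOm$, and interchanging summation and integration (Fubini), each interior ball $W$ is charged only by boundary pairs whose chain meets it; the co-dimension relation~\eqref{eq:Co-Dim-intro}, $\nu(B(\cdot,r))\approx r^{-\Theta}\mu(B(\cdot,r)\cap\Om)$, is precisely what is needed for the accumulated weight of $W$ to collapse to a constant multiple of $\mu(W)$ once the factors $r^{\theta p}=r^{p-\Theta}$ from the kernel and $r^{p}$ from the Poincar\'e radius are balanced against $r^{-\Theta}$ and the doubling volume ratios. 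This gives $\Vert Tu\Vert_{\theta,p}^p\lesssim\int_\Om g_u^p\,d\mu$, and combined with the $L^p$ bound, which follows from~(ii) together with H\"older's inequality on the fixed ball $B(x_0,2\diam\Om)$, one obtains boundedness of $T$.

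For the extension, fix a Whitney-type covering $\{W_i=B(p_i,r_i)\}$ of $\Om$ with $r_i\approx\dist(p_i,\dOm)$ and bounded overlap, a subordinate Lipschitz partition of unity $\{\pip_i\}$, and for each $i$ a point $x_i\in\dOm$ with $d(p_i,x_i)\approx r_i$; put $B_i=B(x_i,r_i)\cap\dOm$ and set $Ef:=\sum_i\pip_i\,f_{B_i}$ with $f_{B_i}=\jint_{B_i}f\,d\nu$. On $W_i$ an upper gradient of $Ef$ is $\lesssim r_i^{-1}\sum_j|f_{B_i}-f_{B_j}|$, the sum over the finitely many $j$ with $2W_j\cap 2W_i\neq\emptyset$; estimating each difference by a telescoping sum of averages of $f$ over dyadic boundary balls and using the co-dimension relation to rewrite $r_i^{-p}\mu(W_i)$ as a constant times $r_i^{-\theta p}\nu(B_i)$ — exactly the weight appearing in the double integral defining $\Vert f\Vert_{\theta,p}$ — one gets $\Vert Ef\Vert_{N^{1,p}(\Om)}\lesssim\Vert f\Vert_{B^\theta_{p,p}(\dOm)}$. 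Finally $T\circ Ef=f$ $\nu$-a.e.: at a $\nu$-Lebesgue point $x$ of $f$ (which is $\nu$-a.e.\ point, since $\nu$ is doubling by~(H1)--(H2)), the average $\jint_{B(x,r)\cap\Om}Ef\,d\mu$ is a convex combination of the numbers $f_{B_i}$ with $B_i\subset B(x,Cr)\cap\dOm$, hence tends to $f(x)$ as $r\to0$, and by definition this limit is $TEf(x)$, giving~(i). The main obstacle throughout is the chaining step that replaces the boundary-to-boundary curves of a uniform domain by a quantitatively controlled Whitney chain, and the bookkeeping in the Fubini argument that makes the co-dimension exponent $\Theta$ produce precisely the Besov smoothness $\theta=1-\Theta/p$; the full details are in~\cite{M}.
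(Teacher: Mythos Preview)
The paper does not prove this statement at all: Theorem~\ref{thm:Maly-Trace} is quoted verbatim from~\cite[Theorem~1.1]{M} and used as a black box, with no proof or sketch supplied. Your proposal is therefore not in conflict with anything in the paper; you have simply written out an outline of the argument in~\cite{M}, and you explicitly say so at both the beginning and the end. As an outline of Mal\'y's proof your sketch is accurate in its broad strokes (trace via limits of averages, existence via telescoping and a maximal-function bound, Besov estimate via John chaining and the co-dimension relation, extension via a Whitney partition of unity with boundary averages, and $T\circ E=\mathrm{id}$ via the Lebesgue differentiation theorem for the doubling measure~$\nu$), so there is no gap to flag and no genuine methodological difference to discuss.
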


The domains $\Om$ considered in this paper satisfy the hypotheses given in Theorem~\ref{thm:Maly-Trace}.
Note that if $\Om$ is a uniform domain in $X$ and $(X,d,\mu)$ is doubling and supports a $p$-Poincar\'e inequality, then
the restrictions of $\mu$ to $\Om$ and to $\overline{\Om}$ are doubling and support a $p$-Poincar\'e inequality, see
for example~\cite[Theorem~4.4]{BjS}.

\subsection{Cheeger differential structures}

We now describe Cheeger (linear) differential structures in doubling metric measure spaces
$(X,d,\mu)$; these structures play a key role in the definition of Cheeger $p$-harmonic functions.

We say that a metric measure space $(X,d,\mu)$ supports a Cheeger differential structure if there is a positive integer
$N$ and a collection
$\{X_\alpha\}_{\alpha\in A}$, with each $X_\alpha$ a measurable subset of $X$, such that $\mu(X_\alpha)>0$ and
$\mu(X\setminus\bigcup_\alpha X_\alpha)=0$, an inner-product structure $\langle\cdot,\cdot\rangle_x$, $x\in X_\alpha$,
on $\R^n$ that is $\mu$-measurable,
and for each $\alpha\in A$ a Lipschitz map $\pip_\alpha:X_\alpha\to\R^N$ satisfying the
following condition for each Lipschitz function $f:X\to\R$: for $\mu$-a.e.~$x\in X_\alpha$ there is a vector
$\nabla f(x)\in \R^N$ such that
\[
\text{ess lim sup}_{X_\alpha\ni y\to x}\ \frac{|f(y)-f(x)-\langle \nabla f(x),\pip_\alpha(y)-\pip_\alpha(x)\rangle_x|}{d(y,x)}=0.
\]
For doubling spaces we can assume that $A$ is a countable set and that the collection $(X_\alpha)_{\alpha\in A}$ is
pairwise disjoint.
For doubling metric measure spaces $X$ supporting a $p$-Poincar\'e inequality,
such a differential structure was constructed by Cheeger in~\cite{Che}; the structure
constructed there satisfies the additional property that there is a constant $C\ge 1$ such that whenever $g_f$ is a minimal
$p$-weak upper gradient of a Lipschitz function $f$, then $C^{-1}g_u(x)\le \langle \nabla f(x),\nabla f(x)\rangle_x\le C\, g_u(x)$ for
$\mu$-a.e.~$x\in X$. In this paper we will consider such a differential structure. From the discussion in~\cite{Che} we know
that the notion of $\nabla f$ extends from the class of locally Lipschitz functions in $X$ to functions in
$D^{1,p}(X)$; see also the discussion in~\cite{FHK}. We set $|\nabla f(x)|=\langle \nabla f(x),\nabla f(x)\rangle_x^{1/2}$.

We note that there is more than one possible Cheeger differential structure on $X$, leading to us considering a wide range
of differential operators, one for each such structure. We say that a function $u\in D^{1,p}(\Om)$ is a \emph{Cheeger $p$-harmonic
function} in $\Om$ if, whenever $v\in D^{1,p}(\Om)$ has compact support in $\Om$, we have
\[
\int_{\text{supt}(v)}|\nabla u|^p\, d\mu\le \int_{\text{supt}(v)}|\nabla (u+v)|^p\, d\mu.
\]
Equivalently, we have the following corresponding Euler-Lagrange equation:
\[
\int_\Om|\nabla u(x)|^{p-2}\langle \nabla u(x),\nabla v(x)\rangle_x\, d\mu(x)=0.
\]
For brevity, in our exposition we will suppress the dependence of $x$ on the inner product structure, and denote
\[
\langle \nabla u(x),\nabla v(x)\rangle_x=\nabla u(x)\cdot\nabla v(x)
\]
when this will not lead to confusion.
Cheeger $p$-harmonic functions are quasiminimizers in the sense of Giaquinta, and hence
we can avail ourselves of the properties derived in~\cite{KS}.

\subsection{Neumann boundary value problem}

In~\cite{MS} a generalization of the Neumann boundary value problem
\begin{align*}
\Delta_p u=0 &\text{ on }\Om,\\
 |\nabla u|^{p-2}\partial_\eta u=f &\text{ on }\partial\Om
\end{align*}
was constructed and analyzed. There 
it was assumed that the boundary data $f$ is a bounded measurable
function on $\partial\Om$ such that $\int_{\partial\Om} f\, d\nu=0$.
Strictly speaking, the version considered in~\cite{MS} is the problem of minimzing the energy operator
\[
N^{1,p}(\Om)\ni v\mapsto I_f(v):=\int_{\overline{\Om}} g_v^p\, d\mu-p\int_{\partial{\Om}}vf\, d\nu,
\]
with $g_v$ the minimal $p$-weak upper gradient of $v$ and $f\in L^\infty(\overline{\Om}, \nu)$ in addition.
However, the proofs given in~\cite{MS}
are robust and apply also to the Cheeger differential formulation considered in the current paper, and we will use the results
from~\cite{MS} here, namely the boundedness property of solutions to the Neumann problem.
Existence of solutions was established in~\cite{MS},
and it was also shown that solutions are bounded.
In Appendix~\ref{Sec:Append}, we show that the boundedness
of the Neumann data $f$ is not needed in order to obtain existence and boundedness of solutions.
In fact, we have the following result as a consequence of the discussion in
Lemma~\ref{lem:bdd-MalySh} from Section~\ref{Sec:Append} together with the results from~\cite{MS} (for existence
of the minimizers).

\begin{thm}\label{th:Neumann-reg}
Let $\Om$ satisfy the structural assumptions~(H0), (H1), and~(H2). Then for each $f\in L^{p'}(\partial\Om)$
with $\int_{\partial\Om}f\, d\nu=0$ there is a bounded function $u\in N^{1,p}(\Om)$ such that
whenever $v\in N^{1,p}(\Om)$, we have
\[
\int_\Om|\nabla u|^p\, d\mu-p\int_{\partial\Om}u\, f\, d\nu
   \le \int_\Om|\nabla v|^p\, d\mu-p\int_{\partial\Om}v\, f\, d\nu.
\]
Moreover, 
we can choose $u$ so that $\int_\Om u\, d\mu=0$.
\end{thm}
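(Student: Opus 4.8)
The plan is to obtain existence of a minimizer by the direct method in the calculus of variations, applied to the functional $v\mapsto \int_\Om|\nabla v|^p\,d\mu - p\int_{\partial\Om}vf\,d\nu$ on $N^{1,p}(\Om)=N^{1,p}(\overline\Om)$, and then to upgrade to boundedness by invoking Lemma~\ref{lem:bdd-MalySh}. First I would normalize: since the functional is invariant under adding constants to $v$ (because $\int_{\partial\Om}f\,d\nu=0$), it suffices to minimize over the closed subspace $\{v\in N^{1,p}(\overline\Om):\int_\Om v\,d\mu=0\}$, which simultaneously produces the final normalization claim $\int_\Om u\,d\mu=0$. On this subspace the Poincar\'e inequality from (H1), together with the John condition (H0) giving the bounded trace operator $T:N^{1,p}(\Om)\to B^{1-\Theta/p}_{p,p}(\partial\Om)$ of Theorem~\ref{thm:Maly-Trace}, controls $\|v\|_{L^p(\Om)}$ and $\|Tv\|_{L^p(\partial\Om)}$ by $\mathcal E_p(v)=\|\nabla v\|_{L^p(\Om)}$. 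Hence the linear term $p\int_{\partial\Om}vf\,d\nu$ is bounded by $p\|f\|_{L^{p'}(\partial\Om)}\|Tv\|_{L^p(\partial\Om)}\le C\|f\|_{L^{p'}(\partial\Om)}\,\mathcal E_p(v)$, and since $1<p<\infty$ Young's inequality absorbs this into $\tfrac12\mathcal E_p(v)^p$ plus a constant; this gives coercivity of the functional on the normalized subspace and shows the infimum is finite.

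Next I would take a minimizing sequence $(v_j)$; coercivity bounds $\mathcal E_p(v_j)$, hence (by the above) $\|v_j\|_{N^{1,p}(\overline\Om)}$, uniformly. Using reflexivity of $N^{1,p}$ for $1<p<\infty$ (the minimal $p$-weak upper gradients, or equivalently the Cheeger gradients, give a uniformly convex structure), pass to a subsequence with $v_j\rightharpoonup u$ weakly in $N^{1,p}$ and $\nabla v_j\rightharpoonup \nabla u$ weakly in $L^p(\Om)$; by Rellich-type compactness for the trace operator (or simply weak continuity of the bounded linear map $T$ composed with compact embedding $B^{1-\Theta/p}_{p,p}(\partial\Om)\hookrightarrow L^p(\partial\Om)$, which holds since $\nu(\partial\Om)<\infty$ and the Besov embedding of Lemma~\ref{lem:Lp-BesovE} is compact) we get $Tv_j\to Tu$ strongly in $L^p(\partial\Om)$, so the linear term converges. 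The gradient term $v\mapsto\int_\Om|\nabla v|^p\,d\mu=\mathcal E_p(v)^p$ is weakly lower semicontinuous because $\|\cdot\|_{L^p}$ is (and, as noted in the Cheeger subsection, the Cheeger gradient norm is comparable to the minimal $p$-weak upper gradient, so either formulation works). Combining, $I_f(u)\le\liminf_j I_f(v_j)=\inf I_f$, so $u$ is a minimizer; it lies in the normalized subspace because $v\mapsto\int_\Om v\,d\mu$ is weakly continuous, giving $\int_\Om u\,d\mu=0$. The stated minimization inequality against arbitrary $v\in N^{1,p}(\Om)$ then follows from constant-invariance: replace $v$ by $v-\tfrac1{\mu(\Om)}\int_\Om v\,d\mu$, which does not change the value of $I_f$.

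Finally, for boundedness I would appeal directly to Lemma~\ref{lem:bdd-MalySh} of Section~\ref{Sec:Append}: the point of that appendix, as the text explains, is precisely to remove the $L^\infty(\partial\Om)$ hypothesis on $f$ that was present in~\cite{MS}, replacing it with $f\in L^{p'}(\partial\Om)$, and to deduce $u\in L^\infty(\Om)$ by a De Giorgi / Moser iteration that uses the codimension estimate (H2) — which, via the footnote-type bound $\int_{B(\zeta,r)}|\nabla\eta_\eps|\,d\mu\lesssim\eps^{-1+\Theta}\nu(B(\zeta,2r))$, controls the boundary contribution of $f$ on the level sets of $u$ — together with the Sobolev inequality furnished by (H1). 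I expect the one genuinely substantive step to be this boundedness input (it is the content of Lemma~\ref{lem:bdd-MalySh}, carried out in the appendix); the existence argument itself is the standard direct method and its only delicate ingredient is the compactness of the trace into $L^p(\partial\Om)$, which is already packaged for us in Theorem~\ref{thm:Maly-Trace} and Lemma~\ref{lem:Lp-BesovE}.
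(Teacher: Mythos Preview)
Your proposal is correct and matches the paper's approach: the paper does not give a self-contained proof here but explicitly defers existence to the direct-method argument of~\cite{MS} (noting in the appendix that ``Up to Remark~5.9 of~\cite{MS} holds without any change even if we only assume that $f\in L^{p'}(\partial\Omega)$''), and obtains boundedness from Lemma~\ref{lem:bdd-MalySh}. One small correction: Lemma~\ref{lem:Lp-BesovE} gives only a bounded (not compact) embedding $B^{1-\Theta/p}_{p,p}(\partial\Om)\hookrightarrow L^p(\partial\Om)$, but you do not need compactness anyway---the map $v\mapsto\int_{\partial\Om}(Tv)\,f\,d\nu$ is a bounded linear functional on $N^{1,p}(\overline\Om)$ and hence is weakly continuous, which is all your lower-semicontinuity step requires.
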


 \subsection{Hyperbolic fillings}\label{sub:hyp-fill}

 In this subsection we give a brief description of the hyperbolic filling of a compact doubling metric measure space
 as given in~\cite{BBS}. While this construction differs somewhat from the constructions given in earlier literature,
 in essence the metric portion of the construction is similar to that of~\cite{BSak, BSakSou, BSch, BourPaj}.

 With $(Z,d)$ a compact metric space equipped with a doubling measure $\nu$, we fix $\alpha>1$, $\tau>1$, and, for
 each non-negative integer $n$ we choose a maximal $\alpha^{-n}$-separated set $S_n\subset Z$. By scaling the metric
 if need be, we can always assume that the diameter of $Z$ is smaller than $1$; hence $S_0$ contains only one point $x_0$.
 We can also ensure that $S_n\subset S_{n+1}$, and consider the vertex set $V=\bigcup_{n=0}^\infty S_n\times\{n\}$.
 Two points $(x,n)$ and $(y,m)$ are declared to be neighbors if either $B_Z(x,\alpha^{-n})$ intersects $B_Z(y,\alpha^{-m})$
 with $|n-m|=1$, or if $B_Z(x,\tau\alpha^{-n})\cap B_Z(y,\tau\alpha^{-m})$ is non-empty with $n=m$; here $B_Z$ denotes
a ball in the metric space $Z$. This converts $V$ into
 a graph $X$, with each edge assigned a unit length interval. As shown in~\cite{BBS}, the graph $X$, equipped with the path
 metric, is a roughly star-like Gromov-hyperbolic space, which, when uniformized via the metric
 \[
 d_\eps(v,w)=\inf_\gamma\int_\gamma e^{-\eps d(\gamma(t), v_0)}\, ds(t), \ v,w\in X,
 \]
where $v_0=(x_0,0)$ and the infimum is over all curves in $X$ with end points $v,w$, turns the metric
graph $(X,d)$ into a uniform domain $X_\eps:=(X,d_\eps)$ when $\eps=\log(\alpha)$.
The metric graph has a natural measure on it, given by considering the one-dimensional Hausdorff measure $\mathcal{H}^1$ on the
edges of the graph. Next, following~\cite{BBS}, for
each $\beta>0$, one can lift the measure
$\nu$ on $Z$  up to a measure $\mu_\beta$ in $X$ by setting for each Borel set $A\subset X$,
\[
\mu_\beta(A):=\int_A e^{-\beta d(x,v_0)} \nu(B_Z(\zeta_x, \alpha^{-n_x}))\, d\mathcal{H}^1(x),
\]
where $(\zeta_x, n_x)\in V$ is a nearest vertex to the point $x\in X$. It is shown that with $\eps=\log(\alpha)$ and
$\beta>0$, the metric measure space $(X,d_\eps,\mu_\beta)$ is doubling and supports a $1$-Poincar\'e inequality; moreover,
$Z$ is biLipschitz to the boundary $\partial_\eps X$ of the uniform domain $(X,d_\eps)$, and in addition,
$\Om:=X$ equipped with the metric $d_\eps$ and measure $\mu_\beta$ satisfies our structural
conditions~(H0), (H1), and~(H2) with $\Theta=\beta/\eps$. As shown in~\cite{BBS}  the trace class of the Sobolev
space $N^{1,p}(X,d_\eps,\mu_\beta)$ is the Besov class $B^\theta_{p,p}(Z)$, where $\theta=1-\beta/(\eps p)=1-\Theta/p$.

Thus, with our primary object $(Z,d,\nu)$, for each $1<p<\infty$ and $0<\theta<1$, we can choose $\beta=p\eps(1-\theta)$
and $\eps=\log(\alpha)$
in the above construction to obtain a uniform domain $\Om=(X,d_\eps)$ equipped with the measure $\mu_\beta$
that satisfies our structural assumptions and yields the fractional Laplacian $(-\Delta_p)^\theta$.

\section{Equivalent Formulations of the Neumann problem for the $p$-Laplacian}\label{Sect:equiv-form}

In this section we prove Theorem~\ref{thm:equiv-intro}.
Formulation~(a), when seen as a problem on the
independent metric measure space $\overline{\Omega}$
corresponds to the inhomogeneous problem $-\Delta_p u=\nu$ with $\nu$ a (signed) Radon measure on the space of
interest such that $\nu$ is in the dual of the Sobolev space $W^{1,p}$; see for example~\cite{KM,Mi, RZ, Z}.
The formulation~(b) (with $|\nabla v|$ and with $|\nabla v|$ replaced by the minimal $p$-weak upper gradient $g_v$)
was considered in~\cite{MS}. Formulation~(c) was motivated by the study of nonlocal minimization problems considered
in~\cite{CS} (Euclidean setting), \cite{FF} (Carnot group setting), \cite{BGS, CG} (manifold setting with $\gamma=1/2$)
and~\cite{EbGKSS} (metric spaces of controlled geometry).

\begin{proof}[Proof of Theorem~\ref{thm:equiv-intro}]
We first show that~(a) is equivalent to~(b). To do so, first suppose that~(b) holds for $u$, and let $\phi\in N^{1,p}(\overline{\Omega})$.
Then setting for each $\eps\in\R$,
\[
I(\eps):=\int_\Omega|\nabla (u+\eps \phi)|^p\, d\mu-p\int_{\partial{\Omega}}(u+\eps \phi)\, f\, d\nu,
\]
we know that $I(\eps)$ has a minimum at $\eps=0$. Therefore $\tfrac{d}{d\eps}I\vert_{\eps=0}=0$.
Note that
\[
\frac{d}{d\eps}I(\eps)=\frac{p}{2}\int_\Omega |\nabla (u+\eps \phi)|^{p-2}\, 2\, \nabla(u+\eps\phi)\cdot\nabla \phi\, d\mu
  -p\int_{\partial{\Omega}}\phi\, f\, d\nu.
\]
Letting $\eps=0$ gives
\[
\frac{d}{d\eps}I(\eps)\vert_{\eps=0}\,
=p\, \int_\Omega|\nabla u|^{p-2}\nabla u\cdot\nabla \phi\, d\mu-p\int_{\partial{\Omega}}\, \phi\, f\, d\nu.
\]
Setting this equal to zero yields~(a).

Next, suppose that~(a) holds for $u$. We will use the following convexity of $|\cdot|^p$ for vectors when $p>1$: when $\xi,\eta\in\R^N$,
and $\langle\cdot,\cdot\rangle$ is an inner product on $\R^N$, we have
\[
 |\xi|^p-|\eta|^p\ge p|\eta|^{p-2}\langle\eta,\xi-\eta\rangle
\]
where $|\cdot|$ is the norm corresponding to this inner product. Therefore when $v\in N^{1,p}(\overline{\Omega})$,
\[
\int_\Omega\left(|\nabla v|^p-|\nabla u|^p\right)\, d\mu\ge p\int_\Omega|\nabla u|^{p-2}\nabla u\cdot\nabla (v-u)\, d\mu.
\]
Applying~(a) to the function $\phi=v-u$ gives the desired inequality $I(u)\le I(v)$.

It now only remains to show that~(a)+(b) is equivalent to~(c). This is a Morrey-type argument.
For each $\eps>0$ let $\eta_\eps$ be as in the statement of~(c).
Suppose that $u$ satisfies~(a). Let $\phi$ be a Lipschitz function on $\overline{\Omega}$. Then by~(a) we have that
\[
\int_{\partial\Omega}\phi\, f\, d\nu=\int_\Omega|\nabla u|^{p-2}\nabla u\cdot\nabla \phi\, d\mu
  =\int_\Omega|\nabla u|^{p-2}\nabla u\cdot\nabla((1-\eta_\eps)\phi)\, d\mu,
\]
where the latter equality follows from the fact that $\eta_\eps\phi=0$ on $\partial\Omega$ and by~(a) applied to $\eta_\eps\phi$.
Note that the Cheeger differential structure also follows the Leibniz rule
(see for example~\cite[(4.43)]{Che} or~\cite{G}), and so
$\nabla(1-\eta_\eps)\phi=(1-\eta_\eps)\nabla\phi-\phi\,\nabla\eta_\eps$. Since $|\nabla u|\in L^p(\Omega)$, we have that
\[
\lim_{\eps\to 0^+}\int_\Omega (1-\eta_\eps)\, |\nabla u|^{p-2}\nabla u\cdot\nabla\phi\, d\mu=0.
\]
It follows that
\[
\int_{\partial\Omega}\phi\, f\, d\nu=-\lim_{\eps\to 0^+}\int_\Omega\phi\, |\nabla u|^{p-2}\nabla u\cdot\nabla\eta_\eps\, d\mu.
\]
As the above holds for all Lipschitz $\phi$ on $\overline{\Omega}$, the claim~(c) follows.

Finally, suppose that $u$ satisfies~(c). The above argument with $\phi=\eta_\eps\phi+(1-\eta_\eps)\phi$ gives that
\[
\int_\Omega|\nabla u|^{p-2}\nabla u\cdot\nabla(\eta_\eps \phi)\, d\mu=0
\]
because of the $p$-harmonicity of $u$ in $\Omega$, and so for Lipschitz $\phi$ on $\overline{\Omega}$,
\[
\int_\Omega|\nabla u|^{p-2}\nabla u\cdot\nabla \phi\, d\mu=-\lim_{\eps\to 0^+}\int_\Omega\phi\, |\nabla u|^{p-2}\nabla u\cdot\nabla\eta_\eps\, d\mu
   =\int_{\partial\Omega}\phi\, f\, d\nu,
\]
that is, (a) holds true for Lipschitz $\phi$. Now, if $v\in N^{1,p}(\overline{\Omega})$, then by the fact that
$\overline{\Omega}$ supports a $p$-Poincar\'e inequality, there is a sequence of Lipschitz functions $\phi_k$ on
$\overline{\Omega}$ such that $\phi_k\to v$ in $N^{1,p}(\overline{\Omega})$, see~\cite[Theorem~8.2.1]{HKST}.
Then as $\nabla \phi_k\to \nabla v$ in $L^p(\Omega;\R^N)$, we have
\[
\lim_{k\to\infty}\int_\Omega|\nabla u|^{p-2}\nabla u\cdot\nabla \phi_k\, d\mu
=\int_\Omega|\nabla u|^{p-2}\nabla u\cdot\nabla v\, d\mu.
\]
Moreover, as $\phi_k\to v$ in $N^{1,p}(\overline{\Omega})$ and trace
$T:N^{1.p}(\overline{\Omega})\to B^\theta_{p,p}(\partial\Omega)\subset L^p(\partial\Omega,\, \nu)$ is a bounded
linear map with $f$ a bounded Borel function on $\partial \Omega$, we obtain also that
\[
\lim_{k\to\infty}\int_{\partial\Omega}\phi_k\, f\, d\nu=\int_{\partial\Omega}v\, f\, d\nu.
\]
Thus~(a) follows for \emph{all} $v\in N^{1,p}(\overline{\Omega})$.
\end{proof}

\section{Boundary Regularity for $1<p\le Q$}\label{Sect:boundary-reg}

The goal of this section is to prove Theorem~\ref{thm:holder-intro}.
We
will only consider the range $1<p\le Q$, since in the range $p>Q$  we already know that $u$ is $1-Q/p$-H\"older continuous on
$\overline{\Omega}$ in view of  the
Morrey embedding theorem, see~\cite[Lemma~9.2.12]{HKST} for instance.

Let $(\overline{\Omega}, d, \mu)$
be a metric measure space satisfying  the structural assumptions (H1), (H2) in the introduction.
In the following, for $x\in \overline\Omega$ and $r>0$ the balls in the induced metric are $B(x,r)=\{y\in \overline \Omega\ |\  d(y,x)<r\}$.
We recall from Subsection~\ref{subsec2.4} that in view of \cite[Theorem~1.3]{M}, there exists a bounded linear trace operator
$T:N^{1,p}(\Omega)\to B^{1-1/p}_{p,p}(\partial\Omega)\subset L^p(\partial\Omega).$
For ease of notation, we will denote $Tu$, which is a function on $\partial\Omega$, also by $u$.
We recall from the discussion at the end of Subsection~\ref{subsect:2.2} that
because $(\overline{\Om},d,\mu\vert_\Om)$ supports a $p$-Poincar\'e inequality and $\mu\vert_\Om$ is doubling, it follows that
the extension of $u$ by $Tu$ to $\partial\Om$ is in $N^{1,p}(\overline{\Om})$,
see for example~\cite{KL} or~\cite[Theorem~9.2.8]{HKST}.

Let $f:\partial\Omega\to\R$ be bounded and Borel measurable, with $\int_{\partial\Omega}f\,d\nu =0$
and $f\in L^{p'}(\partial\Om)$.
Suppose that $u\in N^{1,p}(\overline{\Omega})$ is  $p$-harmonic
with Neumann boundary conditions; that is, for all $\phi\in N^{1,p}(\overline{\Omega})$,
\[
\int_{\Omega}|\nabla u|^{p-2}\nabla u\cdot\nabla\phi\,d\mu=\int_{\partial\Omega}\phi f\,d\nu.
\]
Then by~\cite{MS} or by Theorem~\ref{thm:equiv-intro}, $u$ is a minimizer of
$\int_{\Omega}|\nabla u|^{p}\,d\mu-p\int_{\partial\Omega}u\,  f\,d\nu$.
We also require that $\int_\Om u\, d\mu=0$. This is always possible by subtracting a constant from any given
solution. Note that as we deal with a choice of Cheeger differential structure, such a solution is unique,
see~\cite[Lemma~4.5 and the subsequent comment]{MS}.

For $x\in \overline \Om$ and $r>0$ the   boundary of $B(x,r)$ relative to the topology induced by the metric
on $\overline \Om$, is $\partial B(x,r)\subset \{y\in \Omega \ | d(x,y)=r\}$.

We begin by recalling the notion of $p$-harmonic extension and an immediate application
of some interior H\"older estimates established in \cite[Theorem~5.2]{KS}.

\begin{prop}\label{prop:KS}
Let $u\in N^{1,p}(\overline \Om)$, and $x,r$ as above. There exists a unique function $v\in N^{1,p}(\overline{\Om})$ such that
$v$ is $p$-harmonic in $B(x,r)$ and $v=u$ on $\overline{\Om}\setminus B(x,r)$.
Moreover such $v$ is H\"older continuous in $B(x,r/2)$,
and it satisfies the estimate
\[
\sup_{B(x,r/2)} |v-v(x)|\le C\|u\|_{L^\infty}\, r^{\alpha}
\]
for some constants $C,\alpha$ depending only on the structure conditions of $\Om$.
\end{prop}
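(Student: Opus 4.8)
The plan is to work with $\overline{\Omega}$ itself as the ambient space: by (H1) and the discussion closing Subsection~\ref{subsect:2.2}, $(\overline{\Omega},d,\mu)$ is doubling, supports a $p$-Poincar\'e inequality, and $N^{1,p}(\Omega)=N^{1,p}(\overline{\Omega})$, so the variational theory of Cheeger $p$-harmonic functions applies with the metric ball $B=B(x,r)\subset\overline{\Omega}$ playing the role of the domain. First I would obtain $v$ as a minimizer of the Cheeger energy $w\mapsto\int_{B}|\nabla w|^{p}\,d\mu$ over the Dirichlet class $\mathcal{D}_u:=\{w\in N^{1,p}(\overline{\Omega}):w-u\in N^{1,p}_{0}(B)\}$, which is nonempty since $u\in\mathcal{D}_u$. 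The Poincar\'e inequality gives coercivity, the energy is lower semicontinuous under weak $N^{1,p}$-convergence, and $N^{1,p}$ is reflexive for $1<p<\infty$; thus the direct method yields a minimizer $v$, which satisfies $v=u$ on $\overline{\Omega}\setminus B$ by construction and, upon taking the first variation along $v+t\varphi$ with $\varphi\in N^{1,p}_0(B)$, solves the Cheeger Euler--Lagrange equation $\int|\nabla v|^{p-2}\nabla v\cdot\nabla\varphi\,d\mu=0$; that is, $v$ is Cheeger $p$-harmonic in $B$.

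For uniqueness, observe that any $p$-harmonic $w$ in $B$ with $w-u\in N^{1,p}_0(B)$ is in fact a minimizer over $\mathcal{D}_u$, by integrating the convexity inequality $|\xi|^{p}-|\eta|^{p}\ge p\,|\eta|^{p-2}\langle\eta,\xi-\eta\rangle$ against $\varphi=w-v\in N^{1,p}_0(B)$ --- exactly the (a)$\Rightarrow$(b) argument in the proof of Theorem~\ref{thm:equiv-intro}. Since the Cheeger density $\xi\mapsto|\xi|^{p}$ is \emph{strictly} convex (it is built from a genuine inner product) and $\mu(\overline{\Omega}\setminus B)>0$ for the relevant $r$, the class $\mathcal{D}_u$ contains no two functions differing by a nonzero constant, so the minimizer, hence the $p$-harmonic extension, is unique; this is the point recorded in \cite[Lemma~4.5]{MS}. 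Applying the same minimality to the competitors $\min\{v,M\}$ and $\max\{v,m\}$, where $M:=\mathrm{ess\,sup}_{\overline{\Omega}\setminus B}u$ and $m:=\mathrm{ess\,inf}_{\overline{\Omega}\setminus B}u$ (both lie in $\mathcal{D}_u$ and have pointwise no larger Cheeger gradient than $v$), forces $m\le v\le M$, whence $\|v\|_{L^{\infty}(\overline{\Omega})}\le\|u\|_{L^{\infty}(\overline{\Omega})}$.

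It remains to prove the H\"older bound. Since Cheeger $p$-harmonic functions are quasiminimizers in the sense of Giaquinta, $v$ is a quasiminimizer in $B$ with constants controlled by the structural data, and \cite[Theorem~5.2]{KS} then yields that $v$ is locally H\"older continuous in $B(x,r/2)$ together with the associated scale-invariant oscillation estimate; combining this with $\sup_{B}|v|\le\|u\|_{L^{\infty}}$ from the previous paragraph gives $\sup_{B(x,r/2)}|v-v(x)|\le C\,\|u\|_{L^{\infty}}\,r^{\alpha}$ with $C,\alpha$ depending only on the structural constants. I expect the one genuinely substantive point to be the uniqueness of $v$ (equivalently, that $v$ solves an Euler--Lagrange equation at all): both fail in general for the upper-gradient energy $\int g_v^{p}\,d\mu$ and depend on working with the strictly convex Cheeger energy, which is why the proposition and this whole section are framed through the Cheeger differential structure. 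The remaining ingredients --- the direct method, truncation comparison, and the De Giorgi--Nash--Moser regularity of \cite{KS} --- are by now standard in the metric-space setting.
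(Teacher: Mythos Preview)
Your argument is correct and is exactly the standard route one takes to establish this result; there is no gap. Note, however, that the paper does not actually supply a proof of Proposition~\ref{prop:KS}: it is stated as a direct consequence of the existence/uniqueness theory for Cheeger $p$-harmonic Dirichlet problems together with the interior regularity theorem~\cite[Theorem~5.2]{KS}, and the subsequent Remark explains why those ``interior'' estimates apply all the way up to $\partial\Omega\cap B(x,r)$ when one works in the ambient space $\overline{\Omega}$. Your write-up makes precisely the points that are implicit in the paper's citation --- existence via the direct method, uniqueness from strict convexity of the Cheeger energy (with your correct caveat that this fails for the bare upper-gradient energy), the comparison/maximum principle via truncation to pass from $\|v\|_{L^\infty}$ to $\|u\|_{L^\infty}$, and finally the quasiminimizer regularity from~\cite{KS}. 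In short: same approach, and your expanded version would serve as a perfectly acceptable proof were one to be included.
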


\begin{remark}
We want to emphasize that the function $v$ is the solution to the Dirichlet problem on the domain $\overline{\Om}\cap B(x,r)$,
whose boundary  is $\overline{\Om}\cap \partial B(x,r)$. In other words, for all $\phi \in N^{1,p}_0(\overline{\Om}\cap B(x,r))$ one has
$$
\int_{\overline{\Om}\cap B(x,r)} |\nabla v|^{p-2} \nabla v \cdot\nabla \phi \,d\mu = 0,
$$
whereas for the solution of the Neumann problem $u$ we have instead
$$
\int_{\overline{\Om}\cap B(x,r)} |\nabla u|^{p-2} \nabla u \cdot\nabla \phi \,d\mu = \int_{\partial \Omega \cap B(x,r)} \phi f d\nu.
$$
Because of this consideration, the  interior H\"older estimates in~\cite{KS}
can be applied to $v$  in the metric space $X=\overline{\Om}$ and the domain $B(x,r)\cap\overline{\Om}$, thus
establishing H\"older continuity of $v$ in $B(x,r)\cap\overline{\Om}$.

Invoking interior estimates to prove regularity up to the boundary may give pause to  readers familiar with the
smooth setting. The issue here is that the Dirichlet problem for $v$ in $B(x,r)\cap\overline{\Om}$  is not the same
as a Dirichlet problem in $B(x,r)\cap{\Om}$. The  test functions used in the former are in
$N^{1,p}_0(B(x,r)\cap\overline{\Om})$ and thus do not need to have zero trace on $\partial \Omega \cap B(x,r)$.
In fact  the function $v$ does not satisfy the Dirichlet problem (with boundary data $u$)
in  $\Om\cap B(x,r)$, since there is an
extra part of the boundary, namely, $B(x,r)\cap\partial\Om$. On this extra part of the
boundary, the interested reader should note that, in the smooth Riemannian setting, $v$ satisfies  zero Neumann boundary condition.
A similar perspective can be found in the discussion on orbifolds~\cite{BW}.
\end{remark}

The following proposition gives Morrey type bounds on the growth of the averages of  $|\nabla u|^p$ on balls near points
on the boundary $\partial\Omega$.
This bound will be used in Theorem~\ref{thm:holder-intro} to show H\"{o}lder continuity
of $u$ near these points. Recall that the measure $\nu$ is $\Theta$-codimensional with respect to $\mu$ for some
$0<\Theta\le 1$, see~\eqref{eq:Co-Dim-intro} above.

\begin{prop}\label{thm:bounds}
 Suppose that $f\in L^q(\partial\Om)\cap L^{p'}(\partial\Om)$ for some $q>1$.
There exists a constant $\mathbb M$,
depending only on $q$ and
the structure conditions (H1), (H2), such that
when $u$ is a solution to the Neumann boundary value problem with boundary data $f$,  then
for every $x_0\in\partial\Omega$, 
and every $x\in\overline{\Omega}$ and $r>0$ such that $B(x,2r)\subset B(x_0,R)$ with $R<\text{diam}(\Omega)/2$, we have
\[
\int_{B(x,r/4)\cap\Omega}|\nabla u|^{p}\,d\mu
\le \mathbb{M}\left[M^p\,\frac{\mu(B(x,r))}{r^{(1-\alpha)p}}
+M\left(\int_{B(x_0,R)\cap \partial \Om}|f|^q\, d\nu\right)^{1/q}\frac{\mu(B(x,r))^{1/q'}}{r^{\Theta/q'}}\right].
\]
Here, $q'=q/(q-1)$,  $\alpha$ is as in Proposition \ref{prop:KS},
 and $M=\sup_\Om |u|$.
\end{prop}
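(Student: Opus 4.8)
The plan is to compare $u$ on the ball $B=B(x,r)\cap\overline\Omega$ to its $p$-harmonic replacement $v$ from Proposition \ref{prop:KS}, test the Neumann equation against the difference $w=u-v$, and then exploit the Caccioppoli-type estimate together with the H\"older bound for $v$ to close the argument. Concretely, first I would fix $x_0\in\partial\Omega$, $R<\diam(\Omega)/2$, and $x,r$ with $B(x,2r)\subset B(x_0,R)$, and let $v\in N^{1,p}(\overline\Omega)$ be the $p$-harmonic function in $B(x,r)$ agreeing with $u$ outside $B(x,r)$; set $w=u-v\in N^{1,p}_0(B(x,r)\cap\overline\Omega)$. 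Testing the weak formulation of the Neumann problem (recall: for $\phi\in N^{1,p}_0(B(x,r)\cap\overline\Omega)$ one has $\int|\nabla u|^{p-2}\nabla u\cdot\nabla\phi\,d\mu=\int_{B(x,r)\cap\partial\Omega}\phi f\,d\nu$) against $\phi=w$, and subtracting the corresponding identity for $v$ (whose right side is $0$ since $v$ is Cheeger $p$-harmonic in $B(x,r)$), yields
\[
\int_{B(x,r)\cap\Omega}\bigl(|\nabla u|^{p-2}\nabla u-|\nabla v|^{p-2}\nabla v\bigr)\cdot\nabla w\,d\mu
=\int_{B(x,r)\cap\partial\Omega}w\,f\,d\nu.
\]
By the standard monotonicity/convexity inequality for $|\cdot|^{p-2}(\cdot)$ the left-hand side is bounded below by $c\int_{B(x,r)\cap\Omega}|\nabla w|^p\,d\mu$ when $p\ge 2$, and by a similar (reverse-H\"older corrected) lower bound when $1<p<2$; this gives control of $\|\nabla w\|_{L^p(B(x,r)\cap\Omega)}$ in terms of the boundary term.

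The boundary term is estimated using the trace/Sobolev machinery: since $w\in N^{1,p}_0(B(x,r)\cap\overline\Omega)$ vanishes on $\partial B(x,r)$, a Sobolev–Poincar\'e inequality on $B(x,r)\cap\overline\Omega$ (available because $\overline\Omega$ is doubling and supports a $p$-Poincar\'e inequality), combined with the codimension-$\Theta$ property \eqref{eq:Co-Dim-intro} of $\nu$ and H\"older's inequality with exponent $q$ on $\partial\Omega$, gives
\[
\Bigl|\int_{B(x,r)\cap\partial\Omega}w\,f\,d\nu\Bigr|
\le \|f\|_{L^q(B(x_0,R)\cap\partial\Omega)}\,\|w\|_{L^{q'}(\partial\Omega\cap B(x,r),\nu)}
\le C\,\|f\|_{L^q(B(x_0,R)\cap\partial\Omega)}\,\frac{\mu(B(x,r))^{1/q'-1/p}}{r^{\Theta/q'-1}}\,\|\nabla w\|_{L^p(B(x,r)\cap\Omega)}\,r,
\]
where I use the boundedness of $u$ (hence of $w$, up to the constant $M$) to absorb the zeroth-order term and to guarantee $w\in L^\infty$, so that only a favorable power of $\mu(B)$ and $r$ survives. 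Feeding this back into the coercivity estimate and dividing, one gets a bound of the form
\[
\int_{B(x,r)\cap\Omega}|\nabla w|^p\,d\mu
\le C\,M\,\Bigl(\int_{B(x_0,R)\cap\partial\Omega}|f|^q\,d\nu\Bigr)^{1/q}\frac{\mu(B(x,r))^{1/q'}}{r^{\Theta/q'}}.
\]
For the $v$-part, I use Proposition \ref{prop:KS}: $v$ is $(\alpha,C\|u\|_\infty)$-H\"older on $B(x,r/2)$, so $\osc_{B(x,r/2)}v\lesssim M r^\alpha$, and then a Caccioppoli inequality for the Cheeger $p$-harmonic function $v$ on $B(x,r/4)$ (against a cutoff supported in $B(x,r/2)$) gives $\int_{B(x,r/4)\cap\Omega}|\nabla v|^p\,d\mu\lesssim r^{-p}\int_{B(x,r/2)\cap\Omega}|v-v(x)|^p\,d\mu\lesssim M^p\,\mu(B(x,r))\,r^{(\alpha-1)p}$.

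Finally I combine the two pieces via $|\nabla u|^p\le 2^{p-1}(|\nabla v|^p+|\nabla w|^p)$ on $B(x,r/4)\cap\Omega$, which produces exactly the asserted estimate with $\mathbb M$ depending only on $q$ and the structural constants in (H1), (H2) (the convexity constant, the doubling constant, the Poincar\'e constants, the codimension constant $C$ in \eqref{eq:Co-Dim-intro}, and the constants $C,\alpha$ from Proposition \ref{prop:KS}). The main obstacle I anticipate is the $1<p<2$ case of the monotonicity step: there the inequality $\langle|\xi|^{p-2}\xi-|\eta|^{p-2}\eta,\xi-\eta\rangle\ge c|\xi-\eta|^p$ fails, and one must instead use $\langle|\xi|^{p-2}\xi-|\eta|^{p-2}\eta,\xi-\eta\rangle\ge c\,|\xi-\eta|^2(|\xi|+|\eta|)^{p-2}$ and then recover an $L^p$-gradient bound through a H\"older interpolation that also invokes the a priori energy bound $\int_\Omega|\nabla u|^p\,d\mu\lesssim\|f\|_{L^{p'}(\partial\Omega)}^{p'}$ (from Theorem \ref{th:Neumann-reg}); I would either present this interpolation carefully or, to keep the statement uniform in $p$, phrase the boundary estimate so that the $p<2$ loss is absorbed into the constant $\mathbb M$ and the dependence on $M$ (which is harmless since $M=\sup_\Omega|u|$ already appears). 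A secondary technical point is making sure the Sobolev–Poincar\'e/trace inequality on the \emph{relative} ball $B(x,r)\cap\overline\Omega$ has constants independent of $x,r$, which follows from the fact that $\overline\Omega$ itself is doubling and supports a $p$-Poincar\'e inequality (so relative balls are uniformly "good" domains), exactly the setting in which \cite{M} and \cite{KS} operate.
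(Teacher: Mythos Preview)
Your overall architecture matches the paper's proof exactly: compare $u$ to its $p$-harmonic replacement $v$ on $B(x,r)$, split $|\nabla u|^p\le 2^{p-1}(|\nabla v|^p+|\nabla w|^p)$, use monotonicity and the test function $w=u-v$ to reduce the $w$-part to the boundary integral $\int_{B(x,r)\cap\partial\Omega}(u-v)f\,d\nu$, and handle the $v$-part by Caccioppoli (De Giorgi class membership from \cite{KS}) together with the H\"older bound of Proposition~\ref{prop:KS}.

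The one place where you diverge from the paper---and make life harder for yourself---is the boundary estimate. You propose to control $\|w\|_{L^{q'}(\partial\Omega\cap B(x,r),\nu)}$ via a Sobolev--Poincar\'e/trace inequality on the relative ball and then ``feed back'' into the coercivity. That route, taken literally, yields $\|\nabla w\|_{L^p}^{p-1}\lesssim \|f\|_{L^q}\cdot(\text{geometric factor})$ and hence a bound with $\|f\|_{L^q}^{p'}$ rather than the linear-in-$\|f\|_{L^q}$, linear-in-$M$ estimate stated in the proposition. The paper instead bypasses all trace machinery: since $|u|,|v|\le M$ on $B(x,r)$ by the maximum principle, one simply writes
\[
\int_{B(x,r)\cap\partial\Omega}(u-v)f\,d\nu\le 2M\int_{B(x,r)\cap\partial\Omega}|f|\,d\nu
\le 2M\,\nu(B(x,r))^{1/q'}\Bigl(\int_{B(x_0,R)\cap\partial\Omega}|f|^q\,d\nu\Bigr)^{1/q},
\]
and then applies the codimension relation~\eqref{eq:Co-Dim-intro} to $\nu(B(x,r))$. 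This is both simpler and gives the exact shape of the claimed bound. You already invoke the $L^\infty$ bound on $w$ in passing, so you should just use it here directly and drop the trace argument.

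For the case $1<p<2$, the paper does not use an a~priori global energy bound. Instead it uses Young's inequality with exponents $2/p$, $2/(2-p)$ and a free parameter $\tau$ (see the computation leading to \eqref{eqn:plessthan2} in the paper): this produces an extra $C\tau\int_{B(x,r/4)}|\nabla u|^p\,d\mu$ on the right, which for $\tau$ small is absorbed into the left of \eqref{eq:boundDup}, giving the same inequality \eqref{eqn:boundDup2} as in the case $p\ge 2$. This is cleaner than the H\"older interpolation with the global bound from Theorem~\ref{th:Neumann-reg} that you sketch, and it keeps the constant $\mathbb{M}$ depending only on the structural data.
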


For ease, we adopt the following convention in the proof: when notating the integral over a ball $B$ with respect to $\mu$ and $\nu$, we mean integration over $B\cap\Omega$ and $B\cap\partial\Omega$ respectively.

\begin{proof}
Consider a $p$-harmonic extension $v$ of
$u$ from $\overline{\Omega}\setminus B(x,r)$ to $B(x,r)\cap \Om$, as in Proposition \ref{prop:KS}.
Then $v\in N^{1,p}(\overline{\Omega})$ with $u-v\in N^{1,p}_0(B(x,r))$.

First, from the convexity of the function $t\mapsto t^p$ for $p>1$,
\begin{equation}
  \int_{B(x,r/4)}|\nabla u|^p\,d\mu
  \le 2^{p-1} \left(\int_{B(x,r/4)}|\nabla (u-v)|^p\,d\mu+\int_{B(x,r/4)}|\nabla v|^p\,d\mu \right). \label{eq:boundDup}
\end{equation}

Let us first estimate the first integral on the right hand side. The $p$-Laplace operator satisfies the following structural conditions:
\begin{equation}\label{eqn:monotonicity}
(|z|^{p-2}z-|w|^{p-2}w)\cdot(z-w)\ge
\begin{cases}
 C'|z-w|^p, & p\ge 2\\
 C'' (|z|+|w|)^{p-2}|z-w|^2, & p\le 2.
\end{cases}
\end{equation}
Here the constants $C'$ and $C''$ only depend on $p$.

When $p\ge 2$, the use of \eqref{eqn:monotonicity}  gives us
\[
 \int_{B(x,r/4)}|\nabla (u-v)|^{p}\,d\mu \le C\int_{B(x,r/4)}(|\nabla u|^{p-2}\nabla u-|\nabla v|^{p-2}\nabla (v))\cdot(\nabla u-\nabla v)\,d\mu.
\]
Combining this with~\eqref{eq:boundDup} gives
\begin{align}\label{eqn:bigP}
\int_{B(x,r/4)}|\nabla u|^p\, d\mu &\le C\int_{B(x,r/4)}(|\nabla u|^{p-2}\nabla u-|\nabla v|^{p-2}\nabla (v))\cdot(\nabla u-\nabla v)\,d\mu
\notag\\
  &\hskip 7cm +C\int_{B(x,r/4)}|\nabla v|^p\, d\mu.
\end{align}

For $p<2$, we first use Young's inequality with exponents $2/p$ and $2/(2-p)$ and then \eqref{eqn:monotonicity} to obtain
\begin{align}\label{eqn:plessthan2}
\int_{B(x,r/4)} &|\nabla (u-v)|^p \,d\mu\notag\\
= & \int_{B(x,r/4)}\tau^{(p-2)/2}|\nabla (u-v)|^p(|\nabla u|+|\nabla v|)^{p(p-2)/2}\cdot \tau^{(2-p)/2}(|\nabla u|
    +|\nabla v|)^{p(2-p)/2}\,d\mu\notag\\
\le & \int_{B(x,r/4)} \frac{p\, \tau^{(p-2)/p}}{2} |\nabla (u-v)|^2(|\nabla u|+|\nabla v|)^{p-2}
        + \frac{\tau (2-p)}{p}(|\nabla u|+|\nabla v|)^p \,d\mu\notag\\
\le & C \tau^{(p-2)/p}\int_{B(x,r/4)}\left(|\nabla u|^{p-2}\nabla u-|\nabla v|^{p-2}\nabla v\right)\cdot(\nabla u-\nabla v)\, d\mu\notag\\
     &\hskip 8cm+C\tau\int_{B(x,r/4)}\left(|\nabla u|^p+|\nabla v|^p\right)\, d\mu.
\end{align}

By choosing $\tau$ small enough so that $C\tau\leq 1/2$ we can absorb the integral of $|\nabla u|^p$ to the
left hand side of ~\eqref{eq:boundDup} to obtain~\eqref{eqn:bigP} even for the case $1<p<2$, that is,
\begin{align}\label{eqn:boundDup2}
\nonumber \int_{B(x,r/4)}|\nabla u|^p\,d\mu
 &\leq C \int_{B(x,r)}(|\nabla u|^{p-2}\nabla u-|\nabla v|^{p-2}\nabla v)\cdot(\nabla u-\nabla v)\,d\mu\\
 &\hskip 7cm+C\int_{B(x,r/4)}|\nabla v|^p \,d\mu.
\end{align}
Notice that all the integrands are nonnegative and thus integrating over  $B(x,r)$ instead of $B(x,r/4)$
can only increase the integrals.

Let us first consider the first term on the right hand side. Using $u-v$ as a test function,
we have from the weak formulation of the $p$-Laplacian equation~\ref{thm:equiv-intro}(a)
(with $\Omega$ replaced by $B(x,r)$ and $f$ replaced by the constant function $0$ on $B(x,r)$; recall that
$v$ is a solution to the Dirichlet problem on $B(x,r)\cap\overline{\Om}$ with boundary data $u$ on
$\Om\cap\partial B(x,r)$) that
\[
\int_{B(x,r)}|\nabla v|^{p-2}\nabla v\cdot \nabla (u-v)\,d\mu=0.
\]
Hence, integrating over the ball $B(x,r)$ we get
\begin{align*}
 \nonumber
 &\ \int_{B(x,r)}(|\nabla u|^{p-2}\nabla u-|\nabla v|^{p-2}\nabla v)\cdot(\nabla u-\nabla v)\,d\mu \\
  &= \int_{B(x,r)}|\nabla u|^{p-2}\nabla u\cdot (\nabla u-\nabla v)-|\nabla v|^{p-2}\nabla v\cdot (\nabla u-\nabla v)\,d\mu \\
   &= \int_{B(x,r)}|\nabla u|^{p-2}\nabla u\cdot (\nabla u-\nabla v)\,d\mu.
\end{align*}
Now, since $u$ is a solution to the Neumann problem with boundary data $f$, we obtain
\[
\int_{B(x,r)}|\nabla u|^{p-2}\nabla u\cdot \nabla (u-v)\,d\mu=\int_{B(x,r)}(u-v)f\,d\nu.
\]
Hence, invoking the $L^\infty$ bounds on solutions, and the regularity assumption  (H2) of the measure $\nu$, one has

\begin{align}
\int_{B(x,r)}(u-v)f\,d\nu
  &\le \int_{B(x,r)}(|u|+|v|)|f|\,d\nu \notag \\
  &\le 2CM\int_{B(x,r)}|f|\,d\nu \notag \\
 &\le 2CM \nu(B(x,r))^{1/q'}\left(\int_{B(x,r)\cap \partial \Om}|f|^q\, d\nu\right)^{1/q} \notag \\
 &\le 2CM\left(\int_{B(x_0,R)} |f|^q\, d\nu\right)^{1/q}\, \frac{\mu(B(x,r))^{1/q'}}{r^{\Theta/q'}}.
 \label{eq:boundDu_vp}
\end{align}
Note that the constant C above changes from line to line, but only depends on
$p$ and the regularity constant of the measure $\nu$. Here we have used the fact that $u$, and hence by
maximum principle, $v$ are bounded in $B(x_0,R)$, see~\cite{MS}. An upper bound for $|u|+|v|$ in $B(x_0,R)$ is
denoted by $M$.

We now consider the second term of the sum on the right hand side of equation~\eqref{eqn:boundDup2}.

Since $v$ is $p$-harmonic in $B(x,r)$, then it is in the De Giorgi class
$DG_p(B(x,r))$, see~\cite[Proposition~3.3]{KS}.  Hence for all $k\in \R$,
\[
\int_{B(x,r/4)}|\nabla (v-k)_\pm|^{p}\,d\mu\le\frac{C}{(r/2-r/4)^p}\int_{B(x,r/2)}(v-k)_\pm^{p}\,d\mu.
\]
Here, for a function $h$, the function $h_+=\max\{h,0\}$ is the positive part of $h$ and $h_-=\max\{-h,0\}$ is
the negative part of $h$.
Hence, choosing $k=v(x)$, we have
\[
\int_{B(x,r/4)}|\nabla (v-v(x))_+|^{p}\,d\mu\le\frac{C}{r^p}\int_{B(x,r/2)}(v-v(x))_+^{p}\,d\mu
\]
and
\[
\int_{B(x,r/4)}|\nabla (v-v(x))_-|^{p}\,d\mu\le\frac{C}{r^p}\int_{B(x,r/2)}(v-v(x))_-^{p}\,d\mu.
\]
Summing these two inequalities, and recalling that $B(x,r/2)\subset B(x,\frac{3}{2}r)\subset \Om$, we can then invoke the scale invariant {\it local} $\alpha$-H\"older continuity estimates for $p$-harmonic functions from Proposition \ref{prop:KS}
(originally in \cite[Theorem~5.2]{KS}), obtaining
\begin{align}
  \int_{B(x,r/4)}|\nabla v|^{p}\,d\mu \le \frac{C}{r^p}\int_{B(x,r/2)}|v-v(x)|^{p}\,d\mu
   &\le \frac{C\,M^p}{r^p}\int_{B(x,r/2)}r^{\alpha p}\,d\mu  \notag \\
   &\le
   C M^p\,\frac{\mu(B(x,r))}{r^{(1-\alpha)p}}.
   \label{eq:boundDvp}
\end{align}
Combining inequalities~\eqref{eq:boundDup},~\eqref{eq:boundDu_vp}, and~\eqref{eq:boundDvp}, we finally conclude
\[
\int_{B(x,r/4)}|\nabla u|^{p}\,d\mu
\le C\, M^p \frac{\mu(B(x,r))}{r^{(1-\alpha)p}}+2CM\left(\int_{B(x_0,R)\cap \partial\Omega}|f|^q\, d\nu\right)^{1/q}\frac{\mu(B(x,r))^{1/q'}}{r^{\Theta/q'}}.\qedhere
\]
\end{proof}

Next, we establish the global  H\"{o}lder continuity of $u$ in $\overline\Omega$. The argument is
along the lines of the standard proof of Morrey embedding theorem found in~\cite{HajK}, and is a streamlined version
of the classical regularity proof found in PDE texts such as~\cite{Gia1, Gia2}. This classical proof is in
two parts: the first part is to show that functions whose gradient exhibit a decay property in the spirit of
Proposition~\ref{thm:bounds} belong to a Campanato space (see~\cite[page~43]{Gia2}) by using Poincar\'e inequalities,
and the second part is to show that functions in the Campanato space are locally H\"older continuous
(see~\cite[page~41]{Gia2}) by using a telescoping sequence of balls and a Lebesgue point argument. Both
parts are combined into one seamless argument in the proof given below for the convenience of the reader.

Now we are ready to prove Theorem~\ref{thm:holder-intro}.

\begin{proof}[Proof of Theorem~\ref{thm:holder-intro}]
Choose $R_0$ with $0<R_0<\text{diam}(\Omega)/2$.
For $x_0\in\partial \Omega$, let $x, y\in B(x_0,R_0/64)\cap \Om$ be $\mu$-Lebesgue points of $u$.
Then $d(x,y)<R_0/32$, $B(x, R_0/16)\subset B(x_0,R_0)$,
and $B(y,R_0/16)\subset B(x_0,R_0)$.

Define a sequence of balls indexed by $\Z$ centered at $x$ and $y$ as follows:
For $i\ge 0$, set $B_i=B(x,2^{1-i}d(x,y))$, and for $i<0$, set $B_i=B(y,2^{1+i}d(x,y))$.
We denote the radius of the ball $B_i$ by $\rho_i$; note that $\rho_i=2^{1-|i|}d(x,y)$.  For each ball
in the sequence, we can apply Proposition~\ref{thm:bounds} with $r=4\rho_i$.

By an application of the Poincar\'e inequality, the doubling condition and the Lebesgue
point property of $x,y$ for $u$ with respect to  the measure $\mu$,  we then estimate
\begin{align*}
  |u(x)-u(y)| \le \sum_{i\in\Z}|u_{B_i}-u_{B_{i+1}}|
   &\le C\sum_{i\in\Z}\jint_{B_i} |u-u_{B_i}|\,d\mu  \\
   &\le C\sum_{i\in\Z}\rho_i \left(\mu(B_i)^{-1}\int_{B_i} |\nabla u|^p\,d\mu\right)^{1/p}.
\end{align*}
Therefore, invoking Proposition~\ref{thm:bounds}, applied to $4B_i$ for each $i$, one obtains
\begin{align}
   |u(x)-u(y)|
   &\le C\sum_{i\in\Z}\frac{\rho_i}{\mu(B_i)^{1/p}}
   \left[\frac{\mu(B_i)^{1/p}}{\rho_i^{1-\alpha}}+\frac{\mu(B_i)^{1/(q'p)}}{\rho_i^{\Theta/(q'p)}}\right]\notag\\
   &= C\sum_{i\in\Z} \left[\rho_i^\alpha+\frac{\rho_i^{1-\Theta/(q'p)}}{\mu(B_i)^{(1-1/q')/p}}\right] \label{eq:applyboundsthm}\\
    \label{eq:applyboundsthm2} &\le C\sum_{i\in\Z} \left[\rho_i^\alpha+
     \left(\frac{R_0}{\mu(B(x_0,R_0))}\right)^{1/(pq)}\rho_i^{1-\Theta/(q'p)-Q/(qp)}\right]\\
   &\le C\sum_{i\in\Z}\rho_i^{1-\varepsilon}
   = Cd(x,y)^{1-\varepsilon}\sum_{i\in\Z}2^{-|i|(1-\varepsilon)}
   \ \le \  Cd(x,y)^{1-\varepsilon}. \notag
\end{align}
Here inequality~\eqref{eq:applyboundsthm2} follows from the  lower mass bound property of $\mu$,
see~\eqref{eq:lower-mass-exp}. The constant $C$ depends only on the structural constants as
well as on $\mu(B(x_0, R_0))$, $R_0$ and $\int_{B(x_0,R_0)}|f|^q\, d\nu$. This completes the proof of
H\"older's inequality.

Finally, to prove the Harnack inequality, we assume that $u\ge 0$ on $\Om$ and that $f=0$ on the
relatively open set $W\subset\partial\Om$.
By Theorem~\ref{thm:equiv-intro}(a) we have that whenever
$\pip\in N^{1,p}(\Om)=N^{1,p}(\overline{\Om})$ with support contained in $\Om\cup W$, we have
$\int_{\Om\cup W}|\nabla u|^{p-2}\nabla u\cdot\nabla\pip\, d\mu=0$, and hence it follows that
$u$ is Cheeger $p$-harmonic in the domain $\Om\cup W$, seen as a domain in the metric space $\overline{\Om}$.
By the strong maximum principle as in~\cite[Corollary~6.4]{KS},
we then have that either $u$ is identically zero in $\overline{\Om}$ (and hence satisfies the Harnack inequality
trivially), or else, $u>0$ in $\Om$. In this latter case,
invoking ~\cite[Corollary~7.3]{KS}, the desired Harnack inequality follows.
\end{proof}

\begin{remark} Although we were able to prove $L^\infty$ bounds for $u$ from the hypothesis $f\in L^{p'}(\partial \Om, \nu)$ (see the appendix), in the argument above we need the stronger integrability condition $f\in L^q(\partial \Om, \nu)$ with $Q-\Theta<(p-\Theta)q$.
If we only have that $f\in L^{p'}(\partial\Om)$ then the proof can be modified, so that one obtains a weaker regularity condition. Indeed, in this case, if $x,y\in\partial\Om$
such that the maximal function $M(f^{p'})$ is finite at those points, then
\[
|u(x)-u(y)|\le C d(x,y)^{1-\beta}\, \left[1+ M(f^{p'})(x)^{1/p'}+M(f^{p'})(y)^{1/p'}\right],
\]
where $\beta=\max\{1-\alpha, \Theta/p\}$, which is automatically smaller than $1$ as $p>\Theta$.
\end{remark}

\section{Stability of solutions under perturbation of the Neumann data}\label{Sect-stability}

In this section we prove two different stability results for  $p-$Laplacian Neumann problems in the metric measure
space setting, under $L^{p'}$ perturbations of the boundary data, with $p>1$ and $p'$ the H\"older conjugate of $p$.
The first is Theorem~\ref{thm:stability-cheeger-intro}. The second stability result corresponds to the
formulation of the Neumann problem using only minimal upper gradients, as  in ~\cite{MS} (i.e., the variational
formulation in Theorem~\ref{thm:equiv-intro} part (b), with $|\nabla u|$ substituted by the minimal upper gradient,) and it
is stated in  Theorem~\ref{thm:stability-uppergradient}.

We first begin by proving Theorem~\ref{thm:stability-cheeger-intro}.

\begin{proof}[Proof of Theorem~\ref{thm:stability-cheeger-intro}]
Let $f,g, u, v$ be as in the statement of the theorem.
Since $u$ and $v$ are solutions of the Neumann problems with boundary data $f$ and $g$, we
have that for all $\phi \in N^{1,p}(\bar \Omega)$,
\[
\int_\Omega |\nabla u|^{p-2} \nabla u\cdot \nabla \phi\, d\mu = \int_{\partial \Omega} \phi fd\nu
\text{ and }\int_\Omega |\nabla v|^{p-2} \nabla v\cdot \nabla \phi\, d\mu = \int_{\partial \Omega} \phi  g d\nu.
\]
Subtracting the second  identity from the first and substituting $\phi=u-v$, yields
\[
\int_\Omega (|\nabla u|^{p-2} \nabla u - |\nabla v|^{p-2} \nabla v)\cdot  (\nabla u- \nabla v) d\mu = \int_{\partial \Omega} (f-g) (u-v) d\nu
  \le \|u-v\|_{L^p(\partial\Om)}\|f-g\|_{L^{p'}(\partial\Om)}.
\]
We first consider the case $p\ge 2$. In this case,
we invoke the monotonicity~\eqref{eqn:monotonicity} to obtain
\begin{equation}\label{monotonicity-st}
\| \nabla u- \nabla v \|_{L^p (\Omega)}^p \le C\, \|u-v\|_{L^p(\partial \Omega) } \|f-g\|_{L^{p'}(\partial \Omega)} .
\end{equation}
Now by using the boundedness of the trace operator in Theorem~\ref{thm:Maly-Trace}
and employing Lemma~\ref{lem:control-via-data}, we obtain
\begin{align*}
\Vert u-v\Vert_{L^p(\partial\Om)}\le \Vert u\Vert_{L^p(\partial\Om)}+ \Vert v\Vert_{L^p(\partial\Om)}
  &\le C\left[\Vert u\Vert_{N^{1,p}(\Om)}+ \Vert v\Vert_{N^{1,p}(\Om)}\right]\\
  &\le C\left[\Vert\nabla u\Vert_{L^p(\Om)}+\Vert\nabla v\Vert_{L^p(\Om)}\right]\\
  &\le C\left[\Vert f\Vert_{L^{p'}(\partial\Om)}^{p'/p}+\Vert g\Vert_{L^{p'}(\partial\Om)}^{p'/p}\right].
\end{align*}
Therefore
\[
\|\nabla u-\nabla v\|_{L^p(\Omega)}
  \le C\, \left(\Vert f\Vert_{L^{p'}(\partial\Om)}^{p'/p}+\Vert g\Vert_{L^{p'}(\partial\Om)}^{p'/p}\right)^{1/p}
     \|f-g\|_{L^{p'}(\partial \Omega)}^{1/p},
\]
which yields the desired inequality for the case $p\ge 2$.

On the other hand, if $1<p<2$, then we can proceed as in \eqref{eqn:plessthan2} and invoke H\"older's inequality and~\eqref{eqn:monotonicity} to obtain
\begin{align}\label{monotonicity-pLess2}
\| &\nabla u- \nabla v \|_{L^p (\Omega)}^p\le
\left(\int_\Om(|\nabla u|+|\nabla v|)^p\, d\mu\right)^{(2-p)/2}\left(\int_\Om|\nabla (u-v)|^2 (|\nabla u|+|\nabla v|)^{p-2}\, d\mu\right)^{p/2}
     \notag \\
     & \le C\left(\|\nabla u\|_{L^p(\Om)}^p+\|\nabla v\|_{L^p(\Om)}^p\right)^{(2-p)/2}
         \left(\ \int_\Om (|\nabla u|^{p-2} \nabla u - |\nabla v|^{p-2} \nabla v)\cdot (\nabla u - \nabla v) d\mu
         \right)^{p/2}
     \notag \\
     &\le C\left(\|\nabla u\|_{L^p(\Om)}^p+\|\nabla v\|_{L^p(\Om)}^p\right)^{(2-p)/2}
         \left(\|u-v\|_{L^p(\partial\Omega) } \|f-g\|_{L^{p'}(\partial \Omega)}\right)^{p/2}.
\end{align}
Now, as in the case of $p\ge 2$, we use~Theorem \ref{thm:Maly-Trace}  together with Lemma~\ref{lem:control-via-data}, but this
time also to bound $\|\nabla u\|_{L^p(\Om)}$ and $\|\nabla v\|_{L^p(\Om)}$, to obtain
\[
\| \nabla u- \nabla v \|_{L^p (\Omega)}^p
  \le C\left(\|f\|_{L^{p'}(\partial\Om)}+\|g\|_{L^{p'}(\partial\Om)}\right)^{\tfrac{2-p}{2}p'+\tfrac{p'}{2}}
    \|f-g\|_{L^{p'}(\partial \Omega)}^{p/2}. \qedhere
\]
\end{proof}

\begin{remark}
Theorem \ref{thm:stability-cheeger-intro}, in combination with the Poincar\'e
inequality~\eqref{eq:PI-zeroMean}, yields that if $(f_k)_k$ is a sequence
of functions in $L^{p'}(\partial\Om)$ with the condition that $\int_{\partial\Om}f_k\, d\nu=0$ for each $k$, and a function
$f\in L^{p'}(\partial\Om)$ such that $f_k\to f$ in $L^{p'}(\partial\Om)$, and if $u_k$ is the solution to the Neumann boundary value
problem with boundary data $f_k$ and with $\int_\Om u_k\, d\mu=0$, then $u_k\to u$ in $N^{1,p}(\Om)$ with $u$ the
solution to the Neumann boundary value problem with boundary data $f$ and with $\int_\Om u\, d\mu=0$.
\end{remark}

Next, we turn to  the version of the Neumann problem interpreted as a variational problem involving  upper gradients. In this setting
there is no Euler-Lagrange equation available and so the above argument would not work.
Correspondingly, in this more general setting we obtain a  weaker result,  in the sense that we only can
prove that convex combination of the solutions for data
$f_k$ converge to a solution for the limit data $f_k\to f$. We include this result here to illustrate some of the control we
give up by not having access to the Euler-Lagrange equation provided by differential structure $\nabla u$.

Given $u\in N^{1,p}(\Omega),$ and a $\nu$-measurable function $f:\partial\Omega\to\R$ such that $\int_{\partial\Omega}f\,d\nu=0$,
consider the energy functional
\[
I_f(u):=\int_{\overline{\Omega}}g_u^p\,d\mu-p\int_{\partial\Omega}uf\,d\nu.
\]
Here $g_u$ is the minimal $p$-weak upper gradient of $u$. Let
\[
N^{1,p}_*(\Omega)=\left\{u\in N^{1,p}(\Omega):\int_\Omega u\,d\mu=0\right\},
\]
and define
\[
I_{\min}(f)=\inf_{v\in N^{1,p}_*(\Omega)}I_f(v).
\]

\begin{thm}\label{thm:stability-uppergradient}
For each $k\in\N$, let $f_k\in L^{p'}(\partial\Omega)$ be such that $\int_{\partial\Omega}f_k\,d\nu=0,$ where
$p'$ is the H\"older dual $p/(p-1)$ of $p$.  Let $u_k\in N^{1,p}_*(\Omega)$ be such that $I_{f_k}(u_k)=I_{\min}(f_k)$.
Suppose that $f_k\to f$ in $L^{p'}(\partial\Omega)$. Then
$\int_{\partial\Omega}f\,d\nu=0$, and
there is a convex combination sequence $v_k=\sum_{j=k}^{N(k)}\lambda_{j,k}u_j$ that converges in $N^{1,p}(\Omega)$ to
a function $u\in N^{1,p}_*(\Omega)$ and we have that $I_f(u)=I_{\min}(f)$.
\end{thm}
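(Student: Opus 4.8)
The plan is to extract weak limits from the minimizing sequence and use convexity of the energy together with Mazur's lemma to pass to a limit, then verify the limit is a minimizer by comparing energies. First I would establish that the sequence $(u_k)$ is bounded in $N^{1,p}(\Omega)$. Since $I_{f_k}(u_k)=I_{\min}(f_k)\le I_{f_k}(0)=0$, we get $\int_{\overline\Omega}g_{u_k}^p\,d\mu\le p\int_{\partial\Omega}u_k f_k\,d\nu\le p\,\Vert u_k\Vert_{L^p(\partial\Om)}\,\Vert f_k\Vert_{L^{p'}(\partial\Om)}$. Using the boundedness of the trace operator (Theorem~\ref{thm:Maly-Trace}) and Lemma~\ref{lem:control-via-data}, the right-hand side is controlled by $C\,\Vert\nabla u_k\Vert_{L^p(\Om)}\,\Vert f_k\Vert_{L^{p'}(\partial\Om)}$; since $\Vert f_k\Vert_{L^{p'}}$ is bounded (convergent sequence), and since on $N^{1,p}_*(\Omega)$ the energy semi-norm controls the full norm via the Poincar\'e inequality~\eqref{eq:PI-zeroMean}, this gives a uniform bound $\Vert u_k\Vert_{N^{1,p}(\Om)}\le C$. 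By reflexivity of $N^{1,p}(\Omega)$ we may pass to a subsequence (not relabeled) with $u_k\rightharpoonup u$ weakly in $N^{1,p}(\Omega)$, hence also $Tu_k\rightharpoonup Tu$ in $L^p(\partial\Om)$ by continuity of the trace, and the zero-mean and zero-boundary-integral conditions pass to the limit, so $u\in N^{1,p}_*(\Omega)$ and $\int_{\partial\Om}f\,d\nu=0$.

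Next I would pass from weak convergence to the convex-combination statement. The functional $v\mapsto\int_{\overline\Omega}g_v^p\,d\mu$ is convex and lower semicontinuous with respect to $L^p$-convergence (this is standard for the $p$-energy built from minimal upper gradients; it follows from the fact that minimal weak upper gradients behave well under $L^p$-limits, cf.~the discussion in Subsection~\ref{subsect:2.2}). By Mazur's lemma, since $u_k\rightharpoonup u$ weakly in $N^{1,p}(\Omega)$, there are convex combinations $v_k=\sum_{j=k}^{N(k)}\lambda_{j,k}u_j$ with $\lambda_{j,k}\ge 0$, $\sum_j\lambda_{j,k}=1$, such that $v_k\to u$ strongly in $N^{1,p}(\Omega)$; in particular $g_{v_k}\to g_u$ in $L^p(\overline\Omega)$ and $Tv_k\to Tu$ in $L^p(\partial\Om)$. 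Note each $v_k\in N^{1,p}_*(\Omega)$ since the zero-mean constraint is preserved by convex combinations.

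Finally I would identify $u$ as the minimizer for $f$. For any test function $w\in N^{1,p}_*(\Omega)$ and each $k$, minimality of $u_j$ for $f_j$ gives $I_{f_j}(u_j)\le I_{f_j}(w)$; forming the convex combination over $j$ from $k$ to $N(k)$ and using convexity of $v\mapsto\int g_v^p$ and linearity of $v\mapsto-p\int_{\partial\Om}vf_j\,d\nu$, together with $f_j\to f$ in $L^{p'}$ and the uniform $L^p(\partial\Om)$-bound on the $u_j$, one deduces $I_f(v_k)\le I_{f}(w)+\varepsilon_k$ with $\varepsilon_k\to0$ (the error comes from replacing each $f_j$ by $f$, estimated by $\sup_{j\ge k}\Vert f_j-f\Vert_{L^{p'}}$ times a uniform bound, plus the change from $v_k$ back to $w$ in the linear term which is likewise controlled by $\Vert f_j-f\Vert_{L^{p'}}$). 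Letting $k\to\infty$ and using $I_f(v_k)\to I_f(u)$ (from $g_{v_k}\to g_u$ in $L^p$ and $Tv_k\to Tu$ in $L^p$ against $f\in L^{p'}$), we get $I_f(u)\le I_f(w)$ for all admissible $w$, i.e.~$I_f(u)=I_{\min}(f)$. The main obstacle is the bookkeeping in this last step: one must keep the convex weights, the varying data $f_j$, and the two sources of error (data perturbation and convex-combination replacement) organized so that both error terms genuinely vanish; the lower semicontinuity of the upper-gradient energy under convex combinations is the conceptual heart that makes Mazur's lemma do the work that an Euler--Lagrange equation would otherwise do.
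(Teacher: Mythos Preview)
Your proposal is correct and follows essentially the same strategy as the paper: bound $(u_k)$ in $N^{1,p}(\Omega)$ via $I_{f_k}(u_k)\le 0$, pass to convex combinations, and compare with an arbitrary competitor $w$ using convexity of the energy together with the fact that $f_j\to f$ in $L^{p'}$. The only organizational difference is that you invoke reflexivity of $N^{1,p}(\Omega)$ (Cheeger's theorem) and apply Mazur's lemma directly in $N^{1,p}$ to obtain strong $N^{1,p}$-convergence $v_k\to u$, whence $g_{v_k}\to g_u$ and $I_f(v_k)\to I_f(u)$ immediately; the paper instead cites the more elementary \cite[Proposition~7.3.7]{HKST} (Mazur in $L^p\times L^p$ applied to the pairs $(u_k,g_{u_k})$), obtaining only $v_k\to u$ in $L^p$ and $g_k:=\sum_j\lambda_{j,k}g_{u_j}\to g$ in $L^p$ for some upper gradient $g\ge g_u$, and then has to argue via $I_f(u)\le \int g^p\,d\mu-p\int uf\,d\nu$ and a slightly longer triangle-inequality computation. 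Your route is marginally cleaner but relies on the nontrivial reflexivity of $N^{1,p}$, while the paper's route is more self-contained; the substance is the same.
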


In the above, note that for each $k,j$ we have $0\le \lambda_{j,k}\le 1$ with $\sum_{j=k}^{N(k)}\lambda_{j,k}=1$.

\begin{proof}
Since $\int_{\partial\Omega}f_k\,d\nu=0$ for all $k\in\N$ and $f_k\to f$ in $L^{p'}(\partial\Omega),$ it is clear that
$\int_{\partial\Omega} f\,d\nu=0$.

For each $k$, we have that $$I_{f_k}(u_k)=I_{\min}(f_k)\le 0,$$ since the zero function belongs to $N^{1,p}_*(\Omega).$
Thus by~\cite[Proposition~4.1]{MS}, there exists $C>0$ such that for all $k\in\N$,
\[
0\ge I_{f_k}(u_k)\ge\int_{\Omega}g_{u_k}^p\,d\mu-C\left(\int_\Omega g_{u_k}^p\,d\mu\right)^{1/p}
  \left(\int_{\partial\Omega}|f_k|^{p'}\,d\nu\right)^{1/p'}.
\]
We then have that
\[
\int_\Omega g_{u_k}^p\,d\mu\le C\int_{\partial\Omega}|f_k|^{p'}\,d\nu\le C_0,
\]
for some $0<C_0<\infty$, since $f_k\to f$ in $L^{p'}(\partial\Omega)$.  Furthermore, since $u_k\in N^{1,p}_*(\Omega),$ it follows from
the Sobolev-type inequality~\cite[(3.6)]{MS}
that there exists some $C>0$ such that for all $k\in\N$,
\[
\|u_k\|_{L^p(\Omega)}\le C\|g_{u_k}\|_{L^p(\Omega)}.
\]
Thus, $(u_k)_{k\in\N}$ is a bounded sequence in $N^{1,p}(\Omega)$, and so for each $k\in\N$, there is a convex combination
\[
v_k:=\sum_{j=k}^{N(k)}\lambda_{j,k}u_j,\quad g_k:=\sum_{j=k}^{N(k)}\lambda_{j,k}g_{u_j}
\]
with $0\le \lambda_{j,k}\le 1$ and
$\sum_{j=k}^{N(k)}\lambda_{j,k}=1$, such that $v_k\to u$ in $L^p(\Omega)$ and $g_k\to g$ to $L^p(\Omega)$ where
$g$ is some $p$-weak upper gradient of $u$, see for instance~\cite[Lemma~3.6]{Sh}
or~\cite[Proposition~7.3.7]{HKST}.  From the
boundedness of the trace $T:N^{1,p}(\Omega)\to L^p(\partial\Omega)$,
\begin{align*}
\left|\int_{\overline\Omega}v_kf\,d\nu-\int_{\partial\Omega}uf\,d\nu\right|
   &\le\int_{\overline\Omega}|(v_k-u)f|d\nu \\
	&\le\|v_k-u\|_{L^p(\partial\Omega)}\|f\|_{L^{p'}(\partial\Omega)}\\
	&\le C\|v_k-u\|_{N^{1,p}(\Omega)}\|f\|_{L^{p'}(\partial\Omega)}\\
	&\le C\left(\|v_k-u\|_{L^p(\Omega)}+\|g_k-g\|_{L^p(\Omega)}\right)\|f\|_{L^{p'}(\partial\Omega)}\to 0
\end{align*}
as $k\to\infty.$  Thus we have that
\begin{align}
I_f(u)=\int_{\overline\Omega}g_u^p\,d\mu-p\int_{\partial\Omega}uf\,d\nu
&\le\int_{\overline\Omega}g^p\,d\mu-p\int_{\partial\Omega}uf\,d\nu\nonumber\\
	&=\lim_{k\to\infty}\left(\int_{\overline\Omega}g_k^p\,d\mu-p\int_{\partial\Omega}v_kf\,d\nu\right).\label{***}
\end{align}

Let $\eps>0.$  Then, there exists $v_0\in N^{1,p}_*(\Omega)$ such that
$$I_f(v_0)<I_{\min}(f)+\eps.$$
We note that for any $v\in N^{1,p}_*(\Omega),$
\begin{equation*}
\left|I_{f_k}(v)-I_{f}(v)\right|
  \le p\int_{\partial\Omega}|v(f_k-f)|\,d\nu\le p\|v\|_{L^p(\partial\Omega)}\|f_k-f\|_{L^{p'}(\partial\Omega)}\to 0
\end{equation*}
as $k\to\infty$, since $f_k\to f$ in $L^q(\partial\Omega)$ and $v\in L^p(\partial\Omega)$ by the boundedness of the trace
operator.  Since $I_{f_k}(u_k)=I_{\min}(f_k)$ for each $k$, we then have that
\begin{equation}\label{**}
\limsup_{k\to\infty}I_{f_k}(u_k)\le\limsup_{k\to\infty}I_{f_k}(v_0)=I_f(v_0)<I_{\min}(f)+\eps.
\end{equation}
This also shows that $\lim_{k\to\infty}I_{\min}(f_k)=I_{\min}(f)$.

By the triangle inequality, we have that
\begin{align*}
\left(\int_{\overline\Omega}g_k^p\,d\mu\right)^{1/p}
        \le\sum_{j=k}^{N(k)}\lambda_{j,k}\left(\int_{\overline\Omega}g_{u_j}^p\,d\mu\right)^{1/p} 
	=\sum_{j=k}^{N(k)}\lambda_{j,k}\left(I_{f_j}(u_j)+p\int_{\partial\Omega}u_jf_j\,d\nu\right)^{1/p}.
\end{align*}
By \eqref{**}, it follows that for sufficiently large $k\in\N,$
\[
\left(\int_{\overline\Omega}g_k^p\,d\mu\right)^{1/p}
  \le\sum_{j=k}^{N(k)}\lambda_{j,k}\left(I_{\min}(f)+\eps+p\int_{\partial\Omega}u_jf_j\,d\nu\right)^{1/p}.
\]
By H\"older's Inequality, we have that
\begin{align*}
\bigg(\int_{\overline\Omega}g_k^p\,d\mu\bigg)^{1/p}
 &\le\left(\sum_{j=k}^{N(k)}\lambda_{j,k}
    \left(I_{\min}(f)+\eps+p\int_{\partial\Omega}u_jf_j\,d\nu\right)\right)^{1/p}\left(\sum_{j=k}^{N(k)}\lambda_{j,k}\right)^{1/p'}\\
	&=\left(\sum_{j=k}^{N(k)}\lambda_{j,k}\left(I_{\min}(f)+\eps+p\int_{\partial\Omega}u_jf_j\,d\nu\right)\right)^{1/p}.
\end{align*}
Therefore it follows that
\[
\int_{\overline\Omega}g_k^p\,d\mu\le I_{\min}(f)+\eps+p\sum_{j=k}^{N(k)}\lambda_{j,k}\int_{\partial\Omega}u_jf_j\,d\nu.
\]
Hence,
\begin{align*}
\int_{\overline\Omega}g_k^p\,d\mu-p\int_{\partial\Omega}v_kf\,d\nu&\le I_{\min}(f)
    +\eps+\sum_{j=k}^{N(k)}\lambda_{j,k}p\int_{\partial\Omega}u_jf_j\,d\nu-p\int_{\partial\Omega}v_kf\,d\nu\\
	&=I_{\min}(f)+\eps+p\sum_{j=k}^{N(k)}\lambda_{j,k}\int_{\partial\Omega}u_j(f_j-f)\,d\nu\\
	&\le I_{\min}(f)+\eps+p\sum_{j=k}^{N(k)}\lambda_{j,k}\|u_j\|_{L^p(\partial\Omega)}\|f_j-f\|_{L^{p'}(\partial\Omega)}\\
	&\le I_{\min}(f)+\eps+C\, p\, \sum_{j=k}^{N(k)}\lambda_{j,k}\|u_j\|_{N^{1,p}(\Omega)}\|f_j-f\|_{L^{p'}(\partial\Omega)}.
\end{align*}
Here we again used the boundedness of the trace operator.  Since the functions
$u_k$ are bounded in $N^{1,p}(\Omega)$ and $f_k\to f$ in $L^{p'}(\partial\Omega)$,
it follows that for sufficiently large $k$,
\[
\int_{\overline\Omega}g_k^p\,d\mu-p\int_{\partial\Omega}v_kf\,d\nu<I_{\min}(f)+2\eps.
\]
Therefore by \eqref{***},  we have that $I_f(u)\le I_{\min}(f)+2\eps$.
By definition, $I_{\min}(f)\le I_{f}(u)$. It follows that $I_{\min}(f)=I_f(u)$; that is, $u$ is a solution to the Neumann boundary value
problem with boundary data $f$.
\end{proof}

\section{Constructing an induced non-local fractional Laplacian for compact doubling metric measure spaces}
\label{sec:construct-fractLap}

\subsection{The general case $1<p<\infty$}

In this section we provide a method  to reproduce the strategy in \cite{CS} and define an analog of fractional $p$-Laplacian operators
$(-\Delta_p)^\theta$ on doubling metric measure
spaces for $1<p<\infty$ and $0<\theta<1$. We recall that in~\cite{EbGKSS} a fractional Laplacian
$(-\Delta_2)^\theta$ corresponding to a Cheeger differential structure
on a complete doubling metric measure space
supporting a $2$-Poincar\'e inequality was constructed using a method different from the one employed in this paper.
In Section~\ref{Sec:Reconcile} below we will show that the definition  in~\cite{EbGKSS}, gives rise to the
same operator we construct in this section.

 While in the Euclidean case \cite{CS}, the $(-\Delta_2)^\theta$ operator  in $\R^n$ arises as the
 Dirichlet-to-Neumann map in  the upper half-space $\R^{n+1}_+$, in our more general setting the operators
 $(-\Delta_p)^\theta$ are defined in terms of the Dirichlet-to-Neumann map in a hyperbolic filling of
 $(Z,d,\nu)$, which satisfies the hypotheses (H0),~(H1), and (H2).

As outlined in Section \ref{sub:hyp-fill}, the hyperbolic filling construction in~\cite{BBS}, shows that any
compact doubling metric measure space $(Z,d,\nu)$, for any fixed
and $1<p<\infty$ with $0<\theta<1$, arises as the boundary of a uniform domain $\Om$,
equipped with a measure $\mu$, such that $(\Om, d,\mu)$ is doubling and supports a $1$-Poincar\'e inequality (and hence, a
$p$-Poincar\'e inequality for each $1<p<\infty$). The original space $Z$ is biLipschitz equivalent to $\partial\Om$,
and the Besov space $B^\theta_{p,p}(Z)$ is the trace space of of the Sobolev class
$N^{1,p}(\Om)$. Indeed, it is shown there that with the same domain $\Om$, for each choice of $1<p<\infty$ and $0<\theta<1$,
there is a choice of measure $\mu$ on $\Om$ satisfying the above properties. Moreover, for that choice of measure $\mu$
we also have from~\cite[Theorem~10.3, Theorem~11.3, Theorem~12.1]{BBS} that when $x\in Z=\partial\Om$ and $0<r<2\diam(Z)$.
\[
\nu(B(x,r))\approx\, \frac{\mu(B(x,r))}{r^\Theta}\, \text{ with }\theta=\frac{p-\Theta}{p},
\]
Hence, \emph{every} compact doubling metric measure space $(Z,d,\nu)$
arises as the boundary of a metric measure
space $(\Om,d,\mu)$ with $\Om$ a uniform domain and $(\Om,d, \mu)$ doubling metric measure space supporting a $p$-Poincar\'e
inequality, with the link between the measure $\mu$ on $\Om$ and the doubling measure $\nu$ on $Z$ given in terms of the
codimensionality condition~\eqref{eq:Co-Dim-intro}.  Since $(\Om,d,\mu)$ satisfies properties~(H0),~(H1), and (H2), we can then fix  a
Cheeger differential structure on $\Om$ as described in Section~2, and apply all the results in the previous sections.
We will use this notation for the rest of the section.

One natural norm on the Besov class $B^\theta_{p,p}(Z)$ corresponds to a form
$\mathcal{E}_p$ given by
\[
\calE_p(u,v)
=\int_{Z}\int_{Z}\frac{|u(y)-u(x)|^{p-2}(u(y)-u(x))(v(y)-v(x))}{d(x,y)^{p\theta}\nu(B(y,d(x,y)))}\, d\nu(x)\, d\nu(y).
\]
Note that when $u,v\in B^\theta_{p,p}(\partial\Omega)$, we have that $\calE_p(u,v)\in\R$ and that
$\calE_p(u,u)=\Vert u\Vert_{\theta,p}^p$. Moreover, $\calE_p(u,u)=0$ if and only if $u$ is constant $\nu$-a.e.~in $\partial\Omega$.
While $\calE_p$ is not a bilinear form nor is symmetric in general, it is both bilinear and symmetric when $p=2$.
However, there are other comparable forms in the Besov class, see for example~\cite{HMY, KRS}. Given the
preceding results of this paper, we have another equivalent form on $B^\theta_{p,p}(Z)$  that is more adapted to
seeing this Besov space as a trace space. For $u,v\in B^\theta_{p,p}(Z)$, we set
\[
\calE_T(u,v):=\int_\Omega|\nabla \widehat{u}|^{p-2}\nabla\widehat{u}\cdot\nabla\widehat{v}\, d\mu,
\]
where $\widehat{u}\in N^{1,p}(\Omega)$ is such that the trace $T\widehat{u}=u$ and $\widehat{u}$ is Cheeger
$p$-harmonic in $\Omega$ (that is, $\widehat{u}$ solves the Dirichlet problem for the Cheeger $p$-Laplacian
on $\Om$ with boundary data $u$). Note that $\calE_T$ is bilinear if and only if $p=2$; otherwise, it is only
linear in the second entry.

\begin{lem}\label{lem:ETvsE}
There exists $C\ge 1$, depending only on the structure constants, such that for each $u\in B^\theta_{p,p}(\partial\Omega)$, we have
\[
\frac{1}{C}\, \calE_T(u,u)\le \calE_p(u,u)\le C\, \calE_T(u,u).
\]
\end{lem}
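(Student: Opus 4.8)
The plan is to recognize both sides as genuine energies and then exploit the fact that $\calE_p$ and $\calE_T$ are both invariant under adding a constant to $u$, together with the trace/extension estimates of Theorem~\ref{thm:Maly-Trace}. Recall that $\calE_p(u,u)=\Vert u\Vert_{\theta,p}^p$ and $\calE_T(u,u)=\int_\Om|\nabla\widehat u|^p\,d\mu$, where $\widehat u\in N^{1,p}(\Om)$ is the Cheeger $p$-harmonic function with $T\widehat u=u$. If $c$ is a constant, then the Cheeger $p$-harmonic extension of $u+c$ is $\widehat u+c$, so $\nabla\widehat u$ is unchanged; and $\Vert\cdot\Vert_{\theta,p}$ depends on $u$ only through the increments $u(y)-u(x)$. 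Hence it suffices to prove each of the two inequalities after normalizing $u$ by an additive constant in whichever way is convenient for that inequality. (We also note in passing that $\widehat u$ is well defined for every $u\in B^\theta_{p,p}(\partial\Om)$: the extension $Eu$ of Theorem~\ref{thm:Maly-Trace} lies in $N^{1,p}(\Om)=N^{1,p}(\overline\Om)$ with $T(Eu)=u$, and the Cheeger $p$-harmonic function agreeing with $Eu$ off $\Om$ exists by direct minimization and has trace $u$.)

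For the inequality $\calE_T(u,u)\le C\,\calE_p(u,u)$, I would normalize $u$ so that $\jint_{\partial\Om}u\,d\nu=0$. Since $\widehat u$ has the least Cheeger $p$-energy among $N^{1,p}$-extensions of $u$ (in particular no larger than that of the competitor $Eu$), and since the Cheeger norm $|\nabla w|$ is comparable to the minimal $p$-weak upper gradient $g_w$ with a constant depending only on the Cheeger structure, we get
\[
\calE_T(u,u)=\int_\Om|\nabla\widehat u|^p\,d\mu\le\int_\Om|\nabla(Eu)|^p\,d\mu\le C\int_\Om g_{Eu}^p\,d\mu\le C\,\Vert Eu\Vert_{N^{1,p}(\Om)}^p\le C\,\Vert u\Vert_{B^\theta_{p,p}(\partial\Om)}^p,
\]
the last step by the boundedness of the extension operator $E$. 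Because of the normalization, Lemma~\ref{lem:Lp-BesovE} gives $\Vert u\Vert_{L^p(\partial\Om)}\le C\,\Vert u\Vert_{\theta,p}$, hence $\Vert u\Vert_{B^\theta_{p,p}(\partial\Om)}\le C\,\Vert u\Vert_{\theta,p}$, and therefore $\calE_T(u,u)\le C\,\Vert u\Vert_{\theta,p}^p=C\,\calE_p(u,u)$.

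For the reverse inequality $\calE_p(u,u)\le C\,\calE_T(u,u)$, I would instead normalize so that $\int_\Om\widehat u\,d\mu=0$, which is possible since replacing $u$ by $u+c$ changes $\int_\Om\widehat u\,d\mu$ by $c\,\mu(\Om)$. With this normalization the global Poincar\'e inequality on the bounded domain $\Om$ (see~\eqref{eq:PI-zeroMean} and Lemma~\ref{lem:control-via-data}) yields $\Vert\widehat u\Vert_{L^p(\Om)}\le C\,\Vert g_{\widehat u}\Vert_{L^p(\Om)}\le C\,\Vert\nabla\widehat u\Vert_{L^p(\Om)}$, so $\Vert\widehat u\Vert_{N^{1,p}(\Om)}\le C\,\Vert\nabla\widehat u\Vert_{L^p(\Om)}$. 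Applying the boundedness of the trace operator $T$ from Theorem~\ref{thm:Maly-Trace} to $\widehat u$ then gives
\[
\calE_p(u,u)^{1/p}=\Vert u\Vert_{\theta,p}\le\Vert u\Vert_{B^\theta_{p,p}(\partial\Om)}=\Vert T\widehat u\Vert_{B^\theta_{p,p}(\partial\Om)}\le C\,\Vert\widehat u\Vert_{N^{1,p}(\Om)}\le C\,\Vert\nabla\widehat u\Vert_{L^p(\Om)}=C\,\calE_T(u,u)^{1/p},
\]
and raising to the $p$-th power completes the argument.

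The only real subtlety — and hence the step I expect to require the most care — is that the trace and extension bounds of Theorem~\ref{thm:Maly-Trace} involve the full (inhomogeneous) norms $\Vert\cdot\Vert_{N^{1,p}}$ and $\Vert\cdot\Vert_{B^\theta_{p,p}}$, whereas $\calE_T$ and $\calE_p$ are homogeneous quantities; the two different constant-normalizations above are exactly what is needed to absorb the leftover $L^p$-terms, using Lemma~\ref{lem:Lp-BesovE} on the boundary side and the Poincar\'e inequality on the $\Om$ side. One should also keep track of the fact that the competitor class defining the Cheeger $p$-harmonic minimizer $\widehat u$ contains $Eu$, which is all that is invoked for the first inequality, so no separate identification of $N^{1,p}_0(\Om)$ with the zero-trace subspace is actually needed for the argument.
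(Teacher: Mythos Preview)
Your argument is correct and follows the same overall strategy as the paper: bound $\calE_T(u,u)$ above by the Cheeger energy of the extension $Eu$ (using that $\widehat u$ is an energy minimizer among extensions with trace $u$), and bound $\calE_p(u,u)=\Vert T\widehat u\Vert_{\theta,p}^p$ above by the Cheeger energy of $\widehat u$ via the trace operator.

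The only difference is in how the homogeneous bounds are obtained. The paper's proof invokes \cite[Theorem~1.1]{M} directly in the homogeneous form, writing $\int_\Om|\nabla Eu|^p\,d\mu\le C\Vert u\Vert_{\theta,p}^p$ and $\Vert T\widehat u\Vert_{\theta,p}^p\le C\int_\Om|\nabla\widehat u|^p\,d\mu$ without further comment; these seminorm-to-seminorm estimates are indeed available from Mal\'y's construction, so no normalization step is needed. You instead start from the inhomogeneous bounds as stated in Theorem~\ref{thm:Maly-Trace} and recover the homogeneous ones by exploiting the translation invariance of both $\calE_T$ and $\calE_p$, choosing a mean-zero normalization on $\partial\Om$ (via Lemma~\ref{lem:Lp-BesovE}) for one direction and on $\Om$ (via the Poincar\'e inequality~\eqref{eq:PI-zeroMean}) for the other. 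Your route is slightly longer but entirely self-contained given what the paper actually states, and your closing remark about $Eu$ being an admissible competitor for $\widehat u$ is the right observation to justify the minimality step.
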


\begin{proof}
From~\cite[Theorem~1.1]{M},
with $T:N^{1,p}(\Omega)\to B^\theta_{p,p}(\partial\Omega)$ the trace operator and
$E:B^\theta_{p,p}(\partial\Omega)\to N^{1,p}(\Omega)$ the extension operator, we have by the $p$-harmonicity
property of $\widehat{u}$ and by the fact that $T\widehat{u}=TEu$,
\[
\calE_T(u,u)\le \int_\Omega|\nabla Eu|^p\, d\mu\le C\, \Vert u\Vert_{\theta,p}^p=C\, \calE_p(u,u)
\]
and
\[
\calE_p(u,u)=\Vert T\widehat{u}\Vert_{\theta,p}^p\le C\, \int_\Omega|\nabla \widehat{u}|^p\, d\mu=C\, \calE_T(u,u). \qedhere
\]
\end{proof}

We are now ready to construct a Cheeger fractional $p$-Laplacian on $Z$ induced by the Cheeger $p$-Laplacian on $\Om$;
recall that $Z$ is seen as the boundary of $\Om$. This construction is given via the following theorem, and is analogous to the
notion of weak Laplacian $\Delta$.

\begin{prop}\label{thm:solving-fract-laplace}
For each $f\in L^{p'}(Z)$ with $\int_Zf\, d\nu=0$
there is a function $u_f\in B^\theta_{p,p}(Z)$ such that for each $\pip\in B^\theta_{p,p}(Z)$,
\[
\calE_T(u_f,\pip)=\int_Z\pip\, f\, d\nu.
\]
Moreover, there is a constant $C>0$, which depends solely on the structural constants of $\Om$ (or $Z$), such that
for each $f\in L^{p'}(Z)$,
\[
\calE_p(u_f,u_f)\le C\, \int_Z|f|^{p'}\, d\nu.
\]
 If $f\in L^q(Z)$ for sufficiently large $q$, then $u_f$ is H\"older continuous on $Z$.
\end{prop}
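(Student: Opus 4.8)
The plan is to realise $u_f$ as the trace on $\dOm$ of the solution of the Neumann boundary value problem with data $f$, identifying $Z$ with $\dOm$ and recalling that $\theta=1-\Theta/p$. First I would invoke Theorem~\ref{th:Neumann-reg}: since $f\in L^{p'}(\dOm)$ with $\int_{\dOm}f\,d\nu=0$, there is $U\in N^{1,p}(\Om)$, which we may normalize by $\int_\Om U\,d\mu=0$, solving the Neumann problem with data $f$; by Theorem~\ref{thm:equiv-intro} this means
\[
\int_\Om|\nabla U|^{p-2}\nabla U\cdot\nabla\phi\,d\mu=\int_{\dOm}\phi\,f\,d\nu \qquad\text{for all }\phi\in N^{1,p}(\overline{\Om}).
\]
Taking $\phi$ supported compactly in $\Om$, so that its trace on $\dOm$ vanishes, shows that $U$ is Cheeger $p$-harmonic in $\Om$. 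Set $u_f:=TU$, which lies in $B^\theta_{p,p}(\dOm)$ by Theorem~\ref{thm:Maly-Trace}. Since $U$ is Cheeger $p$-harmonic with $TU=u_f$, uniqueness of the Cheeger $p$-harmonic function with prescribed trace forces $\widehat{u_f}=U$. Then for any $\pip\in B^\theta_{p,p}(\dOm)$ its extension $\widehat{\pip}\in N^{1,p}(\overline{\Om})$ has trace $\pip$, so testing the Neumann identity against $\phi=\widehat{\pip}$ gives
\[
\calE_T(u_f,\pip)=\int_\Om|\nabla U|^{p-2}\nabla U\cdot\nabla\widehat{\pip}\,d\mu=\int_{\dOm}\widehat{\pip}\,f\,d\nu=\int_{Z}\pip\,f\,d\nu,
\]
which is the required identity.

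For the energy bound I would take $\pip=u_f$ above, so that $\calE_T(u_f,u_f)=\int_{\dOm}u_f\,f\,d\nu$. Since $\int_{\dOm}f\,d\nu=0$ we may subtract the $\nu$-average $(u_f)_{\dOm}$; then H\"older's inequality, the Besov--Poincar\'e inequality of Lemma~\ref{lem:Lp-BesovE}, and the relation $\calE_p(u_f,u_f)=\Vert u_f\Vert_{\theta,p}^p$ yield
\[
\calE_T(u_f,u_f)\le\Vert u_f-(u_f)_{\dOm}\Vert_{L^p(\dOm)}\,\Vert f\Vert_{L^{p'}(\dOm)}\le C\,\calE_p(u_f,u_f)^{1/p}\,\Vert f\Vert_{L^{p'}(\dOm)}.
\]
Combining this with the comparison $\calE_p(u_f,u_f)\le C\,\calE_T(u_f,u_f)$ from Lemma~\ref{lem:ETvsE} and absorbing the factor $\calE_p(u_f,u_f)^{1/p}$ into the left-hand side produces $\calE_p(u_f,u_f)^{1/p'}\le C\,\Vert f\Vert_{L^{p'}(\dOm)}$; raising to the power $p'$ gives $\calE_p(u_f,u_f)\le C\int_Z|f|^{p'}\,d\nu$.

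For the regularity claim, "sufficiently large $q$" means $q>\tfrac{Q-\Theta}{p-\Theta}$ when $p\le Q$; if $p>Q$ the conclusion is already immediate from the Morrey embedding $N^{1,p}(\overline{\Om})\hookrightarrow C^{0,1-Q/p}$. Under this integrability hypothesis on $f$, Theorem~\ref{thm:holder-intro} shows that $U$ is $(1-\eps)$-H\"older continuous on $\overline{\Om}$; restricting $U$ to $\dOm\subset\overline{\Om}$ and using Theorem~\ref{thm:Maly-Trace}(ii) to identify $TU$ with $U|_{\dOm}$ $\nu$-a.e., we conclude that $u_f$ is H\"older continuous on $\dOm$, hence on $Z$ under the biLipschitz identification. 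I expect the main point requiring care to be the identification $\widehat{u_f}=U$: one must check that the Neumann solution is genuinely Cheeger $p$-harmonic in the interior and that the Cheeger $p$-harmonic extension is single-valued on each trace class, so that $\widehat{\pip}$ is an admissible competitor in the Neumann identity; granting this, the remaining steps are routine manipulations with the trace theorem, the Besov--Poincar\'e inequality, and the comparison $\calE_T\approx\calE_p$.
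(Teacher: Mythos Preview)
Your proof is correct and follows essentially the same route as the paper: realize $u_f$ as the trace of the Neumann solution $U$, note that $U$ is Cheeger $p$-harmonic so $\widehat{u_f}=U$, and test the Neumann identity against $\widehat{\pip}$. The one minor variation is in the energy bound: the paper cites Lemma~\ref{lem:control-via-data} directly (which bounds $\int_\Om|\nabla U|^p\,d\mu$ via the trace theorem and Poincar\'e on $\Om$), whereas you stay on the boundary and use the Besov--Poincar\'e inequality of Lemma~\ref{lem:Lp-BesovE} together with $\calE_p\approx\calE_T$; both arguments are equivalent and yield the same estimate.
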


\begin{proof}
Given a function $f$ as in the hypothesis of the theorem, let $u_f\in N^{1,p}(\Om)$ be the solution to the Neumann boundary
value problem for the Cheeger $p$-Laplacian on $\Om$, with Neumann boundary data $f$. We will denote the trace of $u_f$
to the boundary $Z$ also by $u_f$.

Note that $u_f$ is Cheeger $p$-harmonic on $\Om$; hence in the construction of $\calE_T$, we have that
$\widehat{u_f}=u_f$, and so for $\pip\in B^\theta_{p,p}(Z)$ we have from Theorem~\ref{thm:equiv-intro} that
\[
\calE_T(u_f,\pip)=\int_\Om |\nabla u_f|^{p-2}\nabla u_f\cdot\nabla \widehat{\pip}\, d\mu=\int_Z\pip\, f\, d\nu.
\]
Moreover, by Lemma~\ref{lem:ETvsE},
\[
\calE_p(u_f,u_f)\le C\, \calE_T(u_f,u_f)=C\, \int_\Om|\nabla u_f|^p\, d\mu.
\]
Combining the above with Lemma~\ref{lem:control-via-data} yields the desired inequality
$\calE_p(u_f,u_f)\le C\, \int_Z|f|^{p'}\, d\nu$.
\end{proof}

\begin{remark}From Theorem~\ref{thm:equiv-intro}(c), we also know that
\[
 |\nabla u|^{p-2}\nabla u\cdot\nabla\eta_\epsilon\,d\mu\rightharpoonup -f\,d\nu.
\]
This behavior corresponds to the behavior of functions $u_f$ as identified in~\cite[page~1247]{CS}
for $p=2$ and $a=0$ (corresponding to $(-\Delta)^{1/2}$), see also~\cite[page~454]{FF} for the Carnot groups setting and~\cite{EbGKSS} for the setting of metric measure spaces with a doubling measure supporting a $2$-Poincar\'e
inequality. From
Theorem~\ref{thm:equiv-intro}(c), with the codimensionality between $\mu$ and $\nu$ given by the exponent $\Theta$,
we have $a=\Theta-1$ in~\cite{CS}. This justifies our definition of fractional $p$-Laplacian in Definition \ref{def:fract-Lap-construct-intro}.
\end{remark}

\begin{proof}[Proof of Theorem \ref{thm:main-fract-Lap-intro}]
We fix $p,\theta$ as in the statement of the theorem. Then, from the results of~\cite{BBS}, we know that there is
a uniform domain $\Om$, equipped with a doubling measure $\mu$ and supporting a $1$-Poincar\'e inequality,
such that $Z$ is biLipschitz equivalent to $\partial\Om$ and with $\nu$, $\mu$ linked via the co-dimensionality
condition~\eqref{eq:Co-Dim-intro} for $\Theta=p(1-\theta)$, as in the discussion in
Subsection~\ref{sub:hyp-fill}. With this choice of $\Om$ we
have the construction of $\calE_T$ as described at the beginning of this section, and the existence of $u_f$ now follows
from Proposition~\ref{thm:solving-fract-laplace}.

The H\"older regularity of $u_f$ follows from Theorem~\ref{thm:holder-intro} upon noting that in gaining H\"older estimates
for points $x,y\in Z$, we consider only the balls $B_i$ centered at points in $Z$, and for such balls we have that
$\mu(B)\approx \left(\rad(B)\right)^{(1-\theta)p}\nu(B)$, and so the relevant lower mass bound exponent for $\mu$ here is
$Q_Z+(1-\theta)p$. Thus, if $q>\max\{1, Q_Z/\theta\}$, then the hypotheses of Theorem~\ref{thm:holder-intro} is satisfied,
and the H\"older regularity of $u_f$ follows.

The $L^{p'}$ stability with respect to the boundary data   $f$ follows from Theorem~\ref{thm:stability-cheeger-intro}.

In order to prove the Harnack inequality,
let $\widehat{u}$ be the solution of the Dirichlet problem for the $p$-Laplacian
in $\Omega$, with boundary data $u$ in $Z$.
We first observe that as $u\ge 0$ on $Z=\partial\Om$, by the strong
maximum principle we have that either $\widehat{u}>0$ in $\Om$ or else $\widehat{u}$ is identically zero in $\Om$
(and hence in $Z$),
see~\cite[Corollary~6.4]{KS}. As the zero function trivially satisfies any Harnack inequality, we only focus on the case
that $u$ is not constant in $\Om$.
Next, we note that $\widehat{u}$
is actually $p$-harmonic in the open set $\Omega \cup W\subset\overline{\Om}$. In fact,
since $(-\Delta_p)^\theta u=f=0$ in $W$, then
for every function $\pip\in N^{1,p}(\Om)=N^{1,p}(\overline{\Om})$ with support contained in $\Om\cup W$, we have
from Theorem~\ref{thm:equiv-intro}(a) that $\int_{\Om\cup W}|\nabla \widehat{u}|^{p-2}\nabla\widehat{u}\cdot\nabla\pip\, d\mu=0$,
which tells us that $\widehat{u}$ is $p$-harmonic in the domain $\Om\cup W$ (seen as a domain in the metric space
$\overline{\Om}$). The normal derivative of $\widehat{u}$ vanishes identically in $W$, and
this allows us to use the results in~\cite{KS}.
Invoking~\cite[Corollary~7.3]{KS}, one has that $\widehat{u}$
satisfies a Harnack inequality on all balls $B$ such that $4B\subset \Om\cup W$.  In particular, for each such ball one has
\[
\sup_{B\cap W} u \le \sup_{B} \hat{u} \le C \inf_B \hat{u} \le C\inf_{B\cap W} u,
\]
concluding the proof.
\end{proof}

\begin{remark}
Bilinear forms such as $\calE_T$ and $\calE_p$, for $p=2$, correspond to a Hunt process or a jump process,
see for example~\cite{BBCK, CKW, FOT}.
\end{remark}

\subsection{The case $p=2$}\label{subsect:p=2Operator}

Much of the extant literature on fractional operators deal with the linear case $p=2$, as in~\cite{BG, BGMN, BGS, CS, CC,CSt, CG, CK1, CK2, CKW, EbGKSS,G}. The fractional Laplacian, $(\Delta)^\theta$ on $Z$, as considered there was studied using
spectral theory, and, in the case of~\cite{BG, BGMN,BGS,CS, CC,CSt, CG, EbGKSS}, was related to the behavior of
the harmonic extension of the solution to a higher-dimensional domain $X$ with the aid of the $2$-Poincar\'e inequality
on the lower-dimensional space $Z$. It was also shown there that the spectral construction of the fractional Laplacian operators
agree with the infinitesimal generator (see for example~\cite{FOT}) of the non-local bilinear form $\calE_2$.
In the current paper we give an intrinsic construction of a fractional Laplacian
on $Z$ by realizing it as the boundary of a John domain, and so in the case $p=2$ we have two approaches to constructing
the fractional Laplacian operators. In the case that $Z$ itself does not support a $2$-Poincar\'e inequality, the spectral
construction as described in~\cite{CS, EbGKSS} is not possible; however, the infinitesimal generator $\mathcal{A}$ of $\calE_2$
and the operator $\mathcal{A}_T$ constructed in the current paper both exist.
For completeness of discussion, we now consider the case $p=2$ and discuss
the construction of the infinitesimal generator $\mathcal{A}$ associated with the bilinear form $\mathcal{E}_2$, as considered
in the above-mentioned literature. To do so,
we need to consider $\mathcal{E}_2$ as a norm; however, it is not a norm on $B^\theta_{2,2}$, and hence we need to extend
the bilinear form to the homogeneous Besov classes; see Subsection~\ref{subsec2.4} for the relevant notions.

In the proposition below, we fix $p=2$, and note by $\mathcal{A}$ the infinitesimal generator associated with the
symmetric bilinear form $\mathcal{E}_2$ on $HB^\theta_{2,2}(\partial\Om)$. For functions $u\in B^\theta_{2,2}(\partial\Om)$,
we say that $u$ is in the domain of $\mathcal{A}$ if there is a function $f\in L^2(\partial\Om)$ such that for each
$v\in B^\theta_{2,2}(\partial\Om)$ we have $\mathcal{E}_2(u,v)=-\int_{\partial\Om} v\, f\, d\nu$. In this case we denote
$\mathcal{A} u=f$.

Since $\partial\Om$ is a bounded set and hence $\nu(\partial\Om)$ is finite, it follows that constant functions are in
$B^\theta_{2,2}(\partial\Om)$. Hence, by using the choice of $v=1$ in the defining identity of $\mathcal{A}u$ in the
above paragraph, we must have $\int_{\partial\Om}f\, d\nu=0$ if $\mathcal{A}u=f$.
Note that if $w,u\in B^\theta_{2,2}(\partial\Om)$ such that $u-w$ is constant on $\partial\Om$, then $u$ is in the domain
of $\mathcal{A}$ if and only if $w$ is; moreover,
$\mathcal{A}u=\mathcal{A}w$.

\begin{prop}
Let $f\in L^2(\partial\Om)$ such that $\int_{\partial\Om}f\, d\nu=0$. Then there exists $w_f\in B^\theta_{2,2}(\partial\Omega)$ such that
$\calA\, w_f=f$.
\end{prop}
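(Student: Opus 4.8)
The plan is to obtain $w_f$ by a direct minimization argument on the homogeneous Besov space $HB^\theta_{2,2}(\partial\Om)$, exactly parallel to the standard variational construction of a weak Laplacian associated with a Dirichlet form. First I would consider the functional
\[
J(v):=\tfrac12\,\calE_2(v,v)-\int_{\partial\Om}v\,f\,d\nu
\]
over $v\in HB^\theta_{2,2}(\partial\Om)$, where representatives are normalized so that $\int_{\partial\Om}v\,d\nu=0$ (this is legitimate since, by the observation preceding the proposition, the value of $\calA v$ is insensitive to additive constants, and since $\int_{\partial\Om}f\,d\nu=0$ the linear term is also insensitive to adding a constant to $v$). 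By Lemma~\ref{lem:Lp-BesovE}, on this normalized class we have $\|v\|_{L^2(\partial\Om)}\le C\|v\|_{\theta,2}=C\,\calE_2(v,v)^{1/2}$, so the linear term is controlled by $\|f\|_{L^2(\partial\Om)}\,\calE_2(v,v)^{1/2}$ and $J$ is coercive: $J(v)\ge \tfrac12\calE_2(v,v)-C\|f\|_{L^2}\calE_2(v,v)^{1/2}\to+\infty$ as $\calE_2(v,v)\to\infty$. Since $p=2$, $\calE_2$ is a symmetric nonnegative bilinear form, so $J$ is convex (indeed strictly convex on the normalized class, since $\calE_2(v,v)=0$ forces $v$ constant, hence $v=0$ there), and weakly lower semicontinuous because $\calE_2(v,v)=\|v\|_{\theta,2}^2$ is the square of the norm of the reflexive (in fact Hilbert, for $p=2$) Banach space $HB^\theta_{2,2}(\partial\Om)$ from Lemma~\ref{lem:Lp-BesovE}, and the linear functional $v\mapsto\int_{\partial\Om}vf\,d\nu$ is weakly continuous there (again via the $L^2$-bound of Lemma~\ref{lem:Lp-BesovE} and $f\in L^2$).

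Next I would extract a minimizer: take a minimizing sequence, note it is bounded in $HB^\theta_{2,2}(\partial\Om)$ by coercivity, pass to a weakly convergent subsequence using reflexivity, and conclude by weak lower semicontinuity that the weak limit $w_f$ realizes the infimum. Then the Euler--Lagrange equation follows in the usual way: for any $v\in B^\theta_{2,2}(\partial\Om)$ and $t\in\R$, the function $t\mapsto J(w_f+tv)$ is differentiable (it is a quadratic polynomial in $t$, since $\calE_2$ is bilinear) and attains its minimum at $t=0$, giving
\[
\calE_2(w_f,v)=\int_{\partial\Om}v\,f\,d\nu
\]
for all $v\in B^\theta_{2,2}(\partial\Om)$. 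By definition of the infinitesimal generator $\calA$ this says precisely $\calA\,w_f=f$. Here it is worth recording that one should test against $v\in B^\theta_{2,2}(\partial\Om)$, not merely the normalized subclass, but since both sides are unchanged under $v\mapsto v+\text{const}$ (the right side because $\int f\,d\nu=0$, the left because $\calE_2(w_f,\cdot)$ annihilates constants), the identity for normalized $v$ upgrades automatically to all $v\in B^\theta_{2,2}(\partial\Om)$.

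The main obstacle, and the only point requiring care, is making sure the abstract functional-analytic machinery is correctly anchored to the spaces at hand: namely that $\calE_2(v,v)$ really is (equivalent to) the square of a complete norm on $HB^\theta_{2,2}(\partial\Om)$ — which is exactly the content of Lemma~\ref{lem:Lp-BesovE}, so this is already in hand — and that the linear term is a bounded functional on that space, which again reduces to the Poincar\'e-type estimate \eqref{eq:Lp-Bes} together with $f\in L^2(\partial\Om)$ and $\nu(\partial\Om)<\infty$. Everything else — coercivity, convexity, weak lower semicontinuity, existence of a minimizer, and the first-variation computation — is then routine. One could alternatively phrase the whole argument via the Riesz representation theorem / Lax--Milgram on the Hilbert space $HB^\theta_{2,2}(\partial\Om)$ equipped with the inner product $\calE_2$, which is arguably cleaner for $p=2$: the functional $v\mapsto\int_{\partial\Om}vf\,d\nu$ is bounded linear, hence represented as $\calE_2(w_f,\cdot)$ for a unique $w_f$, and that $w_f$ is the desired element. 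I would present whichever of these two equivalent routes is shorter, but the minimization route has the advantage of being the one that transparently generalizes the pattern used elsewhere in the paper.
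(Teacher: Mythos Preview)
Your proposal is correct. Both routes you sketch---the direct minimization of $J(v)=\tfrac12\calE_2(v,v)-\int_{\partial\Om}vf\,d\nu$ and the Riesz representation argument---work, and the key analytic input in each is exactly what you identify: Lemma~\ref{lem:Lp-BesovE} makes $HB^\theta_{2,2}(\partial\Om)$ a Hilbert space under $\calE_2$, and the Poincar\'e-type estimate~\eqref{eq:Lp-Bes} together with $\int_{\partial\Om}f\,d\nu=0$ makes $\pip\mapsto\int_{\partial\Om}\pip f\,d\nu$ a bounded linear functional there.

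The paper takes your second route (Riesz representation) rather than your first. It opens by invoking the Neumann solution $u_f$ from Theorem~\ref{thm:equiv-intro} and the identity $\calA_T\,Tu_f=f$, but the actual existence argument for $w_f$ is purely the Riesz representation theorem applied to the bounded functional $\pip\mapsto\int_{\partial\Om}\pip f\,d\nu$ on the Hilbert space $HB^\theta_{2,2}(\partial\Om)$. Your variational presentation is a legitimate and equivalent alternative; it has the mild advantage of mirroring the $p\ne 2$ pattern used elsewhere in the paper, while the paper's Riesz argument is shorter in the $p=2$ setting. Either is acceptable, and since you explicitly flag the Riesz route as the cleaner option for $p=2$, you have in effect reproduced the paper's approach as well.
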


\begin{proof}
From~\cite{MS} we know that $u_f$ that satisfies any of the conditions set forth in Theorem~\ref{thm:equiv-intro} exists.
It follows from the above discussion that $\calA_T \, Tu_f=f$ and $\widehat{Tu_f}=u_f$. From the discussion above
we have 
\[
\bigg\vert\int_{\partial\Omega}\pip\, f\, d\nu\bigg\vert\le C_0\, \Vert\pip\Vert_{\theta,2}
=C_0\, \mathcal{E}_2(\pip,\pip)^{1/2},
\]
with $C_0$ depending on $f$.
We know that $\Vert\cdot\Vert_{\theta,2}$ is a norm on the homogeneous space
$HB^\theta_{2,2}(\partial\Omega)=B^\theta_{2,2}(\partial\Omega)/\hskip-.2cm\sim$, and
by above, $\pip\mapsto\int_{\partial\Omega}\pip\, f\, d\nu$ is a bounded linear map on $HB^\theta_{2,2}(\partial\Omega)$
because $\int_{\partial\Omega}\pip\, f\, d\nu=\int_{\partial\Omega}(\pip-c)\, f\, d\nu$ for each real number $c$.
By Lemma~\ref{lem:Lp-BesovE} we have that $HB^\theta_{2,2}(\partial\Omega)$ is a reflexive
Banach space, and so
it follows from the Riesz representation theorem that there is some function (up to a constant)
$w_f\in HB^\theta_{2,2}(\partial\Omega)$ such that for each $\pip\in HB^\theta_{2,2}(\partial\Omega)$,
\[
 \int_{\partial\Omega}\pip\, f\, d\nu=\calE_2(w_f,\pip). \qedhere
\]
\end{proof}

The symmetric non-local bilinear form $\mathcal{E}_2$, as described above, is part of a class of symmetric non-local
bilinear forms studied in~\cite{CKW}. The version of Poincar\'e inequality considered in~\cite[Definition~1.19]{CKW} is
tautological for the form $\mathcal{E}_2$ considered above with $\phi(r)=r^2$, and so by~\cite[Theorem~1.20]{CKW} we
have that an $\mathcal{E}_2$-harmonic function is necessarily H\"older continuous on its domain of harmonicity. Here,
from the discussion in~\cite[Section~2]{CKW}, a function $u$ is $\mathcal{E}_2$-harmonic in an open set $U\subset X$
if $u\in B^\theta_{2,2}(X)$ and for all $\pip\in B^\theta_{2,2}(X)$ with compact support in $U$ we have
$\mathcal{E}_2(u,\pip)=0$. In particular, our construction $w_f$ is $\mathcal{E}_2$-harmonic in an open set $U\subset X$
if $f=0$ on $U$; it then follows from~\cite[Theorem~1.20]{CKW} that $w_f$ is H\"older continuous on $U$. The
results in~\cite{CKW} do not extend to the case where $f$ is not zero on $U$, and so for more general $f$, we do not
know whether $w_f$ is H\"older continuous, but from Theorem~\ref{thm:holder-intro} above we know that $u_f$ is
indeed H\"older continuous when $f\in L^q(X)=L^q(\partial\Om)$ for $q$ sufficiently large.

\section{Reconciling construction of fractional Laplacian with~\cite{EbGKSS}} \label{Sec:Reconcile}

In Section~\ref{sec:construct-fractLap} we gave a possible construction of a fractional $p$-Laplacian operator
$(-\Delta_2)^\theta$ on a doubling metric measure space $\partial\Omega$.
In the special case $p=2$, and with the additional hypothesis  that $\partial\Omega$ also supports a $2$-Poincar\'e inequality, an alternative construction based on spectral theory can be found in~\cite{EbGKSS}, which corresponds to the operator
$\mathcal{A}$ described in Subsection~\ref{subsect:p=2Operator}.
The aim of this section is to reconcile these two different approaches, and  show that the construction given in~\cite{EbGKSS}
gives rise to the same fractional operator we define in
Section~\ref{sec:construct-fractLap} above.

In~\cite{EbGKSS} the object of study was a compact doubling metric measure space $(Z,d_Z,\mu)$ that supports a $2$-Poincar\'e
inequality, and this metric space is naturally seen as the boundary of the unbounded domain
$Z\times(0,\infty)$, where $X=Z\times[0,\infty)$ is equipped with the $\ell_2$-product metric $d$.
Whereas, in our paper we consider the boundary of a \emph{bounded} domain. Therefore, to show that the two approaches
are not contradictory, we show that we can modify $X$ so that it becomes a bounded doubling metric measure space
supporting a $2$-Poincar\'e inequality and that functions that are $2$-harmonic in $Z\times(0,\infty)$ or $X$ are also
$2$-harmonic in this modified space, and that $Z$ is isometric to the boundary of this modified space.
We consider the metric $d_\infty$ on $X$ given by
$d_\infty((x_1,y_1),(x_2,y_2))=\max\{d(x_1,x_2),|y_1-y_2|\}$.

Fix $\beta>1$,
and set $\rho$ and $\omega$ to be the following continuous functions on $X$:
\[
\rho(x,y):=\min\{1, \, y^{-\beta}\}, \qquad \omega(x,y)=\min\{1,\, y^{-2\beta}\}.
\]
The metric $d_\rho$ on $X$ is given by
\[
d_\rho((x_1,y_1),(x_2,y_2))=\inf_\gamma\int_\gamma \rho\, ds,
\]
where the infimum is over all rectifiable curves in $X$ with end points $(x_1,y_1), (x_2,y_2)$. Since $\rho$ is a positive continuous
function and $X$ is complete, it follows that the topology generated by $d_\rho$ is the same as the topology generated by the
original metric on $X$. Moreover, for $(x_0,y_0)\in X$,
\[
\lim_{(x,y)\to (x_0,y_0)}\frac{d_\rho((x,y),(x_0,y_0))}{d((x,y),(x_0,y_0))}=\rho(x_0,y_0),
\]
and hence $(X,d_\rho)$ is a geodesic space.
The metric $d_\rho$ is motivated by the procedure of sphericalization as constructed in~\cite{BB}, see also~\cite{DL1,DL2,L,LS}.
However, the measure also needs to be modified, not as in~\cite{DL1,DL2,L,LS}, but in the manner of~\cite{BBL}.
The modified measure $\mu_\omega$ is given by
\[
\mu_\omega(A)=\int_A \omega\, d\mu_X
\]
where $\mu_X$ is the product measure on $X$ given by $d\mu_X(x,y)=y^a\, d\mu(x)\, dy$, with $a=1-2\theta$ as in~\cite{EbGKSS}.
From the construction, it is clear that the completion $\widehat{X}$ of $X$ with respect to the metric $d_\rho$ is compact.

We denote the arc-length measure on a curve $\gamma$ in $(X,d_\infty)$ by $ds$; then
under the deformed metric $d_\rho$, the arc-length measure $ds_\rho$ is given by $ds_\rho=\rho \, ds$. It follows
that if $g$ is an upper gradient of a function $u$ on $X$ or $Z\times(0,\infty)$, then $\rho^{-1} g$ is an upper gradient of $u$
on $(X,d_\rho)$ or $(Z\times(0,\infty),d_\rho)$. This is because
\[
\int_\gamma g_0\, ds_\rho=\int_\gamma g_0\, \rho\, ds
\]
whenever $g_0$ is a Borel function on $X$ or $Z\times(0,\infty)$. Observe that
\[
\int_X \left(\frac{g}{\rho}\right)^2\, d\mu_\omega=\int_X g^2\, d\mu_X,
\]
and so a family of curves $\Gamma$ is of zero $2$-modulus in $(X,d_\infty)$ if and only if it is of zero $2$-modulus
in $(X,d_\rho)$. Moreover, if $g_u$ is a minimal $2$-weak upper gradient of $u$ in $(X,d_\infty)$ and
$g_{u,\rho}$ is a minimal $2$-weak upper gradient of $u$ in $(X,d_\rho)$, then $g_{u,\rho}=\rho^{-1}\, g_u$,
with
\[
\int_X g_{u,\rho}^2\, d\mu_\omega=\int_X g_u^2\, d\mu_X.
\]
This means that the upper gradient energy is the same with respect to both metrics.

Note that $(X,d_\rho)$ is a bounded metric space, for
\[
\frac12\diam_\rho(X)\le \int_0^\infty \rho(x_0,y)\, dy=1+\int_1^\infty y^{-\beta}\, dy=1+\frac{1}{\beta-1}=\frac{\beta}{\beta-1}<\infty.
\]
On the other hand, $(X,d_\rho)$ is not complete; we complete it by including the ``point at infinity". So we set
$\widehat{X}=X\cup\{\infty\}$, with $d_\rho((x,y),\infty):=\int_y^\infty\rho(x,y)\, dy$, that is,
\begin{equation}\label{e:dist-infty}
d_\rho((x,y),\infty)=\begin{cases} \frac{1}{(\beta-1)\, y^{\beta-1}} &\text{ if }y\ge 1,\\
   \frac{\beta}{\beta-1}-y &\text{ if }0\le y\le 1. \end{cases}
\end{equation}
We wish to consider as the domain $\Omega$ the set $\widehat{X}\setminus (Z\times\{0\})$, and note that
the restriction of $d_\rho$ to $\partial\Omega=Z\times\{0\}$ is isometric to $d_Z$.
Observe that if $x_0\in Z$ and $(x_1,y_1)\in\widehat{X}$ such that
$d_\rho((x_0,0),(x_1,y_1))\le 1$,
then the $\rho$-geodesic connecting $(x_0,0)$ to $(x_1,y_1)$ must lie in the region $Z\times[0,1]$, and so
$d_\rho((x_0,0),(x_1,y_1))=d((x_0,0),(x_1,y_1))=\max\{d_X(x_1,x_0),|y_1|\}$. Hence for each $x_0\in Z$ and
$r\le 1$, we have
\[
\mu_\omega(B_\rho((x_0,0),r))=(1+a)^{-1}\ r^{1+a}\, \mu_Z(B(x_0,r)).
\]
Thus, for radii $r<1$, balls centered at
$(x_0,0)$ are of co-dimension $1+a$ measure with respect to $\mu_X$. Note that then we have $\Theta=1+a$ and
$\theta=\tfrac{1-a}{2}$, as required in the current note.

We now need to know what the
effect of the inclusion of the point at infinity has on the class of $2$-weak upper gradients.

\begin{lem}\label{lem:infty-point}
Let $\Gamma$ be the collection of all non-constant rectifiable curves in $(\widehat{X},d_\rho)$ that passes through
$\infty$. Then $\Mod_2(\Gamma)=0$.
\end{lem}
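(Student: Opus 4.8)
The plan is to construct, for every small $\delta>0$, a \emph{single} function that is admissible for $\Gamma$ and whose $L^2(\mu_\omega)$-energy tends to $0$ as $\delta\to0^+$; by the very definition of modulus this forces $\Mod_2(\Gamma)=0$.

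First I would record the geometry near the added point. By \eqref{e:dist-infty}, if $0<\delta<\tfrac1{\beta-1}$ then the ball $B_\rho(\infty,\delta)$ is contained in the region $\{y>Y_\delta\}$, where $Y_\delta:=((\beta-1)\delta)^{-1/(\beta-1)}>1$ and $d_\rho((x,y),\infty)=\tfrac1{(\beta-1)y^{\beta-1}}$. I would then take
\[
g_\delta(x,y):=\frac{\mathbf 1_{\{0<d_\rho((x,y),\infty)<\delta\}}}{d_\rho((x,y),\infty)} .
\]
To see that $g_\delta$ is admissible for $\Gamma$, fix $\gamma\in\Gamma$ and parametrize it by $d_\rho$-arclength, $\gamma:[0,L]\to\widehat X$. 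The function $h(t):=d_\rho(\gamma(t),\infty)$ is $1$-Lipschitz, hence continuous; it vanishes at some parameter because $\gamma$ passes through $\infty$, and $r:=\max_{[0,L]}h>0$ because $\gamma$ is non-constant, so by the intermediate value theorem $h$ takes every value in $(0,r)$. Using the area formula for the Lipschitz function $h$ together with $|h'|\le 1$ a.e.,
\[
\int_\gamma g_\delta\,ds_\rho=\int_0^L\frac{\mathbf 1_{(0,\delta)}(h(t))}{h(t)}\,dt\ \ge\ \int_0^{\min\{r,\delta\}}\frac{ds}{s}\ =\ +\infty\ \ge\ 1 .
\]

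It remains to estimate the energy. Recall $d\mu_X(x,y)=y^a\,d\mu(x)\,dy$ and $\omega(x,y)=y^{-2\beta}$ for $y>1$. On $\{y>Y_\delta\}$ we have $d_\rho((x,y),\infty)^{-2}=(\beta-1)^2 y^{2(\beta-1)}$ and $d\mu_\omega=y^{a-2\beta}\,d\mu(x)\,dy$, so
\[
\int_X g_\delta^2\,d\mu_\omega=(\beta-1)^2\,\mu(Z)\int_{Y_\delta}^\infty y^{a-2}\,dy=\frac{(\beta-1)^2\,\mu(Z)}{1-a}\;Y_\delta^{\,a-1}.
\]
Since $a=1-2\theta$ and $0<\theta<1$ we have $a<1$, so $a-2<-1$ (the integral converges) and $a-1<0$; as $\mu(Z)<\infty$ by compactness of $Z$ and $Y_\delta\to\infty$ as $\delta\to0^+$, the right-hand side tends to $0$. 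Letting $\delta\to0^+$ gives $\Mod_2(\Gamma)=0$.

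The only step requiring care is the admissibility verification: one must argue that a non-constant curve meeting $\infty$ necessarily sweeps through all sufficiently small positive distances to $\infty$, which is exactly where the $1$-Lipschitz continuity of $t\mapsto d_\rho(\gamma(t),\infty)$ along an arclength parametrization and the intermediate value theorem enter. Everything else is the explicit weight computation near the added point.
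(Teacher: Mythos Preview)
Your proof is correct. Both your argument and the paper's rest on the same idea—exhibit an admissible function supported in a neighborhood of $\infty$ whose $L^2(\mu_\omega)$-energy can be made arbitrarily small—but you implement it slightly differently. The paper first reduces $\Gamma$ to the families $\Gamma_L$ of curves joining $Z\times\{L\}$ to $\infty$ (via the subcurve property of modulus) and then uses the \emph{constant} test function $\rho_H=(\beta-1)H^{\beta-1}\chi_{Z\times[H,\infty)}$, checking admissibility from the fact that the $d_\rho$-length of any such curve inside $\{y\ge H\}$ is at least $d_\rho(Z\times\{H\},\infty)$. You instead take the ``logarithmic'' test function $g_\delta=d_\rho(\cdot,\infty)^{-1}\chi_{B_\rho(\infty,\delta)}$, which is admissible for all of $\Gamma$ at once because the integral along any curve through $\infty$ is already infinite; this avoids the reduction step entirely. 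The resulting energy integrals differ only by a harmless factor ($y^{2\beta-2}$ versus $H^{2\beta-2}$ in the integrand), and both decay like $H^{a-1}$ (equivalently $Y_\delta^{\,a-1}$) as the cutoff height tends to infinity. Your admissibility argument via the $1$-Lipschitz distance function and the Banach indicatrix/area formula is clean and correct; the only point to be careful about—that a non-constant rectifiable curve through $\infty$ actually attains all small positive distances to $\infty$—you handled explicitly.
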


\begin{proof}
It suffices to show that for each fixed $L\ge 1$, the collection $\Gamma_L$ of all curves with one end point in $Z\times\{L\}$ and the
other at $\infty$ satisfies $\Mod_2(\Gamma_L)=0$, for $\Gamma=\bigcup_{n\in\N}\Gamma_n$. For each $H>L$, we set
$\rho_H=(\beta-1)H^{\beta-1}\chi_{Z\times[H,\infty)}$. A direct calculation shows that $\rho_H$ is admissible for
computing $\Mod_2(\Gamma_L)$ as every curve in $\Gamma_L$ has a subcurve in $Z\times[H,\infty)$ with one
end point at $Z\times\{H\}$ and the other at $\infty$. Note that as $a<1$, we have $\tfrac{a+1}{2}<1<\beta$.
Therefore
\[
\Mod_2(\Gamma_L)\le \int_Z\int_H^\infty \rho_H^2\, d\mu_\omega=\mu(Z)\, (\beta-1)^2H^{2\beta-2}\int_H^\infty y^{a-2\beta}\, dy
   =\frac{(\beta-1)^2}{2\beta-a-1}\, \frac{1}{H^{1-a}}.
\]
Letting $H\to\infty$ and noting that $a<1$ yields the desired conclusion.
\end{proof}

From the above lemma, it is clear that a $2$-weak upper gradient of a function $u$ on $Z\times(0,\infty)$ (or $X$)
extends as a $2$-weak upper gradient of $u$ on $\Omega$ (or $\widehat{X}$). Moreover, the total Newton-Sobolev
$2$-capacity of $\{\infty\}$ is zero.

\subsection{Doubling property of $\mu_\omega$:}

We want to show the existence of a constant $C\ge 1$ such that whenever $0<r<1/4$,
$\mu_\omega(B_\rho((x_0,y_0),2r))\le C\mu_\omega(B_\rho((x_0,y_0),r))$. We first need the following lemma.

\begin{lem}\label{lem:Harnack-prop-w}
There exists $C_\omega>0$ such that whenever $B$ is a ball (with respect to the metric $d_\rho$)
in $\widehat{X}$ with radius $R>0$ and center $(x_0,y_0)$
such that $\infty\not\in 2B$, then for each $(x,y)\in B$ we have
\[
\frac{\omega(x_0,y_0)}{C_\omega}\le \omega(x,y)\le C_\omega\, \omega(x_0,y_0),
\qquad  \frac{\rho(x_0,y_0)}{C_\omega}\le \rho(x,y)\le C_\omega\, \rho(x_0,y_0).
\]
Moreover, for each $(x,y)\in B$ we have
\[
\frac{\rho(x_0,y_0)}{C_\omega}\, d((x_0,y_0),(x,y))\le d_\rho((x_0,y_0),(x,y))\le C_\omega\, \rho(x_0,y_0)\, d((x_0,y_0),(x,y)),
\]
and if in addition we have $y_0\ge 1$ and $y>1$, then
\begin{align}\label{eq:dRoh-d}
 \frac{1}{C_\omega(\beta-1)}\bigg\vert \frac{1}{y_0^{\beta-1}}-\frac{1}{y^{\beta-1}}\bigg\vert +\frac{1}{C_\omega\, y_0^\beta}d_Z(x,x_0)
&\le d_\rho((x,y),(x_0,y_0))\notag\\
&\le
 \frac{C_\omega}{\beta-1}\bigg\vert \frac{1}{y_0^{\beta-1}}-\frac{1}{y^{\beta-1}}\bigg\vert +\frac{C_\omega}{y_0^\beta}d_Z(x,x_0)
\end{align}
\end{lem}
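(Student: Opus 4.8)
The plan is to reduce the whole statement to one structural fact: the $d_\rho$-distance to the point $\infty$ depends on the height coordinate $y$ alone, is an explicit strictly decreasing function of $y$, and the weight $\rho$ (hence $\omega=\rho^{2}$) is a fixed, mildly varying function of that distance. \emph{Step 1 (comparability of $\rho$ and $\omega$).} Put $F(y):=d_\rho((x,y),\infty)$; since $\rho$ does not depend on the $Z$-coordinate, $F$ is a function of $y$ alone, given explicitly by \eqref{e:dist-infty}, and it is a decreasing bijection of $[0,\infty)$ onto $(0,\tfrac{\beta}{\beta-1}]$ with $F(y)=\tfrac{1}{(\beta-1)y^{\beta-1}}$ for $y\ge 1$. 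Solving for $y$ shows $\rho(x,y)=\psi(F(y))$ with $\psi(t)=\min\{1,((\beta-1)t)^{\beta/(\beta-1)}\}$, and $\psi$ is continuous, non-decreasing, and satisfies $\psi(\lambda t)\le\lambda^{\beta/(\beta-1)}\psi(t)$ for $\lambda\ge 1$. If $\infty\notin 2B$ then $d_\rho((x_0,y_0),\infty)\ge 2R$, hence $R\le\tfrac12 F(y_0)$, and the triangle inequality gives $\tfrac12 F(y_0)\le F(y)\le\tfrac32 F(y_0)$ for every $(x,y)\in B$; the growth bound on $\psi$ then yields $\rho(x,y)\approx\rho(x_0,y_0)$, and squaring gives $\omega(x,y)\approx\omega(x_0,y_0)$, with a constant $C_\omega$ depending only on $\beta$.

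\emph{Step 2 (comparability of $d_\rho$ and $\rho(x_0,y_0)\,d$).} Because $Z$ is complete, doubling and supports a $2$-Poincar\'e inequality it is quasiconvex, hence so is $X=Z\times[0,\infty)$ with respect to $d$. For the upper bound I would join $(x_0,y_0)$ to $(x,y)\in B$ by the $L$-shaped path formed by the vertical segment from $(x_0,y_0)$ to $(x_0,y)$ followed by a curve in $Z$ from $x_0$ to $x$ at height $y$ of length at most $C_{qc}\,d_Z(x,x_0)$; its $d_\rho$-length is at most $\big|\int_{y_0}^{y}\rho(\cdot,s)\,ds\big|+C_{qc}\,\rho(\cdot,y)\,d_Z(x,x_0)$, and since $\rho(\cdot,s)$ is monotone in $s$ and, at the two heights $y_0$ and $y$, is comparable to $\rho(x_0,y_0)$ by Step 1 (both endpoints lie in $B$), this is $\lesssim\rho(x_0,y_0)\,d((x_0,y_0),(x,y))$. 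For the lower bound I would use that any curve from $(x_0,y_0)$ to $(x,y)\in B$ of $d_\rho$-length $<R$ stays inside $B$, since each of its points is at $d_\rho$-distance $<R$ from the center; hence $\rho\gtrsim\rho(x_0,y_0)$ along such a curve by Step 1, so its $d_\rho$-length is at least a multiple of $\rho(x_0,y_0)$ times its $d$-length, which is $\ge\rho(x_0,y_0)\,d((x_0,y_0),(x,y))$. Since $d_\rho((x_0,y_0),(x,y))<R$ there do exist such competitor curves, so taking the infimum over them gives the matching lower bound.

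\emph{Step 3 (the refined estimate \eqref{eq:dRoh-d}).} When $y_0\ge 1$ and $y>1$ the interval between $y_0$ and $y$ lies in $[1,\infty)$, where $\rho(\cdot,s)=s^{-\beta}$ and $\int_{y_0}^{y}\rho(\cdot,s)\,ds=\tfrac{1}{\beta-1}\big(y_0^{1-\beta}-y^{1-\beta}\big)$. The upper bound in \eqref{eq:dRoh-d} follows from the same $L$-shaped path, evaluating the vertical integral exactly and using $\rho(\cdot,y)\approx\rho(x_0,y_0)=y_0^{-\beta}$ from Step 1. For the lower bound, take a curve $\gamma=(\sigma,h)$ from $(x_0,y_0)$ to $(x,y)$ of $d_\rho$-length $<R$, so $\gamma\subset B$; from $ds\ge|\sigma'|\,dt$, $ds\ge|h'|\,dt$ and $\rho(\gamma(t))=\rho(\cdot,h(t))\ge\rho(x_0,y_0)/C_\omega$ on $B$, the $d_\rho$-length of $\gamma$ dominates a fixed multiple of $\int\rho(\cdot,h)|\sigma'|\,dt+\int\rho(\cdot,h)|h'|\,dt$; the first term is $\gtrsim y_0^{-\beta}\,d_Z(x,x_0)$, while the second is at least $\big|\int\rho(\cdot,h)\,h'\,dt\big|=|\Phi(y)-\Phi(y_0)|=\tfrac{1}{\beta-1}\,|y_0^{1-\beta}-y^{1-\beta}|$, where $\Phi'(s)=\rho(\cdot,s)$ and the telescoping needs no monotonicity of $h$. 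Taking the infimum over such $\gamma$ (and, if needed, enlarging $C_\omega$) gives the lower bound in \eqref{eq:dRoh-d}. The one delicate point throughout is exactly this passage from the pointwise weight comparison of Step 1 to two-sided metric estimates: one must organize the argument around competitor curves of $d_\rho$-length $<R$ — legitimate precisely because $d_\rho((x_0,y_0),(x,y))<R$ on $B$ — so that such curves cannot escape $B$ and the bounds of Step 1 apply along them, and one must separate the vertical from the horizontal contribution in \eqref{eq:dRoh-d} through the antiderivative of $s\mapsto\rho(\cdot,s)$ rather than by assuming monotonicity of the competitor.
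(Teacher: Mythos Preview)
Your proof is correct. Steps~2 and~3 follow essentially the same line as the paper: construct an $L$-shaped comparison curve for the upper bounds, and for the lower bounds use that competitor curves of $d_\rho$-length below $R$ remain in $B$ so that the weight comparison from Step~1 applies along them. Your organization is slightly cleaner in Step~3, where you obtain the vertical contribution to the lower bound in \eqref{eq:dRoh-d} via the antiderivative $\Phi$ of $s\mapsto\rho(\cdot,s)$ and the identity $\int\rho(h)|h'|\,dt\ge\big|\int\rho(h)h'\,dt\big|=|\Phi(y)-\Phi(y_0)|$; this sidesteps any need to know whether the competitor's height ever dips below $1$, a point the paper handles instead by appealing to $\gamma_{\R}(t)\approx y_0$ on $B$.

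Step~1, however, is a genuinely different and more conceptual argument. The paper proceeds by a direct case analysis ($y_0\le 1$ versus $y_0>1$, with further sub-cases on $y$), computing explicit bounds on $y$ in each case. You instead encode everything in the single observation that $\rho=\psi\circ F$ with $F(y)=d_\rho(\cdot,\infty)$ and $\psi(t)=\min\{1,((\beta-1)t)^{\beta/(\beta-1)}\}$; the condition $\infty\notin 2B$ becomes $R\le\tfrac12 F(y_0)$, the triangle inequality pins $F(y)$ to $[\tfrac12 F(y_0),\tfrac32 F(y_0)]$, and the power-type growth of $\psi$ finishes. This is shorter, avoids splitting into cases, and makes transparent why the constant depends only on $\beta$. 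The paper's approach, on the other hand, extracts the explicit quantitative information $y\approx y_0$ (when $y_0>1$) as a by-product, which it then reuses later; your route recovers the same information implicitly through $F(y)\approx F(y_0)$.
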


\begin{proof}
If $y_0\le 1$, then $\omega(x_0,y_0)=1$.
If for all $(x,y)\in B$ we have that $y\le 1$, then $\omega(x,y)=1=\omega(x_0,y_0)$. Hence, without loss of generality, we may
assume that there is some $(x,y)\in B$ with $y>1$.
As $\infty\notin 2B$, we have
\[
2R\leq \int_{y_0}^\infty\rho dy = 1-y_0+\int_1^\infty y^{-\beta}dy=1-y_0+\frac{1}{\beta-1}.
\]
As $(x,y)\in B_R$,
\[
R>\int_{y_0}^y \rho dy=1-y_0+\frac{1}{\beta-1}(1-y^{1-\beta}).
\]
By combining the previous two estimates we get
\[
\frac{1-y_0}{2}+\frac{1}{2(\beta-1)}(1-2y^{1-\beta})<0.
\]
This can only be satisfied if the second term is negative. Thus we have to have
\[
y<2^{1/(\beta-1)}
\]
and so $\omega(x,y)\approx 1\approx\rho(x_0,y_0)$, satisfying the first claim of the lemma.

Now we consider the case $y_0>1$. As $\infty \notin 2B$, we obtain
\[
2R<\int_{y_0}^\infty\rho dy \leq \int_{y_0}^\infty y^{-\beta}dy=\frac{1}{\beta-1}y_0^{1-\beta}.
\]
If $y\geq 1$, then $(x,y)\in B$ implies that
\[
\left| \int_{y_0}^y y^{-\beta}dy\right|=\frac{1}{\beta-1}\left| y_0^{1-\beta}- y^{1-\beta} \right|<R.
\]
A combination of the above two estimates gives us that
\[
\frac12 y_0^{1-\beta}< y^{1-\beta} <\frac32 y_0^{1-\beta}.
\]
It follows that $y\approx y_0$, and so again $\omega(x,y)\approx \omega(x_0,y_0)$ and
$\rho(x,y)\approx \rho(x_0,y_0)$.
If $y\leq 1$, then as $(x_0,y_0)\in B((x,y),R)$, it follows by the discussion in the first paragraph above
that we have $\omega(x,y)=1\approx \omega(x_0,y_0)$. This completes the proof of the first claim.

Now we let $(x,y)\in B$, and note that any $d_\rho$-geodesic connecting
$(x_0,y_0)$ to $(x,y)$ lies in $B$; hence by the above, we have that
\[
\frac{1}{C_\omega} \rho(x_0,y_0)\, d((x,y), (x_0,y_0)) \le d_\rho((x,y),(x_0,y_0)).
\]
On the other hand, let $\gamma_Z$ be a geodesic curve in $Z$ with end points $x,x_0$ and parametrized to be from
the interval $[0,|y-y_0|]$ and with constant speed,
and $\gamma_\R$ be the vertical
line segment $\gamma_\R(t)=t+\min\{y,y_0\}$, for $0\le t\le |y_0-y|$. Let $\gamma$ be the curve given by
$\gamma(t)=(\gamma_Z(t),\gamma_\R(t)$
Note by the discussion above, we have that either $y_0\le 1$,
in which case $0\le y\le 2^{1/(\beta-1)}$ and so for each $t$ we have that $\omega(\gamma(t))\approx\omega(x_0,y_0)$,
or $y_0>1$, in which case $y\approx y_0$ and so again $\rho(\gamma(t))\approx\rho(x_0,y_0)$. Hence
\[
d_\rho((x,y),(x_0,y_0))\le \ell_\rho(\gamma)\le C\rho(x_0,y_0) \, d((x,y), (x_0,y_0)).
\]
Thus the second claim of the lemma is also verified.

To verify the last claim, we now suppose that $y\ge 1$ and $y_0\ge 1$.
Note that $\widehat{X}$ is a geodesic space, and hence every geodesic (with respect to the metric $d_\rho$)
connecting the center
$(x_0,y_0)$ of $B$ to $(x,y)\in B$ lies entirely in $B$. Let $\gamma:[0,L]\to X$ be such a geodesic, arc-length
parametrized with respect to the original metric $d$ on $X$. Then
with $\gamma(t)=(\gamma_Z(t),\gamma_{\R}(t))$, and as both $y_0,y\ge 1$,
\begin{align*}
d_\rho((x_0,y_0),(x,y))
=\int_0^L \gamma_{\R}(t)^{-\beta}\, \max\{|\gamma_Z^\prime|(t),|\gamma_{\R}^\prime|(t)\}\, dt
&\ge \int_0^L\gamma_{\R}(t)^{-\beta}|\gamma_{\R}^\prime|(t)\, dt\\
&\ge \frac{1}{\beta-1}\bigg\vert \frac{1}{y_0^{\beta-1}}-\frac{1}{y^{\beta-1}}\bigg\vert.
\end{align*}
Moreover, for each $t\in[0,L]$ we have that $\gamma_{\R}(t)\approx y_0$ by the argument above related to the first claim
of the lemma. It follows that
\[
d_\rho((x_0,y_0),(x,y))\ge \frac{1}{C_\omega\, y_0^\beta}\int_0^L |\gamma_Z^\prime|(t)\, dt
  \ge \frac{1}{C_\omega\, y_0^\beta} d_Z(x,x_0).
\]
Combining this with the above inequality yields the first of the two inequalities in~\eqref{eq:dRoh-d}. The right-hand side
of~\eqref{eq:dRoh-d} is obtained by considering the $\rho$-length of the curve $\gamma_0$ obtained by concatenating
the curve $\gamma_{0,\R}:[0,|y-y_0|]\to X$ given by $\gamma_{0,\R}(t)=(x_0,y_0+\tfrac{y-y_0}{|y-y_0|}t)$
with a geodesic $\gamma_{0,Z}(t)=(\gamma_{1,Z}(t),y)$ where $\gamma_{1,Z}$ is any geodesic in $Z$
with end points $x_0$, $x$.
\end{proof}

\begin{lem}
The measure $\mu_\omega$ is doubling on $\widehat{X}$.
\end{lem}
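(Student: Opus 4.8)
The plan is to verify the doubling inequality $\mu_\omega(B_\rho(c,2r))\le C\,\mu_\omega(B_\rho(c,r))$ by splitting, for a center $c\in\widehat X$ and radius $r$, according to whether the added point $\infty$ is near $B_\rho(c,r)$ relative to its radius. First I would record two easy reductions: $\mu_\omega(\widehat X)<\infty$ (integrate $\omega\,d\mu_X$ in $y$, using $a=1-2\theta>-1$ and $2\beta-a>1$), and, since $\widehat X$ is compact with $\mu_\omega$ of full support, $\inf_c\mu_\omega(B_\rho(c,\rho_0))>0$ for each fixed $\rho_0>0$; as $\widehat X$ also has finite diameter, doubling is automatic once $r$ exceeds a fixed fraction of $\diam_\rho(\widehat X)$, so it is enough to treat small $r$. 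Throughout write $\delta_0:=d_\rho(c,\infty)$.

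\emph{Interior balls, $\delta_0\ge 8r$.} Here $\infty\notin B_\rho(c,8r)$, so Lemma~\ref{lem:Harnack-prop-w} applies with $B=B_\rho(c,4r)$, giving $\omega\approx\omega(c)$, $\rho\approx\rho(c)$ and $d_\rho\approx\rho(c)\,d$ throughout $B_\rho(c,4r)$, where $d=d_\infty$ is the product metric of this section. I would then use that $(\widehat X,d_\rho)$ is geodesic and $(X,d)$ is quasiconvex (from the Poincar\'e inequality on $Z$) in a short bootstrap to sandwich $B_d(c,\tfrac{r}{C\rho(c)})\subset B_\rho(c,r)\subset B_\rho(c,2r)\subset B_d(c,\tfrac{Cr}{\rho(c)})$, all inside $B_\rho(c,4r)$. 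Hence $\mu_\omega(B_\rho(c,2r))\le C\,\omega(c)\,\mu_X(B_d(c,\tfrac{Cr}{\rho(c)}))$ and $\mu_\omega(B_\rho(c,r))\ge C^{-1}\omega(c)\,\mu_X(B_d(c,\tfrac{r}{C\rho(c)}))$; since $\mu_X$ is the product of the doubling measure $\mu$ on $Z$ with the doubling measure $y^a\,dy$ on $[0,\infty)$, $\mu_X$ is doubling on $(X,d)$ and the claim follows.

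\emph{Balls near infinity, $\delta_0<8r$.} For this range I would first compute, from~\eqref{e:dist-infty}, that for $0<R<\tfrac1{\beta-1}$ one has $B_\rho(\infty,R)=Z\times(Y_R,\infty)$ with $Y_R=((\beta-1)R)^{-1/(\beta-1)}$, so integrating $\omega\,d\mu_X$ yields
\[
\mu_\omega(B_\rho(\infty,R))=c_{a,\beta}\,R^{\gamma},\qquad \gamma=\frac{2\beta-a-1}{\beta-1}>0,
\]
which is in particular doubling in $R$. If $r\ge 2\delta_0$, then $B_\rho(\infty,r/2)\subset B_\rho(c,r)$ and $B_\rho(c,2r)\subset B_\rho(\infty,3r)$, and the power law closes the estimate. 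If instead $\delta_0/8<r<2\delta_0$ (so $r\approx\delta_0$), then $B_\rho(c,2r)\subset B_\rho(\infty,10r)$ has $\mu_\omega$-measure $\approx r^\gamma\approx\delta_0^\gamma$, and for the matching lower bound on $\mu_\omega(B_\rho(c,r))$ I would take the point $m$ on the $d_\rho$-geodesic from $c$ to $\infty$ with $d_\rho(m,c)=\delta_0/16$, so $B_\rho(m,\delta_0/16)\subset B_\rho(c,r)$ and $\infty\notin B_\rho(m,\delta_0/2)$; applying the interior estimate to $B_\rho(m,\delta_0/16)$ together with~\eqref{e:dist-infty} — which forces $d_\rho(m,\infty)\approx\delta_0$, hence the height $y_m\approx\delta_0^{-1/(\beta-1)}$ to exceed $\diam(Z)$ for $\delta_0$ small, so the relevant $d$-ball about $m$ is a full cylinder $Z\times(c_1y_m,c_2y_m)$ — yields $\mu_\omega(B_\rho(m,\delta_0/16))\approx\omega(m)\,y_m^{a+1}\approx\delta_0^\gamma$. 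Combining gives $\mu_\omega(B_\rho(c,2r))\le C\,\mu_\omega(B_\rho(c,r))$.

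The hardest point will be exactly this transitional regime $r\approx d_\rho(c,\infty)$: there neither Lemma~\ref{lem:Harnack-prop-w} nor a plain triangle-inequality comparison with balls centred at $\infty$ suffices on its own, and one must glue the two, with the boundedness of $Z$ entering crucially in order to identify the $\mu_\omega$-measure of a radius-$\delta_0$ ball near $\infty$ with the power $\delta_0^\gamma$ coming from the explicit computation at $\infty$.
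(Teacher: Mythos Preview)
Your proposal is correct and follows essentially the same three-regime strategy as the paper: reduce to small radii by compactness, handle balls far from $\infty$ via Lemma~\ref{lem:Harnack-prop-w} and the doubling of $\mu_X$, handle balls containing $\infty$ via the explicit power law $\mu_\omega(B_\rho(\infty,R))\approx R^{(2\beta-a-1)/(\beta-1)}$, and treat the transitional regime $r\approx d_\rho(c,\infty)$ by exhibiting inside $B_\rho(c,r)$ a set of $\mu_\omega$-measure $\approx\delta_0^\gamma$. The only cosmetic difference is that in the transitional case you pass to a sub-ball centred at a point $m$ on the geodesic to $\infty$ and re-apply the interior case, whereas the paper directly inscribes a cylinder $Z\times[y_0(1-\Delta),y_0(1+\Delta)]\subset B_\rho$; both routes produce the same lower bound.
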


In the proof below, each occurrence of $C_{\beta,a}$ denotes a possibly different constant, whose choice depends solely on
the parameters $\beta$ and $a$.

\begin{proof}
Let $B_\rho$ be a ball of radius $r>0$ in $\widehat{X}$.  Since $\widehat{X}$ is compact,
it suffices to prove the doubling property for balls of radius $r$ small enough, namely,
\begin{equation}\label{eq:r-limit}
0<r\le \min\bigg\lbrace1/4,\, \frac{1}{(8\diam(Z))^{\beta-1}(4(\beta-1))^\beta}\,,\, \frac{1}{8(\beta-1)}\bigg\rbrace.
\end{equation}
We consider three cases.

\noindent {\bf Case 1:} $\infty\in \tfrac12B_\rho$. In this case, note that $B_\rho(\infty,r/2)\subset B_\rho$ and
$2B_\rho\subset B_\rho(\infty, 3r)$. Note that when $0<R<1/(\beta-1)$, $B_\rho(\infty, R)=Z\times[H_R,\infty)$ with
\begin{equation}\label{eq:height-inftyBall}
H_R=\left(\frac{1}{(\beta-1)R}\right)^{1/(\beta-1)}.
\end{equation}
It follows that
\begin{align}\label{eq:ball-at-infty}
\mu_\omega(B_\rho(\infty,R))=\mu(Z)\, \int_{H_R}^\infty y^{a-2\beta}\, dy&=\frac{\mu(Z)}{2\beta-1-a}\, H_R^{1+a-2\beta}\notag\\
   &=\mu(Z)\, \frac{(\beta-1)^{\tfrac{2\beta-1-a}{\beta-1}}}{2\beta-1-a}\, R^{\tfrac{2\beta-1-a}{\beta-1}}.
\end{align}
Therefore,
\begin{align*}
\mu_\omega(2B_\rho)\le \mu_\omega(B_\rho(\infty,3r))=C_{\beta,a}\, \mu(Z)\, r^{\tfrac{2\beta-a-1}{\beta-1}}
 &= C_{\beta, a}\,
 \mu_\omega(B_\rho(\infty,r/2))\\
 &\le C_{\beta,a}\,
 \mu_\omega(B_\rho),
\end{align*}
that is, the doubling property holds for balls falling within this case, with doubling constant that depends only on $\beta$ and $a$.

\noindent {\bf Case 2:} $\infty\in 4B_\rho\setminus \tfrac12B_\rho$. Since we assume that $r$ is small enough
(see~\eqref{eq:r-limit}), we have that necessarily $y_0>1$. Since $r/4\le d_\rho((x_0,y_0),\infty)<4r$,
by~\eqref{e:dist-infty} it follows that
\begin{equation}\label{eq:y0-r}
\frac{r}{2}\le \frac{1}{(\beta-1)y_0^{\beta-1}}<4r.
\end{equation}
We fix a positive real number $\Delta$ (independently of $r$) such that
\[
\Delta<\min\bigg\lbrace 1,\, \left(\frac{16}{15}\right)^{\beta/(\beta-1)}-1\, ,\, 1-\left(\frac{16}{17}\right)^{1/(\beta-1)}\bigg\rbrace.
\]
Then by~\eqref{eq:r-limit} and~\eqref{eq:y0-r},
we see that $X\times [y_0(1-\Delta),y_0(1+\Delta)]\subset B_\rho$. A direct computation shows then
that
\begin{align*}
\mu_\omega(B_\rho)&\ge \mu_\omega(X\times [y_0(1-\Delta),y_0(1+\delta)])\\
  &=\frac{\mu(Z)}{(2\beta-1-a)y_0^{2\beta-1-a}}\left[(1-\Delta)^{1+a-2\beta}-(1+\Delta)^{1+a-2\beta}\right]\\
   &\approx r^{(2\beta-1-a)/(\beta-1)}.
\end{align*}
The comparison constant in the above depends only on the choice of $\Delta$ and on $\mu(Z)$, $\beta$.
As $\infty\in 4B_\rho$, we see that $4B_\rho\subset B_\rho(\infty, 8r)$. From the computations as in Case~1 above tells us that
\[
\mu_\omega(2B_\rho)\le \mu_\omega(B_\rho(\infty, 8r))=\mu(Z)\, C_{\beta,a}\
r^{(2\beta-1-a)/(\beta-1)}.
\]
Combining this estimate with the antecedent estimate gives $\mu_\omega(2B_\rho)\lesssim \mu_\omega(B_\rho)$.

\noindent {\bf Case 3:} $\infty\not\in 4B_\rho$. In this case, by the first part of
Lemma~\ref{lem:Harnack-prop-w} we know that
$\mu_\omega(2B_\rho)\approx \omega(x_0,y_0)\mu(2B_\rho)$ and $\mu_\omega(B_\rho)\approx\omega(x_0,y_0)\mu(B_\rho)$.
Therefore it suffices to show that $\mu(2B_\rho)\le C\, \mu(B_\rho)$ for some constant $C$ that does not depend on $B_\rho$.
Now by the last part of the lemma, we know that for $(x,y)\in 2B_\rho$ we have that
\[
d_\rho((x_0,y_0),(x,y))\approx y_0^{\beta-1}d((x,y),(x_0,y_0)).
\]
Thus $2B_\rho\subset B((x_0,y_0),2Cry_0^\beta)$
and $B((x_0,y_0),ry_0^\beta/C)\subset B_\rho$. Now the doubling property of $\mu$ yields the desired inequality, with the doubling
constant depending only on the doubling constant of $\mu$ and the constant $C$ used here.
\end{proof}

\subsection{Modification of the norm on Cheeger differential structure}\label{subsect-Cheeger-mod}

The paper~\cite{EbGKSS} considers $D$ to be a (fixed choice of) Cheeger differential structure on $Z$. The existence of such a
structure is guaranteed from the results in~\cite{Che}. The corresponding differential structure $\nabla$ on $X$ is obtained as a
tensorization of the structure $D$ with the Euclidean differential structure on $[0,\infty)$. When the metric on $X$ is modified
as considered in this section, the norm associated with the differential structure on $X$ is also changed. Note that the norm, as
considered in Cheeger in~\cite{Che}, has the property that
\[
|\nabla u(x,y)|_{(x,y)}\approx g_u(x,y)
\]
for $\mu\times\mathcal{L}^1$-a.e.~$(x,y)\in X$; here, $g_u$ is a $2$-weak upper gradient of $u$ with respect to the
original (unmodified) metric. Since $\rho^{-1}g_u$ is a $2$-weak upper gradient of $u$ with respect to the metric $d_\rho$ and
the measure $\mu_\omega$, it follows that the norm on the inner product structure $\nabla$ should change accordingly in order
to preserve the above comparison. Hence we choose
\[
|\nabla u(x,y)|_{\rho, (x,y)}=\rho(x,y)^{-1}|\nabla u(x,y)|_{(x,y)}.
\]
With this modification, we have that for each Borel $A\subset \widehat{X}$,
\[
\int_A \langle \nabla u(x,y), \nabla v(x,y)\rangle_{\rho,(x,y)} \mu_\omega(x,y)
=\int_A\langle \nabla u(x,y), \nabla v(x,y)\rangle_{(x,y)}\, d(\mu\times\mathcal{L}^1)(x,y).
\]

\subsection{$2$-Poincar\'e inequality with respect to $d_\rho$ and $\mu_\omega$}

It was shown in~\cite[Proof of Lemma~3.1]{EbGKSS} that as $Z$ supports a $2$-Poincar\'e inequality, the
product space $X=Z\times(0,\infty)$
also supports a $2$-Poincar\'e inequality. In this subsection we will show that the modified space $\widehat{X}$ also
supports a $2$-Poincar\'e inequality.

\begin{prop}
The space $(\widehat X, d_{\rho},\mu_{\omega})$ supports a $(1,2)$-Poincar\'e inequality.
\end{prop}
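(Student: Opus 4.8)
The plan is to transfer the $(1,2)$-Poincar\'e inequality of $(X,d,\mu_X)$ (established in the proof of~\cite[Lemma~3.1]{EbGKSS}) to $(\widehat X,d_\rho,\mu_\omega)$, using three facts already in hand: the pointwise identity $g_{u,\rho}^2\,d\mu_\omega=g_u^2\,d\mu_X$ coming from $g_{u,\rho}=\rho^{-1}g_u$ and $d\mu_\omega=\rho^2\,d\mu_X$ (so the deformation is exactly $2$-energy preserving, with $g_u$, $g_{u,\rho}$ the minimal $2$-weak upper gradients with respect to $d,\mu_X$ and $d_\rho,\mu_\omega$ respectively); the doubling property of $\mu_\omega$ on $\widehat X$ just proved; and Lemma~\ref{lem:infty-point}, which makes $\{\infty\}$ negligible for the $2$-modulus and hence for weak upper gradients. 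Since $\widehat X$ is compact and $\mu_\omega$ is doubling, a routine chaining argument (see e.g.~\cite[Chapter~8]{HKST}) reduces the task to proving the $(1,2)$-Poincar\'e inequality for balls $B_\rho=B_\rho((x_0,y_0),r)$ of radius $r$ below the fixed threshold~\eqref{eq:r-limit}; larger balls are then recovered by chaining a bounded number of small ones. I would then split into two cases according to the position of $\infty$, parallel to the proof that $\mu_\omega$ is doubling.

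\textbf{Case A: $\infty\notin K B_\rho$}, where $K$ is a fixed constant chosen large (in particular, larger than twice the square of the dilation constant of the Poincar\'e inequality on $X$, so that the dilates below stay in the good region). On $K B_\rho$, Lemma~\ref{lem:Harnack-prop-w} gives $\rho\approx\rho_0:=\rho(x_0,y_0)$, $\omega=\rho^2\approx\rho_0^2$ and $d_\rho\approx\rho_0\,d$, so that $B_\rho$ is comparable, up to fixed dilation factors, to the $d$-ball $B':=B((x_0,y_0),r/\rho_0)$, and $\mu_\omega\approx\rho_0^2\,\mu_X$ on $K B_\rho$. Applying the Poincar\'e inequality of $(X,d,\mu_X)$ on an appropriate dilate of $B'$, rewriting radii and averages through these comparisons, invoking the energy identity $g_{u,\rho}^2\,d\mu_\omega=g_u^2\,d\mu_X$, and using the doubling of $\mu_\omega$ to pass between $B_\rho$, $B'$ and their dilates, one finds that the radius factor $r/\rho_0$ multiplied by the gradient rescaling factor $\rho_0$ gives back $r$, i.e. all powers of $\rho_0$ cancel, and we arrive at
\[
\jint_{B_\rho}|u-u_{B_\rho}|\,d\mu_\omega\le C\,r\left(\jint_{\lambda B_\rho}g_{u,\rho}^2\,d\mu_\omega\right)^{1/2}
\]
for a fixed dilation $\lambda$.

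\textbf{Case B: $\infty\in K B_\rho$}, which is the heart of the argument. By Lemma~\ref{lem:infty-point} we may work on $Z\times(0,\infty)$, and $B_\rho$ is comparable to a ball $B_\rho(\infty,R)=Z\times[H_R,\infty)$ centered at $\infty$, whose measure obeys the power law $\mu_\omega(B_\rho(\infty,R))\approx R^{(2\beta-1-a)/(\beta-1)}$ from~\eqref{eq:ball-at-infty}. I would take a Whitney-type decomposition of $B_\rho(\infty,R)\setminus\{\infty\}$ into balls $Q$ with $\rad(Q)\approx\varepsilon\,d_\rho(Q,\infty)$ for a small fixed $\varepsilon$, so that consecutive Whitney balls along a chain lie in a common ball to which Case~A applies; then, chaining radially outward from a generic point of $B_\rho(\infty,R)$ to a fixed Case~A ball $B^\ast$ of radius $\approx R$ in the outer shell $Z\times\{H_R\}$ and telescoping, one estimates $|u_Q-u_{B^\ast}|$ by a sum of Case~A estimates over the Whitney balls traversed. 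Averaging this pointwise bound over $B_\rho(\infty,R)$ against $\mu_\omega$, and collapsing the gradient integrals via the bounded overlap of the Whitney balls together with H\"older's inequality, yields the desired estimate. The convergence of the resulting double series is precisely where the standing relations $\beta>1$ and $a<1$ enter, through the exponent $(2\beta-1-a)/(\beta-1)$ of~\eqref{eq:ball-at-infty} and the geometric decay of the shell measures it provides; carrying out this bookkeeping uniformly in the ball is the main obstacle. (Alternatively, one may recognize $B_\rho(\infty,R)$ as a ball in a sphericalization/flattening-type deformation of the end $Z\times[1,\infty)$ and invoke the bounded-geometry preservation results of~\cite{LS,BBL}.)

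Combining Cases A and B gives the $(1,2)$-Poincar\'e inequality for all balls of radius below~\eqref{eq:r-limit}, and hence, by the reduction at the beginning, for $(\widehat X,d_\rho,\mu_\omega)$.
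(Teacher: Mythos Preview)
Your outline is correct and structurally matches the paper's proof: both reduce to two cases according to whether $\infty$ lies in a fixed dilate of $B_\rho$, handle the far-from-infinity case by Lemma~\ref{lem:Harnack-prop-w} to transfer the $(1,2)$-Poincar\'e inequality of $(X,d,\mu_X)$ (the cancellation of the $\rho_0$-factors you note is exactly what happens), and treat the near-infinity case by a telescoping chain whose convergence comes from the measure power law~\eqref{eq:ball-at-infty}.

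The one genuine difference is in the near-infinity decomposition. You propose a Whitney-ball covering of $B_\rho(\infty,R)\setminus\{\infty\}$ with radial chaining to a fixed anchor ball $B^\ast$; the paper instead takes the dyadic annuli $A_k=B_\rho(\infty,2^{1-k}R)\setminus B_\rho(\infty,2^{-k}R)=Z\times[H_{2^{1-k}R},H_{2^{-k}R}]$ and telescopes over $k$. Because $Z$ is compact and each $A_k$ is a full product $Z\times[\,\cdot\,,\,\cdot\,]$ on which $\rho$ and $\omega$ are essentially constant, a single application of the Poincar\'e inequality on $Z$ and on the interval handles each $A_k\cup A_{k+1}$, and the entire bookkeeping you flag as ``the main obstacle'' collapses to the single geometric series $\sum_k\mu_\omega(A_k)/\mu_\omega(A_m)<\infty$. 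Your Whitney approach would work too, but it forces you to cover the $Z$-direction with balls as well and to verify bounded overlap in two directions; the annular version buys you a one-parameter chain and an explicit, short computation. The alternative you mention via~\cite{LS,BBL} is not used in the paper.
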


\begin{proof}
As the metric $d_{\rho}$ is geodesic, a weak Poincar\'e inequality implies the strong Poincar\'e inequality.
Thus it is enough to prove that for every $u\in D^{1,2}(\widehat X)$ and every ball $B\subset \widehat X$,
there exists a constant $c_{u,B}$ such that
\[
\jint_{B_\rho(x,r)}|u-c_{u,B}|\, d\mu_\omega\le C_P\, r\, \left(\jint_{B_\rho(x,\lambda r)}g_u^p\, d\mu_\omega\right)^{1/2},
\]
with some constants $C_{P}$ and $\lambda$ that only depend on the data of the space.
We consider separately two cases: The case, where the point $\infty$ is far away from $B$, and the case where it is close to $B$.

{\bf Case 1: $\infty\notin B_{\rho}((x_0,y_{0}),2C_\omega^2 r_0)$}, where $C_\omega$ is as in Lemma~\ref{lem:Harnack-prop-w}.
Then, with $\widetilde B=B((x_{0},y_{0}), C_\omega\rho(x_{0},y_{0})^{-1}r_{0})$,
by Lemma~\ref{lem:Harnack-prop-w} we have
\[
B_{\rho}=B_{\rho}((x_{0},y_{0}),r_{0})\subset \widetilde B\subset C_{\omega}^2B_{\rho}= B_{\rho}((x_{0},y_{0}),C_\omega^2r_{0}).
\]
Now, setting $c_{u,B}=\mu_Z(\widetilde B)^{-1}\int_{\widetilde{B}} u\, d\mu_X$,
we can estimate using Lemma~\ref{lem:Harnack-prop-w} and the Poincar\'e inequality of $(X,d_X,\mu_X)$ that
\[
\begin{split}
\int_{B_{\rho}}|u-c_{u,B}|d\mu_{\omega}\,
& \leq C_\omega\, \omega(x_{0},y_{0}) \int_{\widetilde B}|u-c_{u,B}|d\mu_{X} \,\\
&\leq C_\omega^2\, \omega(x_{0},y_{0}) \rho(x_{0},y_{0})^{-1}r_{0}\, \mu_{X}(\widetilde B)^{1/2}
\left( \int_{\widetilde B}g_{u}^2d\mu_{X}   \right)^{1/2}\\
&= C_\omega^2\,  \omega(x_{0},y_{0}) \rho(x_{0},y_{0})^{-1}r_{0} \mu_{X}(\widetilde B)^{1/2}
\left( \int_{\widetilde B}g_{u,\rho}^2d\mu_{\omega}   \right)^{1/2}\\
&\leq C_\omega^2\, r_{0}\, \mu_{\omega}(C_\omega^2B_{\rho})^{1/2}
\left( \int_{C_\omega^2B_{\rho}}g_{u,\rho}^2d\mu_{\omega}   \right)^{1/2}\\
\end{split}
\]
Here the first inequality follows from the inclusion $B_{\rho}\subset \widetilde B$ and from having
$\omega\approx\omega(x_{0},y_{0})$ in $\widetilde B$. The second inequality follows from the
$(1,2)$-Poincar\'e inequality for $(X,d,\mu_{X})$. In the last estimate, we used the facts that for $(x,y)\in B_\rho$,
$\omega(x,y)^{1/2}\approx\omega(x_{0},y_{0})^{1/2}= \omega(x_{0},y_{0})\rho(x_{0},y_{0})^{-1}$ in $\widetilde B$
and that $\widetilde B\subset C^2B_{\rho}$. Thus the Poincar\'e inequality is satisfied with
$\lambda = C_\omega^2$ and $C_{P}$ that depends on $C_\omega$ as well as the doubling constant of $\widehat X$
and the constants associated with the Poincar\'e inequality for $(X,d_X,\mu_X)$.

{\bf Case 2: $\infty\in B_{\rho}((x_0,y_{0}),2C_\omega^2 r_0)$}.

Now we have
$B_{\rho}((x_{0},y_{0}),r_{0})\subset B_{\rho}(\infty,(2C_\omega^2+1)r_{0})=Z\times(H_{(2C_\omega^2+1)r_{0}},\infty]$,
where $H_R=[(\beta-1)R]^{-1/(\beta-1)}$ as in~\eqref{eq:height-inftyBall}.
Therefore as we already know that $\mu_{\omega}$ is doubling, it is enough to prove that the balls
centered at $\infty$ satisfy the Poincar\'e inequality. Thus let us consider the ball
$B_{\rho}=B_{\rho}(\infty,R)$ with $R<1/(2\beta-2)$.

For $k=1,2,\ldots,$ let $A_{k}=B_{\rho}(\infty,2^{1-k}R)\setminus B_{\rho}(\infty,2^{-k}R)$. Then
from equation~\eqref{eq:ball-at-infty} above, we have
$\mu_{\omega}(A_{k+1})/\mu_{\omega}(A_{k})=\gamma=(\tfrac12)^{1+(\beta-a)/(\beta-1)}<1$.
We may assume without loss of generality that $u_{A_{1}}=0$ (with respect to the measure
$\mu_{\omega}$). Then our goal is to prove that
\[
\int_{B_{\rho}}|u|d\mu_{\omega}\leq C R \left(\int_{B_{\rho}}g_{u,\rho}^2d\mu_{\omega}\right)^{1/2}\mu_{\omega}(B_{\rho})^{1/2}.
\]
Note from the discussion around~\eqref{eq:height-inftyBall} that
$A_1=Z\times[H_R, H_{R/2}]$ with $H_R>2$, and so for each $(x,y)\in A_1$ we have that
\[
2^{-2\beta/(\beta-1)}H_R^{-2\beta}=H_{R/2}^{-2\beta}\le \omega(x,y)\le H_R^{-2\beta}.
\]
Note that $H_{R/2}-H_R=(2^{1/(\beta-1)}-1)H_R\ge \diam(Z)$.
Since $(Z,d_Z,\mu_Z)$ supports a $2$-Poincar\'e inequality and so does the Euclidean interval $[H_R,H_{R/2}]$
(equipped with the weighted $1$-dimensional Lebesgue measure $y^a\, dy$), we see that
\begin{equation}\label{eq:A1-est}
\int_{A_1}|u|\, d\mu_X=\int_{A_1}|u-u_{A_1}|\, d\mu_X\le C\, H_R\, \mu_X(A_1)^{1/2}\, \left(\int_{A_1}g_u^2\, d\mu_X\right)^{1/2}.
\end{equation}
Here we used the fact that $H_R>2>\diam(X)$, and so the $d_X$-diameter of $A_1$ is comparable to $H_R$.
Now given the above comparison $\omega((x,y))\approx H_R^{-2\beta}$, we obtain
\begin{align*}
\int_{A_1}|u|\, d\mu_\omega
&\le C H_R^{1-2\beta}\, \left(\int_{A_1}g_{u,\rho}^2\, d\mu_\omega\right)^{1/2} H_R^\beta\, \mu_\omega(A_1)^{1/2}\\
 & = C(\beta-1)\, R\, \left(\int_{A_1}g_{u,\rho}^2\, d\mu_\omega\right)^{1/2}\, \mu_\omega(A_1)^{1/2}.
\end{align*}
Now it is enough to prove that
$
\int_{B_{\rho}\setminus A_{1}} |u|d\mu_{\omega}\leq CR\int_{B_{\rho}}g_{u,\rho}d\mu_{\omega},
$
from which we can recover $(1,2)$-Poincar\'e inequality by using H\"older's inequality.

 As $\omega$ and $\rho$ are approximately constant in each $A_{k}\cup A_{k+1}$ and the $y$-directional width
 with respect to the metric $d_\rho$
 of $A_{k}\cup A_{k+1}$ is smaller than $2^{2-k}R$,
 the use of the $1$-dimensional
 Poincar\'e inequality on $\{x\}\times(0,\infty)$ for fixed $x\in X$ gives us
 \begin{align*}
|u_{A_{k+1}}-u_{A_{k}}|=\bigg\vert \jint_{A_{k+1}}u\, d\mu_\omega-\jint_{A_k}u\, d\mu_\omega\bigg\vert
 &\approx\bigg\vert \jint_{A_{k+1}}u\, d\mu_Z-\jint_{A_k}u\, d\mu_X\bigg\vert\\
 &\le C\, H_{R_k}\, \jint_{A_k\cup A_{k+1}}g_u\, d\mu_X\\
 &\approx C\, H_{R_k}\, \jint_{A_k\cup A_{k+1}}g_u\, d\mu_\omega\\
 &\approx C\, H_{R_k}^{1-\beta} \jint_{A_k\cup A_{k+1}}g_{u,\rho}\, d\mu_\omega\\
 &\approx C\, 2^{-k}\, R\, \jint_{A_k\cup A_{k+1}}g_{u,\rho}\, d\mu_\omega.
 \end{align*}
Here $u_{A_k}$, $k\in\N$, are computed with respect to the measure $\mu_\omega$.
Now we can apply a telescoping argument and~\eqref{eq:A1-est} to estimate that
\[
\begin{split}
\int_{B_{\rho}\setminus A_{1}} & |u|d\mu_{\omega}=  \sum_{k=2}^\infty \int_{A_{k}}|u|d\mu_{\omega}\\
\leq & \sum_{k=2}^\infty  \mu_{\omega}(A_{k}) \left(|u_{A_1}|+\sum_{m=1}^{k-1}|u_{A_{m}}-u_{A_{m+1}} |\right)\\
\leq &C \sum_{k=1}^\infty \sum_{m=1}^{k-1} \mu_{\omega}(A_{k}) 2^{-m}R \jint_{A_{m}\cup A_{m+1}} g_{u,\rho}d\mu_{\omega}
+C\, R\, \left(\jint_{A_1}g_{u,\rho}^2\, d\mu_\omega\right)^{1/2}\sum_{k=2}^\infty \mu_\omega(A_k)\\
\leq &C \sum_{k=1}^\infty \sum_{m=1}^{k-1} \mu_{\omega}(A_{k}) 2^{-m}R \jint_{A_{m}\cup A_{m+1}} g_{u,\rho}d\mu_{\omega}
+C\, R\, \left(\jint_{A_1}g_{u,\rho}^2\, d\mu_\omega\right)^{1/2}\mu_\omega(B_\rho\setminus A_1)\\
\leq & C \sum_{m=1}^\infty\sum_{k=m+1}^\infty 2^{-m}R
    \int_{A_{m}\cup A_{m+1}} g_{u,\rho}d\mu_{\omega} \frac{\mu_{\omega}(A_{k})}{\mu_{\omega}(A_{m})}
+C\, R\, \left(\jint_{A_1}g_{u,\rho}^2\, d\mu_\omega\right)^{1/2}\mu_\omega(B_\rho)\\
\leq & C\, \sum_{m=1}^\infty\sum_{k=m+1}^\infty 2^{-m}R \int_{A_{m}\cup A_{m+1}} g_{u,\rho}d\mu_{\omega}
+C\, R\, \left(\jint_{A_1}g_{u,\rho}^2\, d\mu_\omega\right)^{1/2}\mu_\omega(B_\rho)\\
\leq & C R \int_{B_{\rho}}g_{u,\rho}d\mu_{\omega} +C\, R\, \left(\jint_{A_1}g_{u,\rho}^2\, d\mu_\omega\right)^{1/2}\mu_\omega(B_\rho).
\end{split}
\]
Hence by H\"older's inequality and~\eqref{eq:A1-est},
\[
\int_{B_\rho}|u|\, d\mu_\omega\le C\, R\, \left(\jint_{B_\rho}g_{u,\rho}^2\, d\mu_\omega\right)^{1/2}\, \mu_\omega(B_\rho),
\]
which yields the $2$-Poincar\'e inequality.
\end{proof}

\subsection{John domain property}

In this section we will show that $\widehat{X}$ is a John domain. In fact, it is also a uniform domain (see for
example~\cite{GS}), but we do not need this stronger geometric condition in our paper.

\begin{prop}
The space $(\widehat{X}, d_\rho,\mu_\omega)$ is a John domain with John center $\infty$
and boundary $Z\times\{0\}$.
\end{prop}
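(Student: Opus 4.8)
The plan is to verify the John condition directly from the definition in Subsection~\ref{subsect:John}, taking $x_0 = \infty$ as the John center and exhibiting, for each point $(x,y) \in \widehat X$, an explicit curve joining $(x,y)$ to $\infty$ along which the distance-to-boundary lower bound $\dist_{X\setminus\Omega}(z) \ge C_J^{-1}\ell(\gamma[(x,y),z])$ holds. Here $\partial\Omega = Z\times\{0\}$, and we must keep track of the fact that the relevant metric is $d_\rho$ while lengths are measured as $\rho$-lengths. The natural candidate curve is the vertical segment $\gamma(t) = (x, t)$ for $t\in[y,\infty]$ (reparametrized so that the endpoint $\infty$ is reached in finite $\rho$-length, which is legitimate by~\eqref{e:dist-infty}): this moves the point straight up toward the point at infinity, away from the boundary $Z\times\{0\}$.

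First I would compute, for $z = (x,s)$ a point on this vertical curve with $s \ge y$, the two relevant quantities. The $\rho$-length of the segment of $\gamma$ from $(x,y)$ to $z$ is $\int_y^s \rho(x,t)\,dt$, which by the formula for $\rho$ is comparable to $\int_y^s \min\{1, t^{-\beta}\}\,dt$; an elementary computation shows this is $\lesssim d_\rho((x,y),\infty) - d_\rho(z,\infty) \le d_\rho((x,y),\infty)$, and more usefully it is at most a constant multiple of $\min\{s, s\cdot s^{-\beta+1}\}$-type expressions. On the other hand, $\dist_{X\setminus\Omega}(z) = d_\rho(z, Z\times\{0\})$, and by the computation already carried out in the excerpt just before Lemma~\ref{lem:infty-point} (the case $d_\rho((x_0,0),(x_1,y_1))\le 1$ gives $d_\rho = d = \max\{d_X,\,|y|\}$), for $s \le 1$ we have $d_\rho(z, Z\times\{0\}) \ge s$, while for $s \ge 1$ we have $d_\rho(z, Z\times\{0\}) \ge 1$. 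Comparing: when $s\le 1$, $\ell(\gamma[(x,y),z]) = \int_y^s 1\,dt = s-y \le s \le d_\rho(z,\partial\Omega)$, so the John inequality holds with constant $1$ on this portion; when $s\ge 1$, $\ell(\gamma[(x,y),z]) \le \ell(\gamma[(x,y),\infty]) = d_\rho((x,y),\infty) \le \diam_\rho(\widehat X) \le \beta/(\beta-1)$, a fixed constant, while $d_\rho(z,\partial\Omega)\ge 1$, so again the inequality holds with a fixed constant depending only on $\beta$. Taking $C_J$ to be the larger of these finitely many constants finishes the verification for points with $d_X(x,x_0)$-coordinate irrelevant (the vertical curve never changes the $Z$-coordinate, so $d_\rho(z,\partial\Omega)$ is controlled below by the vertical drop). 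One should also separately handle points $(x,y)$ with $y\le 1$ whose distance to $\partial\Omega$ is governed by the $Z$-direction, but since we only move upward, $d_\rho(z,\partial\Omega) \ge |s| $ still holds for all $z$ on the curve, so no extra case is needed.

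The main obstacle I anticipate is bookkeeping the constants cleanly across the two regimes $y\le 1$ and $y\ge 1$, and in particular making sure that the reparametrization of the vertical ray so that it reaches $\infty$ is compatible with the "rectifiable curve" requirement — one must check the ray has finite $\rho$-length all the way to $\infty$, which is exactly~\eqref{e:dist-infty}, and that the length function $t \mapsto \ell(\gamma[(x,y),(x,t)])$ is continuous and increasing so that the John inequality at intermediate points follows from its validity at the two ranges. A secondary, purely cosmetic issue is that the statement as written should perhaps say "John center $\infty$" in the sense of the completion $\widehat X = \Omega \cup (Z\times\{0\})$ with $\Omega = \widehat X \setminus (Z\times\{0\})$; once that identification is made, the argument above applies verbatim. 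No deep input is required beyond the distance formulas already established in this section and the boundedness $\diam_\rho(\widehat X) < \infty$.
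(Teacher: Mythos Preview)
Your proposal is correct and follows essentially the same approach as the paper: both use the vertical ray $\gamma(t)=(x,y+t)$ as the John curve to the center $\infty$ and compare $\ell_\rho(\gamma\vert_{[0,t]})$ against $\dist_{d_\rho}(\gamma(t),Z\times\{0\})=\int_0^{y+t}\rho\,ds$. The only difference is precision: the paper carries out the case split exactly and observes that $\dist_{d_\rho}(\gamma(t),Z\times\{0\})\ge \ell_\rho(\gamma\vert_{[0,t]})$ in every regime, yielding the sharp constant $C_J=1$, whereas your coarser bound via $\ell\le d_\rho((x,y),\infty)\le \beta/(\beta-1)$ for $s\ge 1$ gives $C_J=\beta/(\beta-1)$; the paper also includes the short argument (which you flag as cosmetic) that $\partial\widehat{X}$ consists of exactly $Z\times\{0\}$ and nothing else.
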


\begin{proof}
Since for $0<y\le 1$ and $x\in Z$ we have that $\dist_{d_\rho}((x,y),Z\times\{0\})=\dist_d((x,y), Z\times\{0\})=y$, it
follows that the boundary of $\widehat{X}$ contains $Z\times\{0\}$. Moreover, if $x_1,x_2\in Z$ such that $d_X(x_1,x_2)\le 1/2$,
then $d_\rho((x_1,0),(x_2,0))= d((x_1,0),(x_2,0))=d_X(x_1,x_2)$. We now show that there is no other boundary point for
$\widehat{X}$. If $((x_k,y_k))_k$ is a Cauchy sequence in $\widehat{X}$ that does not converge, then we must
necessarily have that there is some $\tau>0$ such that $y_k\le \tau$ for each $k$. This follows from~\eqref{e:dist-infty}.
That is, the sequence lies in $Z\times(0,\tau]$. From the fact that $Z$ is compact, it follows that we must have
$y_k\to 0$, that is, the sequence converges to a point in $Z\times\{0\}$. Hence $\partial\widehat{X}=Z\times\{0\}$.

Let $(x,y)\in X$; then $y>0$. Let $\gamma:[0,\infty)\to X$ be given by $\gamma(t)=(x,y+t)$. Note that $\gamma$ is arc-length
parametrized with respect to the metric $d$, but not with respect to the metric $d_\rho$. For $t>0$, we see that
\[
\ell_\rho(\gamma\vert_{[0,t]})=\int_0^t \rho(y+s)\, ds
  =\begin{cases} \frac{1}{\beta-1}\left[\frac{1}{y^{\beta-1}}-\frac{1}{(y+t)^{\beta-1}}\right] &\text{ if }y\ge 1,\\
     1-y+\frac{1}{\beta-1}\left[1-\frac{1}{(t+y)^{\beta-1}}\right]&\text{ if }y<1\le y+t,\\
     t &\text{ if }y+t<1. \end{cases}
\]
Moreover, for $t>0$ we have that
\[
\dist_{d_\rho}(\gamma(t),Z\times\{0\})=\int_0^{y+t}\rho(s)\, ds
  =\begin{cases} 1+\frac{1}{\beta-1}\left[1-\frac{1}{(y+t)^{\beta-1}}\right]&\text{ if } y+t\ge 1,\\
    y+t&\text{ if }y+t\le 1. \end{cases}
\]
As $y>0$, it follows from the above two computations that for each $t>0$ we have
$\dist_{d_\rho}(\gamma(t), Z\times\{0\})\ge \ell_\rho(\gamma\vert_{[0,t]})$. Hence, $\widehat{X}$ is a John domain
with John center $\infty$ and John constant $C_J=1$.
\end{proof}

\subsection{Conclusion: the reconciliation}\label{SubSec-reconciliation}

Now we have the tools necessary to compare the two fractional Laplace operators in the case that
$(Z,d_Z,\mu_Z)$ is a compact doubling metric measure space supporting a $2$-Poincar\'e inequality: the first considered in~\cite{EbGKSS} and
corresponding to the operator $\mathcal{A}$ described in Subsection~\ref{subsect:p=2Operator}, and the second
constructed in this paper and denoted $(-\Delta_2)^\theta$ .

The domain $X=Z\times(0,\infty)$ was
considered in~\cite{EbGKSS}. Moreover, for $-1<a<1$ (corresponding to the relationship $\theta=(1-a)/2$),
$X$ is equipped with the weighted product measure $d\mu_X(x,y)=y^a\, d\mu_Z(x)\, dy$.
With such a measure, it is clear to see that the measure $\nu=\mu_Z$ satisfies the co-dimension condition~\eqref{eq:Co-Dim-intro}
with $\Theta=a+1$.

It was shown in~\cite{EbGKSS} that a function $u\in B^\theta_{2,2}(Z)$ satisfies
$\mathcal{A} u=f$ on $Z=\partial X$ if and only if its Cheeger-harmonic extension, also denoted $u$, to $X$ satisfies
$\int_X g_u^2\, d\mu_X<\infty$ \emph{and in addition}, satisfied
\begin{equation}\label{eq:lim-to-bdy}
\lim_{y\to 0^+} y^a\partial_y u=f.
\end{equation}
Here $\mathcal{A}$ on $Z$ was defined via the spectral decomposition theorem, corresponding
to the fractional Laplacian $(\Delta_2)^\theta$ in the Euclidean setting of~\cite{CS}.
Observe that as $(Z,\mu_Z)$ is compact, doubling, and supports a $2$-Poincar\'e inequality, the measure $\mu_X$ is doubling
and supports a $2$-Poincar\'e inequality as well; this was shown in~\cite{EbGKSS}.

Now, when we transform $X=Z\times[0,\infty)$ into $(\widehat{Z}, d_\rho, \mu_\omega)$ as described at the beginning
of Section~\ref{Sec:Reconcile}, we obtain a doubling metric measure space supporting a $2$-Poincar\'e inequality and
the co-dimension condition as outlined in Conditions~(H0), (H1), and~(H2) in the current paper. Furthermore,
with the transformation of the Cheeger differential structure as explained in this section, we also see that
functions on $X$ that were Cheeger $2$-harmonic in $Z\times(0,\infty)$ are also Cheeger $2$-harmonic in
$\Om:=\widehat{X}\setminus (Z\times\{0\})$; moreover, $d_\rho$ is locally isometric to the original metric $d$ near $Z=\partial\Om$,
and the measure $\mu_\omega=\mu_X$ on $Z\times[0,1)$. It follows that the condition $\lim_{y\to 0^+} y^a\partial_y u=f$
is satisfied by $u$ if and only if $u$ satisfies Condition~(c) of our Theorem~\ref{thm:equiv-intro}. Moreover,
the trace of a function on $\Om$ to $\partial\Om$ is the same trace as the one for functions on $Z\times(0,\infty)$ to $Z$.

Thus the correspondence between the construction of~\cite{EbGKSS} and ours is as follows.
\begin{itemize}
\item A function $u$ on $Z$
constructed in~\cite{EbGKSS} to solve the equation $\mathcal{A}u=f$, is extended as a Cheeger $2$-harmonic
function, denoted $\widehat{u}$, to $Z\times(0,\infty)$ so that the trace of $\widehat{u}$
at $\partial (Z\times(0,\infty)=Z$ is $u$. It is shown there that $\widehat{u}$ also satisfies~\eqref{eq:lim-to-bdy}.
\item It was also shown in~\cite{EbGKSS} that $\widehat{u}$ has finite Dirichlet energy in $Z\times(0,\infty)$.
\item The same function $\widehat{u}$ is then Cheeger $2$-harmonic in the transformed
domain $(Z\times(0,\infty), d_\rho, \mu_\omega)$,
as discussed above. Moreover, $\widehat{u}$ has finite Dirichlet energy in this transformed domain.
\item By Lemma~\ref{lem:infty-point} we know that the point $\infty$ is removable for Cheeger $2$-harmonicity
of functions with finite Dirichlet energy. Hence $\widehat{u}$ is Cheeger $2$-harmonic in
$\Om=\widehat{X}\setminus(Z\times\{0\})$.
\item Combining the above-listed points, we see that $\widehat{u}$ satisfies Condition~(c) of
Theorem~\ref{thm:equiv-intro}, and hence satisfies the equation $(-\Delta_2)^\theta u=f$ on $Z$, where
$\theta$ and $a$ are related by $\theta=(1-a)/2=1-\Theta/2$ (with $\Theta=a+1$).
Here, $(-\Delta_2)^\theta$ is as constructed using the bilinear form $\mathcal{E}_T$ as described
in Section~\ref{sec:construct-fractLap} corresponding to $p=2$.
\item Since solutions to both problems exist and are unique up to additive constants, it follows that
the two approaches give the same solution.
\end{itemize}

\section{APPENDIX: Removing boundedness condition on $f$ in~\cite{MS}}\label{Sec:Append}

In this appendix we gather together results, and their proofs, that are adaptations of the results from~\cite{MS} to our setting.
In particular, we replace the boundedness condition on $f$, as required in~\cite{MS}, with the more natural condition
$f\in L^{p'}(\partial\Om)$, where $p'=p/(p-1)$ is the H\"older dual of $p$.

\begin{lem}\label{lem:control-via-data}
Let $u$ be the solution to the Neumann boundary value problem on $\Om$ with boundary data $f\in L^{p'}(\partial\Om)$
such that $\int_\Om u\, d\mu=0$. Then
\[
\int_\Om|u|^p\, d\mu\le C\int_\Om |\nabla u|^p\, d\mu\le C^2\int_{\partial\Om}|f|^{p'}\, d\nu.
\]
The constant $C$ depends only on the structural constants of $\Om$.
\end{lem}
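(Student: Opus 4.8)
The plan is to test the Euler--Lagrange formulation of the Neumann problem against $\phi=u$ itself, which turns the Dirichlet energy into a boundary pairing, and then to close the estimate using the boundedness of the trace operator together with a Poincar\'e inequality for zero-mean functions on $\Om$.

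First I would record the left-hand inequality $\int_\Om|u|^p\,d\mu\le C\int_\Om|\nabla u|^p\,d\mu$. Since $\Om$ is a John domain and $(\overline\Om,d,\mu|_{\overline\Om})$ is doubling and supports a $p$-Poincar\'e inequality, we have $N^{1,p}(\Om)=N^{1,p}(\overline\Om)$, and on the bounded space $\overline\Om$ the weak $(1,p)$-Poincar\'e inequality self-improves to a global $(p,p)$-Poincar\'e inequality (apply the inequality on a ball $B$ containing $\overline\Om$). Using $\mu(\partial\Om)=0$ together with $\int_\Om u\,d\mu=0$, so that $u_{\overline\Om}=0$, this yields $\|u\|_{L^p(\Om)}\le C\,\|g_u\|_{L^p(\Om)}\le C\,\|\nabla u\|_{L^p(\Om)}$, where the last step is the comparability of $g_u$ with $|\nabla u|$ for the Cheeger structure. (Equivalently, one may simply invoke the Sobolev-type inequality \cite[(3.6)]{MS} valid on $N^{1,p}_*(\Om)$.) In particular $\|u\|_{N^{1,p}(\Om)}\le C\,\|\nabla u\|_{L^p(\Om)}$.

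Next I would prove the right-hand inequality. By definition $u$ solves $\int_\Om|\nabla u|^{p-2}\nabla u\cdot\nabla\phi\,d\mu=\int_{\partial\Om}\phi f\,d\nu$ for all $\phi\in N^{1,p}(\overline\Om)$; the choice $\phi=u$ is admissible and gives $\int_\Om|\nabla u|^p\,d\mu=\int_{\partial\Om}u f\,d\nu$. By H\"older's inequality on $\partial\Om$, the boundedness of the trace operator $T:N^{1,p}(\Om)\to B^{1-\Theta/p}_{p,p}(\partial\Om)\subset L^p(\partial\Om)$ from Theorem~\ref{thm:Maly-Trace}, and then the Poincar\'e bound of the previous paragraph,
\[
\int_\Om|\nabla u|^p\,d\mu\le \|u\|_{L^p(\partial\Om)}\,\|f\|_{L^{p'}(\partial\Om)}\le C\,\|u\|_{N^{1,p}(\Om)}\,\|f\|_{L^{p'}(\partial\Om)}\le C\,\|\nabla u\|_{L^p(\Om)}\,\|f\|_{L^{p'}(\partial\Om)}.
\]
Writing $A:=\|\nabla u\|_{L^p(\Om)}$, this reads $A^p\le C\,A\,\|f\|_{L^{p'}(\partial\Om)}$; if $A=0$ there is nothing to prove, and otherwise dividing by $A$ gives $A^{p-1}\le C\,\|f\|_{L^{p'}(\partial\Om)}$, hence $A^p\le C^{p/(p-1)}\,\|f\|_{L^{p'}(\partial\Om)}^{p/(p-1)}=C^{p'}\int_{\partial\Om}|f|^{p'}\,d\nu$. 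Relabeling constants and combining with the first step completes the proof.

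I do not expect a serious obstacle: the only points needing care are that $u=Tu$ genuinely lies in $L^p(\partial\Om)$, which is exactly Theorem~\ref{thm:Maly-Trace} and makes the H\"older step legitimate, and that the zero-mean Poincar\'e inequality is applied on $\overline\Om$ rather than on $\Om$, which is permissible precisely because $N^{1,p}(\Om)=N^{1,p}(\overline\Om)$ under (H0)--(H1) and $\mu(\partial\Om)=0$. One should also note that the existence of such a $u$ with $\int_\Om u\,d\mu=0$ is supplied by the results of~\cite{MS} together with Lemma~\ref{lem:bdd-MalySh}, so the statement is not vacuous.
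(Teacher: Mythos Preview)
Your proof is correct and follows essentially the same approach as the paper's. The only cosmetic difference is that you obtain the key identity $\int_\Om|\nabla u|^p\,d\mu=\int_{\partial\Om}u\,f\,d\nu$ by testing the Euler--Lagrange formulation~(a) with $\phi=u$, whereas the paper instead uses the minimizer formulation~(b), comparing $I(u)\le I(0)=0$ to get $\int_\Om|\nabla u|^p\,d\mu\le p\int_{\partial\Om}u\,f\,d\nu$; from that point on the two arguments are identical (H\"older, trace bound, zero-mean Poincar\'e, divide by $\|\nabla u\|_{L^p(\Om)}$).
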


The above lemma is a consequence of~\cite[Proposition~4.1]{MS} when the Cheeger differential structure is replaced with the
upper gradients in the formulation of the Neumann boundary value problem.

\begin{proof}
Since $\Om$ is bounded and supports a $p$-Poincar\'e inequality, and since $\int_\Om u\, d\mu=0$, we have that
\begin{equation}\label{eq:PI-zeroMean}
\int_\Om|u|^p\, d\mu\le C\int_\Om|\nabla u|^p\, d\mu.
\end{equation}
Next, for each $v\in N^{1,p}(\Om)$ we let $I(v)=\int_\Om|\nabla v|^p\, d\mu-p\int_{\partial\Om}v\, f\, d\nu$. Since $u$ is
a minimizer of $I$ as shown in Theorem~\ref{thm:equiv-intro}, and as $I(0)=0$, it follows that $I(u)\le 0$, that is,
\[
\int_\Om|\nabla u|^p\, d\mu\le p\int_{\partial\Om}u\, f\, d\nu.
\]
Hence by the trace theorem~\cite[Theorem~1.1]{M}, we have
\begin{align*}
\int_\Om|\nabla u|^p\, d\mu\le p\, \Vert u\Vert_{L^p(\partial\Om)}\, \Vert f\Vert_{L^{p'}(\partial\Om)}
   &\le C\left(\Vert u\Vert_{L^p(\Om)}+\Vert |\nabla u|\Vert_{L^p(\Om)}\right)\, \Vert f\Vert_{L^{p'}(\partial\Om)}\\
   &\le C \Vert |\nabla u|\Vert_{L^p(\Om)}\, \Vert f\Vert_{L^{p'}(\partial\Om)},
\end{align*}
where we used~\eqref{eq:PI-zeroMean} in the last line above. It follows that
\[
\Vert |\nabla u|\Vert_{L^p(\Om)}^{p-1}\le C\, \Vert f\Vert_{L^{p'}(\partial\Om)},
\]
from whence the second inequality in the statement of the lemma follows.
\end{proof}

The proof of boundedness of solutions to the Neumann problem as given in~\cite{MS} uses the boundedness of the
Neumann data $f$. In this appendix we show how to modify the proof of~\cite[Theorem~5.2]{MS} when relaxing
the requirement that $f\in L^\infty(\partial\Om)$ to $f\in L^q(\partial\Om)$ for sufficiently large $q>p$. We also
relax the co-dimension $1$ condition to a co-dimension $\Theta$ condition, namely that there is some $0<\Theta<p$
so that for each $x\in\partial\Omega$ and $0<r\le \diam(\partial\Omega)$ we have
\begin{equation}\label{eq:codim-Theta}
\nu(B(x,r))\approx \frac{\mu(B(x,r))}{r^\Theta}.
\end{equation}
Here 
the ball $B(x,r)$ is the ball in $\overline{\Omega}$, with
$\mu(\partial\Omega)=0$ and $\nu(\Omega)=0$. Observe that since $\mu$ is a doubling measure, there is an exponent
$s>0$ such that for all $x\in\overline{\Omega}$ and $0<r<R\le \diam(\Omega)$,
\begin{equation}\label{eq:mu-lower-bdd-exponent}
\left(\frac{r}{R}\right)^s\lesssim\frac{\mu(B(x,r))}{\mu(B(x,R))}.
\end{equation}
Note that we can make $s$ as large as we like; thus, once $\Theta<p$ is fixed, with $1<p<\infty$, we can then  choose
$s>0$ so that $p<s$. Moreover, given the assumptions on $\Omega$, we can replace the original metric with
the biLipschitz equivalent length-metric, and so we can assume that $\Omega$ is a length space (the class
of upper gradients and the Cheeger differential structure are not changed by this); it follows from~\cite[Proposition~4.21]{M}
that given  a choice of $p<\widetilde{p}<p^*:=\tfrac{p(s-\Theta)}{s-p}$, the trace operator
$T:N^{1,p}(\Omega)\to L^{\widetilde{p}}(\partial\Omega)$ is local in the sense that whenever $x\in\partial\Omega$
and $r>0$, we have
\begin{equation}\label{eq:Mal-local-trace}
\Vert Tu-u_{B(x,r)}\Vert_{L^{\widetilde{p}}(B(x,r)\cap\partial\Omega)}
  \lesssim r^{({\widetilde{p}}^{-1}-{p^*}^{-1})(s-\Theta)}\, \Vert \nabla u\Vert_{L^p(B(x,r))}.
\end{equation}
Up to Remark~5.9 of~\cite{MS} holds without any change even if we only assume that $f\in L^{p'}(\partial\Omega)$
where $p'=p/(p-1)$ is the H\"older dual of $p$. The subsequent parts of Section~5 of~\cite{MS} are modified as follows.

Lemma~5.10 of~\cite{MS} is modified to the following. Note that if $f$ is bounded, then the integrability condition of $f$
given in the following lemma holds for any choice of $q$ satisfying the conditions set out in the lemma.
In the lemma below, $\widetilde{p}\in (p,p^*)$ is the exponent associated with the trace operator as in~\eqref{eq:Mal-local-trace},
and $\widetilde{p}^\prime=\widetilde{p}/(\widetilde{p}-1)$ is the H\"older dual of $\widetilde{p}$ and $p'$ the H\"older dual of $p$.

For ease of notation, in the following, integrals over balls $B$ with respect to $\mu$ stand in for
integrals over $B\cap\Om$, and integrals with respect to $\nu$ stand in for integrals over $B\cap\partial\Om$.

\begin{lem}
Suppose that $\int_{\partial\Omega}f\, d\nu=0$ with $f\in L^{p'}(\partial\Omega)$,
and set $t=\tfrac{p'-\widetilde{p}'}{p'-1}$.
Let $u$ be a solution to the Neumann boundary
value problem with boundary data $f$. For $x\in\partial\Omega$, $k,h\in\R$ with $k>h$,
and $0<R/2\le r<R\le \diam(\partial\Omega)$,
set
\[
u(k,r):=\left(\jint_{B(x,r)}(u-k)_+^p\, d\mu\right)^{1/p},\qquad
 \psi(k,r):=\jint_{B(x,r)}|f|(u-k)_+\, d\nu.
\]
Then
\begin{align}\label{eq:level-modified}
u(k,r)
&\le C\left(\frac{u(h,R)}{k-h}\right)^{1-1/\kappa}\left[ \frac{R}{R-r} u(h,R)+R^{1-\Theta/p}\, \psi(h,R)^{1/p}\right],\notag\\
\psi(k,r)
&\le C_{f,R}\left(\frac{\psi(h,R)}{k-h}\right)^{t/\widetilde{p}^\prime}
 \left[\frac{R^{\aleph_1}}{R-r}u(h,R)+R^{\aleph_2}\, \psi(h,R)^{1/p}\right],
\end{align}
where $\kappa=s/(s-p)$ and the constants $\aleph_1,\aleph_2$ are positive and independent of $f$, $u$, $x$, $r$, and $R$,
while $C_{f,R}>1$ is independent of $u$, $x$, and $r$.
\end{lem}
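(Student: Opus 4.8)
The plan is to adapt the standard De~Giorgi iteration step to the Neumann setting, following the structure of the corresponding lemma in~\cite{MS}, but splitting the boundary term into a part controlled by the trace inequality~\eqref{eq:Mal-local-trace} and a part controlled by the integrability of $f$. First I would fix $x\in\partial\Omega$ and $0<R/2\le r<R\le\diam(\partial\Omega)$, choose a Lipschitz cutoff $\chi$ with $\chi=1$ on $B(x,r)$, $\chi=0$ outside $B(x,R)$, and $|\nabla\chi|\lesssim (R-r)^{-1}$, and use $\phi=\chi^p(u-k)_+$ as a test function in the weak formulation of the Neumann problem (Theorem~\ref{thm:equiv-intro}(a)). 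The Caccioppoli-type estimate that results reads, after the usual Young's inequality absorption,
\begin{equation*}
\int_{B(x,r)}|\nabla(u-k)_+|^p\,d\mu
\lesssim \frac{1}{(R-r)^p}\int_{B(x,R)}(u-k)_+^p\,d\mu
+\int_{B(x,R)\cap\partial\Omega}|f|(u-k)_+\,d\nu,
\end{equation*}
so the energy on the super-level set is controlled by $(R-r)^{-p}\mu(B(x,R))\,u(h,R)^p$ plus $\mu(B(x,R))\,r^{-\Theta}\psi(h,R)$ after normalizing by the measure and using the codimension relation~\eqref{eq:codim-Theta}; here I have already replaced $k$ by $h<k$ on the right, which only enlarges things.

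Next I would feed this into the Sobolev inequality on $\overline\Omega$: since $\mu$ has lower mass bound exponent $s>p$, one has the $(p,p)$--Sobolev–Poincar\'e inequality with gain exponent $\kappa=s/(s-p)$, giving
\begin{equation*}
\left(\jint_{B(x,R)}(u-k)_+^{p\kappa}\,d\mu\right)^{1/(p\kappa)}
\lesssim \frac{R}{R-r}\,u(h,R)+R^{1-\Theta/p}\,\psi(h,R)^{1/p}.
\end{equation*}
Then the elementary Chebyshev/level-set argument converts the $L^{p\kappa}$ bound into the claimed $L^p$ bound with the decay factor $\big(u(h,R)/(k-h)\big)^{1-1/\kappa}$: on $B(x,r)$ one writes $u(k,r)^p=\jint (u-k)_+^p \le \mu(\{u>k\}\cap B(x,r))^{1-1/\kappa}\big(\jint(u-k)_+^{p\kappa}\big)^{1/\kappa}$ and estimates $\mu(\{u>k\})\le (k-h)^{-p}\int(u-h)_+^p$. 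This yields the first inequality of~\eqref{eq:level-modified}.

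For the second inequality — the recursion for $\psi$ itself — the idea is to estimate $\psi(k,r)=\jint_{B(x,r)}|f|(u-k)_+\,d\nu$ by H\"older on $\partial\Omega$ with exponents $\widetilde p{}'$ and $\widetilde p$, bounding $\|f\|_{L^{\widetilde p{}'}(B(x,R)\cap\partial\Omega)}$ by interpolation between $L^{p'}$ and $L^1$ (this is where $t=(p'-\widetilde p{}')/(p'-1)$ and the decay exponent $t/\widetilde p{}'$ in $\big(\psi(h,R)/(k-h)\big)^{t/\widetilde p{}'}$ come from — the $L^1$ mass of $f$ on the super-level set is bounded via $\psi(h,R)/(k-h)$, and the remaining $L^{p'}$ factor on $B(x,R)$ is the source of the constant $C_{f,R}$), and then bounding $\|(u-k)_+\|_{L^{\widetilde p}(B(x,R)\cap\partial\Omega)}$ by the local trace estimate~\eqref{eq:Mal-local-trace} plus a term $\nu(B(x,R))^{1/\widetilde p}|u_{B(x,R)}-k|$. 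The trace term produces $R^{(\widetilde p^{-1}-{p^*}^{-1})(s-\Theta)}\|\nabla u\|_{L^p(B(x,R))}$, and the Caccioppoli estimate above bounds $\|\nabla u\|_{L^p(B(x,R))}$ by $(R-r)^{-1}$ times $u(h,R)$-type quantities plus $\psi(h,R)^{1/p}$-type quantities; collecting powers of $R$ and relabeling the resulting exponents as $\aleph_1,\aleph_2>0$ gives the stated form. The main obstacle I anticipate is exactly this bookkeeping in the second inequality: one must track the several competing powers of $R$ coming from $\nu(B(x,R))\approx\mu(B(x,R))R^{-\Theta}$, from the trace gain exponent $(\widetilde p^{-1}-{p^*}^{-1})(s-\Theta)$, and from the Sobolev normalization, and verify they combine into \emph{positive} exponents $\aleph_1,\aleph_2$ so that the iteration closes; and one must be careful that the $L^1$-versus-$L^{p'}$ interpolation of $f$ is set up so that all the $f$-dependence is isolated into $C_{f,R}$ and the decay factor, with no residual $f$-norm multiplying a non-decaying term. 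The rest (choice of cutoff, Young's inequality constants, the elementary level-set lemma) is routine.
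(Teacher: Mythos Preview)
Your proposal is correct and matches the paper's approach: the paper also combines the De~Giorgi/Caccioppoli inequality (quoted from~\cite[Theorem~5.3]{MS} rather than rederived) with the $(\kappa p,p)$-Sobolev--Poincar\'e inequality and a Chebyshev level-set bound for the first estimate, and H\"older's inequality, the local trace~\eqref{eq:Mal-local-trace}, and the $L^1$--$L^{p'}$ interpolation of $f\chi_{\{u>k\}}$ (with the $L^{p'}$ factor absorbed into $C_{f,R}$) for the second. One minor correction: apply the trace inequality to $(u-k)_+$ on $B(x,r)$, so that the mean term is $\bigl((u-k)_+\bigr)_{B(x,r)}\le u(k,r)\lesssim u(k,R)$ rather than $|u_{B}-k|$, and so that the Caccioppoli step passes cleanly from the gradient on $B(x,r)$ to data on $B(x,R)$.
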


The choice of $\kappa=s/(s-p)$ comes from the Sobolev-Poincar\'e inequality on $\overline{\Omega}$,
namely, $(\kappa p,p)$-Poincar\'e inequality; see~\cite{HajK}. We remind the readers that the parameter $s$ is the
lower mass bound dimension of $\mu$ from~\eqref{eq:mu-lower-bdd-exponent}.

\begin{proof}
As in the proof of~\cite[Lemma~5.10]{MS}, we pick a cut-off function $\eta$ that is Lipschitz on $\overline{\Omega}$,
with $\eta=1$ on $B(x,r)$, $\eta=0$ on $\overline{\Omega}\setminus B(x,(R+r)/2)$ and apply
H\"older's inequality and the $(\kappa p,p)$-Poincar\'e
inequality to $\eta \, (u-k)_+$ to obtain
\[
\jint_{B(x,r)}(u-k)_+^p\, d\mu\lesssim \left(\frac{\mu(A(k,r))}{\mu(B(x,r))}\right)^{1-1/\kappa}\, R^p
\jint_{B(x,(R+r)/2)}|\nabla (\eta (u-k)_+)|^p\, d\mu.
\]
Here $A(k,r)=\{y\in\overline{\Omega}\, :\, u(y)>k\}\cap B(x,r)$.
Invoking the Leibniz rule (with $|\nabla \eta|\le 2(R-r)^{-1}\chi_{B(x,(R+r)/2)\setminus B(x,r)}$) and
the version of De Giorgi-type inequality from~\cite[Theorem~5.3]{MS}, we now obtain
\[
 u(k,r)^p\lesssim  \left(\frac{\mu(A(k,r))}{\mu(B(x,r))}\right)^{1-1/\kappa}\,
\left[\frac{R^p}{(R-r)^p}\jint_{B(x,R)}(u-k)_+^p\, d\mu+\frac{1}{\mu(B(x,R))}\int_{B(x,R)}|f|(u-k)_+\, d\nu\right].
\]
Now applying the codimensionality~\eqref{eq:codim-Theta}, we obtain
\[
u(k,r)^p\lesssim  \left(\frac{\mu(A(k,r))}{\mu(B(x,r))}\right)^{1-1/\kappa}\,
\left[\frac{R^p}{(R-r)^p}u(k,R)^p+R^{p-\Theta}\jint_{B(x,R)}|f|(u-k)_+\, d\nu\right].
\]
Now, as in~\cite[bottom of page~2446]{MS}, we  see that when $h<k$,
\[
\mu(A(k,r))\lesssim\frac{\mu(B(x,r))}{(k-h)^p}\, u(h,R)^p.
\]
Combining the above two inequalities proves the first of the two inequalities in~\eqref{eq:level-modified} (note that
we have used the doubling property of $\mu$ as well as the facts that $R/2\le r<R$ and $u(k,R)\le u(h,R)$,
$\psi(k,R)\le \psi(h,R)$ here.

Now we turn our attention to $\psi(k,r)$ where the bulk of the modification lies.
We set $\aleph_0:=(\widetilde{p}^{-1}-{p^*}^{-1})(s-\Theta)$.
By H\"older's inequality and~\eqref{eq:Mal-local-trace}, H\"older's inequality again, and then by the definition of $u(k,r)$,
\begin{align*}
\psi(k,r)&\le \left(\jint_{B(x,r)}(u-k)_+^{\widetilde{p}}\, d\nu\right)^{1/\widetilde{p}}
   \left(\jint_{B(x,r)}|f\chi_{A(k,r)}|^{\widetilde{p}'}\, d\nu\right)^{1/\widetilde{p}'}\\
&\lesssim\nu(B(x,r))^{-1/\widetilde{p}}\left(\jint_{B(x,r)}|f\chi_{A(k,r)}|^{\widetilde{p}'}\, d\nu\right)^{1/\widetilde{p}'}\, \times\\
 &\hskip 1cm  \left[r^{\aleph_0}\left(\int_{B(x,r)}|\nabla (u-k)_+|^p\, d\mu\right)^{1/p}
  +\nu(B(x,r))^{1/\widetilde{p}}\jint_{B(x,r)}(u-k)_+\, d\mu\right]\\
&\lesssim \left(\jint_{B(x,r)}|f\chi_{A(k,r)}|^{\widetilde{p}'}\, d\nu\right)^{1/\widetilde{p}'}
\left[\frac{r^{\aleph_0}}{\nu(B(x,r))^{1/\widetilde{p}}}\left(\int_{B(x,r)}|\nabla (u-k)_+|^p\, d\mu\right)^{1/p}
   +u(k,r)\right].
\end{align*}
Combining this inequality with the De Giorgi-type inequality from~\cite[Theorem~5.3]{MS} yields
\begin{align*}
\psi(k,r)\lesssim  \left(\jint_{B(x,r)}|f\chi_{A(k,r)}|^{\widetilde{p}'}\, d\nu\right)^{1/\widetilde{p}'}
  \bigg[u(k,r)+&\frac{r^{\aleph_0}}{R-r}\frac{\mu(B(x,R))^{1/p}}{\nu(B(x,r))^{1/\widetilde{p}}}u(k,R)\\
&  +r^{\aleph_0}\nu(B(x,R))^{1/p-1/\widetilde{p}}\psi(k,R)^{1/p}\bigg].
\end{align*}
Here we have also used the facts that $nu$ is doubling and $R/2\le r<R$. Now using the co-dimensionality of $\nu$ with
respect to $\mu$, we obtain
\begin{align*}
\psi(k,r)\lesssim  \left(\jint_{B(x,r)}|f\chi_{A(k,r)}|^{\widetilde{p}'}\, d\nu\right)^{1/\widetilde{p}'}
  \bigg[u(k,r)+&\frac{R^{\aleph_0+\Theta/p}}{R-r}\nu(B(x,R))^{1/p-1/\widetilde{p}}u(k,R)\\
    &+r^{\aleph_0}\nu(B(x,R))^{1/p-1/\widetilde{p}}\psi(k,R)^{1/p}\bigg].
\end{align*}
As $\widetilde{p}>p$ and $\nu(\partial\Omega)<\infty$, we have
\begin{equation}\label{ineq:psi}
\psi(k,r)\lesssim \left(\jint_{B(x,r)}|f\chi_{A(k,r)}|^{\widetilde{p}'}\, d\nu\right)^{1/\widetilde{p}'}
   \left[\left(1+\frac{R^{\aleph_0+\Theta/p}}{R-r}\right)u(k,R)+r^{\aleph_0}\psi(k,R)^{1/p}\right].
\end{equation}
Note that by assumption, $f\in L^{p'}(\partial\Omega)$. As $\widetilde{p}>p$, it follows that
$f\in L^{\widetilde{p}'}(\partial\Omega)$.
We set $t=\tfrac{p'-\widetilde{p}'}{p'-1}$ as in the statement of the lemma. 
As $p<\widetilde{p}$ and $p'>1$, we have that $t>0$.
Moreover, as $\widetilde{p}'>1$, it follows that $t<1$. Now by H\"older's inequality
and the fact that $(\widetilde{p}'-t)/(1-t)=p'$,
\begin{align*}
\jint_{B(x,r)}|f\chi_{A(k,r)}|^{\widetilde{p}'}\, d\nu &\le \left(\jint_{B(x,r)}|f|\chi_{A(k,r)}\, d\nu\right)^t\,
  \left(\jint_{B(x,r)}|f|^{(\widetilde{p}'-t)/(1-t)}\, d\nu\right)^{1-t}\\
    &=\left(\jint_{B(x,r)}|f|\chi_{A(k,r)}\, d\nu\right)^t\, \left(\jint_{B(x,r)}|f|^{p'}\, d\nu\right)^{(\widetilde{p}'-1)/(p'-1)}\\
    &\le C\, \left(\jint_{B(x,r)}|f|\chi_{A(k,r)}\, d\nu\right)^t\, \left(\jint_{B(x,R)}|f|^{p'}\, d\nu\right)^{(\widetilde{p}'-1)/(p'-1)}.
\end{align*}
We set
\[
C_{f,R}:=C\, \left(\jint_{B(x,R)}|f|^{p'}\, d\nu\right)^{(\widetilde{p}'-1)/(p'-1)}
\]
Then by the above,
\[
\jint_{B(x,r)}|f\chi_{A(k,r)}|^{\widetilde{p}'}\, d\nu\le C_{f,R}\left(\jint_{B(x,r)}|f|\chi_{A(k,r)}\, d\nu\right)^t.
\]

When $h,k\in\R$ with $h<k$, we have
\[
\jint_{B(x,r)}|f|\chi_{A(k,r)}\, d\nu\le \frac{1}{k-h}\jint_{B(x,r)}|f|(u-h)_+\, d\nu=\frac{\psi(h,r)}{k-h}
   \le \frac{\psi(h,R)}{k-h}.
\]
Combining this with inequality~\eqref{ineq:psi}, we obtain
\[
\psi(k,r)\lesssim C_{f,R}^{1/\widetilde{p}'}\left(\frac{\psi(h,R)}{k-h}\right)^{t/\widetilde{p}'}  
  \left[\left(1+\frac{R^{\aleph_0+\Theta/p}}{R-r}\right)u(k,R)+R^{\aleph_0}\psi(k,R)^{1/p}\right].
\]
As $R\le \diam(\partial\Omega)<\infty$ and $R>R-r$, we obtain the second part of~\eqref{eq:level-modified}
by choosing $\aleph_2=\aleph_0$ and $\aleph_1=\aleph_0+\Theta/p$.
\end{proof}

Now we are ready to prove the boundedness of $u$ on $B(x,R)$ even when $f$ is not bounded. To do so, fix $k_0\in\R$.
We fix $d>0$ for now, but we will add some conditions in on $d$ towards the end.
For non-negative integers $n$ we set
\[
r_n=\frac{R}{2}(1+2^{-n}),\qquad k_n=k_0+d(1-2^{-n}).
\]
Note that $0<k_{n+1}-k_n=2^{-(n+1)}d$ and $0<r_n-r_{n+1}=2^{-(n+2)}R$. Then from the above lemma, we see
that
\[
u(k_{n+1},r_{n+1})\le C\left(\frac{u(k_n,r_n)}{2^{-(n+1)}d}\right)^{1-1/\kappa}
  \left[2^{n+2}u(k_n,r_n)+R^{1-\Theta/p}\psi(k_n,r_n)^{1/p}\right].
\]
Setting $\alpha=1-1/\kappa<1$ and noting that $R\le \diam(\partial\Omega)<\infty$, we obtain
\[
u(k_{n+1},r_{n+1})\le C\frac{2^{(n+1)\alpha}}{d^\alpha}\left[2^{n+2}u(k_n,r_n)^{1+\alpha}
   +u(k_n,r_n)^\alpha\psi(k_n,r_n)^{1/p}\right],
\]
and as $2^{n+2}\ge 1$, we obtain
\begin{equation}\label{eq:star1}
u(k_{n+1},r_{n+1})\le C\frac{2^{n(\alpha+1)}}{d^\alpha}
   \left[u(k_n,r_n)^{1+\alpha}+u(k_n,r_n)^\alpha\psi(k_n,r_n)^{1/p}\right].
\end{equation}
Similarly, setting $\beta=t/\widetilde{p}'<1$, we have from the above lemma that
\begin{equation}\label{eq:star2}
\psi(k_{n+1},r_{n+1})\le C_{f,R}\frac{2^{n(\beta+1)}}{d^\beta}
  \left[u(k_n,r_n)\psi(k_n,r_n)^\beta+\psi(k_n,r_n)^{\beta+1/p}\right].
\end{equation}

\begin{lem}\label{lem:bdd-MalySh}
There exist a choice of positive real numbers $d$, $\sigma$, and $\tau$ such that
with $k_n$, $r_n$ as defined above for each non-negative integer $n$, we have
\[
u(k_n,r_n)\le 2^{-\sigma n}u(k_0,R),\qquad \psi(k_n,r_n)\le 2^{-\tau n}\psi(k_0,R).
\]
\end{lem}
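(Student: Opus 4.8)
The plan is to prove the two bounds simultaneously by induction on $n$, deciding on $\sigma$, $\tau$ and then on $d$ only at the end. Write $A:=u(k_0,R)$ and $B:=\psi(k_0,R)$. The base case $n=0$ holds with equality, since $r_0=R$ and $k_0=k_0$ give $u(k_0,r_0)=A$ and $\psi(k_0,r_0)=B$. For the inductive step, assume $u(k_n,r_n)\le 2^{-\sigma n}A$ and $\psi(k_n,r_n)\le 2^{-\tau n}B$, and feed these into the recursive inequalities~\eqref{eq:star1} and~\eqref{eq:star2} (legitimate at every $n$ because $r_n/2\le r_{n+1}<r_n\le\diam(\partial\Omega)$). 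Each right-hand side becomes a sum of two terms, each of the shape (absolute constant)$\cdot d^{-\alpha}$ or (absolute constant)$\cdot d^{-\beta}$, times a power of $2$ whose exponent is linear in $n$, times fixed powers of $A$ and $B$. To close the induction I would require, for each of the four terms, that (i) the exponent of $2$, after subtracting off the target exponent $-\sigma(n+1)$ (resp. $-\tau(n+1)$), is nonpositive, so the factor $2^{(\cdots)n}$ stays $\le 1$; and (ii) the remaining leading constant is $\le\tfrac12$, so the two halves sum to at most the target.

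Condition (i) amounts to a finite system of linear inequalities in $\sigma,\tau$. Writing it out with $\alpha=1-1/\kappa$, the self term $u^{1+\alpha}$ in~\eqref{eq:star1} asks for $\sigma\ge 1+1/\alpha$, the cross term $u^{\alpha}\psi^{1/p}$ asks for $\tau\ge p\bigl[\sigma(1-\alpha)+\alpha+1\bigr]$; in~\eqref{eq:star2} the cross term $u\,\psi^{\beta}$ asks for $\sigma\ge\tau(1-\beta)+\beta+1$, and the self term $\psi^{\beta+1/p}$ asks for $\tau\ge(\beta+1)/(\beta+\tfrac{1}{p}-1)$. Once $\sigma,\tau$ are fixed meeting these (they depend only on $\alpha,\beta,p$, hence on the structural constants), condition (ii) is arranged by taking $d$ large: large enough, in terms of $A$, $B$, the constants $C$ and $C_{f,R}$ from the preceding lemma, and $\sigma,\tau,\alpha,\beta,p$, that every leading constant falls below $\tfrac12$. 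If $A=0$ then $(u-k_n)_+\le(u-k_0)_+$ forces $u(k_n,r_n)=0$ and hence $B=0$, so the statement is vacuous; if $A>0$ but $B=0$ the $\psi$-recursion is trivially satisfied and only the first bound needs the argument; so one may assume $A,B>0$.

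The main obstacle is the coupling of the two recursions. The cross term $u^{\alpha}\psi^{1/p}$ in~\eqref{eq:star1} pushes $\tau$ up relative to $\sigma$, while the cross term $u\,\psi^{\beta}$ in~\eqref{eq:star2} pushes $\sigma$ up relative to $\tau$; composing these two constraints, a consistent pair $(\sigma,\tau)$ with $\sigma$ and $\tau$ arbitrarily large exists exactly when $p(1-\alpha)(1-\beta)<1$. Moreover the self term $\psi^{\beta+1/p}$ can be made to decay at all only when $\beta+\tfrac{1}{p}>1$. Both are statements about the exponents $\alpha=p/s$ and $\beta=t/\widetilde p'$, and the way I would secure them is by using the same freedom already invoked above, namely the latitude in the choice of the lower-mass-bound exponent $s$ and of $\widetilde p\in(p,p^{*})$: shrinking $1-\alpha$ and $1-\beta$ (equivalently, taking $\widetilde p$ close to $p^{*}$) drives $p(1-\alpha)(1-\beta)$ below $1$ and $\beta+\tfrac{1}{p}$ above $1$. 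With these structural choices in place, the linear system for $(\sigma,\tau)$ is solvable, $d$ is then chosen as above, and the two bounds follow from the straightforward induction.
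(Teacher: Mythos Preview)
Your argument is essentially the paper's: the same induction, the same four linear conditions on $(\sigma,\tau)$ extracted from \eqref{eq:star1}--\eqref{eq:star2}, and then the choice of $d$ large at the end, with the trivial cases $A=0$, $B=0$ handled first. Your structural analysis of the obstruction---solvability of the system requires $\beta+\tfrac1p>1$ and $p(1-\alpha)(1-\beta)<1$---is correct and goes slightly beyond the paper, which simply cites \cite[p.~2450]{MS} for the solvability.

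One imprecision in your last paragraph: you cannot in general ``shrink $1-\alpha$'' by parameter choice. Since $\alpha=p/s$, making $1-\alpha=(s-p)/s$ small would require taking $s$ close to $p$, but $s$ is bounded below by the intrinsic lower-mass-bound dimension of $\mu$; the only freedom the paper invokes is to \emph{increase} $s$, which moves $1-\alpha$ the wrong way. What you \emph{can} do is shrink $1-\beta$ alone, since $1-\beta=p/\widetilde p$ and $\widetilde p$ ranges over $(p,p^*)$; so the two conditions reduce to $\widetilde p>p^2$, hence to the structural requirement $p^*>p^2$. The paper does not revisit this point in the appendix and instead refers to the standing setup in~\cite{MS}; your sketch should likewise defer to that rather than claiming both $\alpha$ and $\beta$ are freely adjustable.
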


Note that with $d,\sigma$, and $\tau$ as in the above lemma, we obtain that
\[
0=\lim_{n\to\infty}u(k_n,r_n)=u(k_0+d,R/2),
\]
form whence we can conclude that $u\le k_0+d$ on $B(x,R/2)$.
Hence, to prove the boundedness of $u$ on $B(x,R/2)$ it suffices to prove the above lemma.

\begin{proof}
If $u(k_0,R)=0$, then $u\le k_0$ on $B(x,R)$, and we have the boundedness. Therefore, without loss of generality, we
assume that $u(k_0,R)>0$. If $\psi(k_0,R)=0$, then again for each $n$ we have $\psi(k_n,r_n)=0$, and the
required inequality for $\psi$ would be satisfied, and then we can directly focus on the part of the
proof below that is relevant to $u(k_n,r_n)$. Hence we now assume also that $\psi(k_0,R)>0$, and prove the claim
via induction on $n$. The base case $n=0$ holds trivially. So suppose that $n$ is a non-negative integer such that the
claims hold for $n$; and we wish to then show that the claims also hold for $n+1$.

By~\eqref{eq:star2} and by the hypotheses holding for $n$,
\begin{align*}
\psi(k_{n+1},r_{n+1})&\le C\frac{2^{n(\beta+1)}}{d^\beta}\left[2^{-(\tau\beta+\sigma)n}u(k_0,R)\psi(k_0,R)^\beta
 +2^{-\tau n(\beta+1/p)}\psi(k_0,R)^{\beta+1/p}\right]\\
 &\le \frac{C}{d^\beta}\left[2^{-n(\sigma+\tau\beta-\beta-1)}\psi(k_0,R)^\beta u(k_0,R)
  +2^{-n(\tau\beta+\tau/p-\beta-1)}\psi(k_0,R)^{\beta+1/p}\right]\\
&\le 2^{-\tau(n+1)}\psi(k_0,R)\, \frac{2^\tau C}{d^\beta}
   \bigg[2^{-n(\sigma+\tau\beta-\beta-1-\tau)}\frac{u(k_0,R)}{\psi(k_0,R)^{1-\beta}}\\
& \hskip 5cm + 2^{-n(\tau\beta+\tau/p-\beta-1-\tau)}\psi(k_0,R)^{\beta+1/p-1}\bigg].
\end{align*}
Similarly, by~\eqref{eq:star1} and by the hypothesis,
\begin{align*}
u(k_{n+1},r_{n+1})\le 2^{-\sigma(n+1)}u(k_0,R)\frac{C}{d^\beta}
 \bigg[2^{-n(\sigma+\sigma\alpha-\alpha-1-\sigma)}&u(k_0,R)^\alpha\\
   &+2^{-n(\sigma\alpha+\tau/p-\alpha-1-\sigma)}\frac{\psi(k_0,R)^{1/p}}{u(k_0,R)^{1-\alpha}}\bigg].
\end{align*}
We choose $\sigma$ and $\tau$ such that
\begin{align*}
\sigma+\sigma\alpha-\alpha-1-\sigma\ge0,\qquad & \sigma\alpha+\frac{\tau}{p}-\alpha-1-\sigma\ge0, \\
\sigma+\tau\beta-\beta-1-\tau\ge 0, \qquad & \tau\beta+\frac{\tau}{p}-\beta-1-\tau\ge 0.
\end{align*}
Such a choice is possible because $\beta=t/\widetilde{p}'<1<p'=p/(p-1)$, see the conditions
listed at the top of~\cite[page~2450]{MS}.
Subsequently, we choose $d$ large enough so that
\[
d^\beta\ge C\left[u(k_0,R)^\alpha+\frac{u(k_0,R)}{\psi(k_0,R)^{1-\beta}}+\psi(k_0,R)^{\beta+1/p-1}
  \frac{\psi(k_0,R)^{1/p}}{u(k_0,R)^{1-\alpha}}\right]
\]
to complete the proof of the lemma.
\end{proof}

\noindent {\bf Addresses:} \\

\noindent L.C.: ~Department of Mathematics and Statistics, Smith College, Northampton, MA~01063, U.S.A. \
E-mail: {\tt lcapogna@smith.edu}\\
\\
\noindent J.K.:~Department of Mathematical Sciences, P.O.~Box 210025, University of Cincinnati, Cincinnati, OH~45221-0025, U.S.A.\
E-mail: {\tt klinejp@mail.uc.edu}\\
\\
\noindent R.K.:~Aalto University, Department of Mathematics and Systems Analysis, P.O.~Box~11100, FI-00076 Aalto, Finland.\
E-mail: {\tt riikka.korte@aalto.fi}\\
\\
\noindent N.S.:~Department of Mathematical Sciences, P.O.~Box 210025, University of Cincinnati, Cincinnati, OH~45221-0025, U.S.A.\
E-mail: {\tt shanmun@uc.edu}\\
\\
\noindent M.S.:~Department of Mathematics and Statistics, Kenyon College, 201 N. College Dr., Gambier, OH~43022, U.S.A.\
E-mail: {\tt snipesm@kenyon.edu}

\end{document}